\documentclass{article}
\usepackage{amsmath,amsthm,indentfirst}
\usepackage{amssymb}
\usepackage{amsfonts}
\usepackage{amscd}
\usepackage[latin1]{inputenc}

\theoremstyle{plain}
\newtheorem*{maintheorem*}{Main Theorem}
\newtheorem*{thm*}{Theorem}
\newtheorem*{thma*}{Theorem A}
\newtheorem*{thmaa*}{Theorem A'}
\newtheorem*{thmb*}{Theorem B}
\newtheorem*{thmo*}{Theorem 1.1}
\newtheorem*{thmc*}{Theorem C}
\newtheorem*{thmd*}{Theorem D}
\newtheorem*{thmf*}{Theorem 4.1}
\newtheorem*{conjecture*}{Conjecture}
\newtheorem*{prop*}{Proposition}
\newtheorem{thm}{Theorem}[section]
\newtheorem{cor}[thm]{Corollary}
\newtheorem{lem}[thm]{Lemma}
\newtheorem{prop}[thm]{Proposition}

\theoremstyle{definition}

\newtheorem*{proofc*}{Proof of Theorem C}

\newcommand{\lsup}[2]{%
        \ensuremath{{}^{#2}\!{#1}}}

\newtheorem{remark}[thm]{Remark}

\def\bbz{\mathbb{Z}}
\def\bbq{\mathbb{Q}}
\def\bbf{\mathbb{F}}
\def\bbr{\mathbb{R}}
\def\bba{\mathbb{A}}
\def\bbc{\mathbb{C}}
\def\bbh{\mathbb{H}}
\def\bbn{\mathbb{N}}
\def\bbg{\mathbb{G}}
\def\bbp{\mathbb{P}}
\def\bbt{\mathbb{T}}
\def\bbb{\mathbb{B}}
\def\bbu{\mathbb{U}}
\def\bbl{\mathbb{L}}
\def\bg{\bar{\mathbb{G}}}

\def\hfr{\mathfrak{h}}

\def\tfr{\mathfrak{t}}
\def\gl{\mathfrak{gl}}
\def\sl{\mathfrak{sl}}
\def\min{{\rm min}}
\def\Ad{{\rm Ad}}

\def\dim{{\rm dim\,}}
\def\char{{\rm char}}
\def\Hom{{\rm Hom}}
\def\vol{{\rm vol}}
\def\Lie{{\rm Lie}}
\def\diag{{\rm diag}}
\def\deg{{\rm deg}}
\def\comm{{\rm comm}}
\def\SL{{\rm SL}}
\def\o{{\cal O}}

\def\A{{\rm A}}
\def\B{{\rm B}}
\def\C{{\rm C}}
\def\D{{\rm D}}
\def\E{{\rm E}}
\def\F{{\rm F}}
\def\G{{\rm G}}

\title{Lattices of minimum covolume in Chevalley groups over local fields of positive characteristic  }
\author{Alireza Salehi Golsefidy}
\date{12/13/2007}
\begin{document}
\maketitle
\begin{abstract}
In this article, we show that  if $\bbg$ is a simply connected Chevalley group of either classical type of rank bigger than $1$ or type $\E_6$, and $q>9$ is a power of a prime number $p>5$,  then $G=\bbg(\bbf_q((t^{-1})))$, up to an automorphism, has a unique lattice of minimum covolume, which is $\bbg(\bbf_q[t])$.  \footnote{ {\em Key words and phrases.}
positive characteristic, local field, lattice, Chevalley group.

{\em 2000 Mathematics Subject Classification} 22E40, 11E57}
\end{abstract}
\section{Introduction and the statement of results}
  In the 40s, Siegel~\cite{Si} showed that  the (2,3,7)-triangle group is a lattice of minimum covolume in ${\rm SL}_2(\bbr)$. Later, Kazhdan and Margulis~\cite{KM} showed that if $G$ is a connected
  semisimple Lie group without compact factors, then there is a neighborhood of
  identity which does not intersect a conjugate of any given lattice. Combining
  this with a theorem of Chabauty~\cite{Ch}, one can get that there is a lattice which minimizes the covolume in any connected semisimple Lie group without compact factors. However from this proof the minimum covolume
  may not be determined directly. In the mid 80s, for $G={\rm SL}_2(\bbc)$,
  Meyerhoff~\cite{Me} showed that among the non-uniform lattices the minimum
  covolume is attained for ${\rm SL}_2(\o)$ where $\o$ is the ring of integers
  of $\bbq(\sqrt{-3})$. But there are co-compact lattices with smaller covolume,
  and the minimum has been just recently found by Martin and Gehring. In 1990, Lubotzky~\cite{Lu} studied the non-Archimedean analogue
  of the problem. He found the minimum covolume of lattices in $G={\rm SL}_2(\bbf_q((t^{-1})))$.
  Surprisingly, he showed that for odd primes the minimum occurs just for non-uniform
  lattices. Moreover, he showed that the minimum is attained for ${\rm SL}_2(\bbf_q[t])$.
   He also described fundamental domain of lattices in $G$ in the Bruhat-Tits
  tree. In particular, he proved that if $\Gamma$ is a lattice of minimum covolume, then
  its fundamental domain in the Bruhat-Tits tree should be a geodesic ray. In
  1999, Lubotzky and Weigel~\cite{LW} studied the characteristic zero case. They
  again used the action of lattices on the Bruhat-Tits tree. In this case by a classical
  result of Tamagawa all the lattices are co-compact. They proved that if the characteristic and the number of the elements of the residue field are large enough, then a lattice of minimum covolume  acts transitively on the vertices of the same color in the Bruhat-Tits tree. However in contrast to our intuition, there are lattices of minimum covolume which have more than one point of the same color in their fundamental domain.

  In this article  we will describe the possible structures of a lattice of minimum covolume in $G=\bbg(\bbf_q((t^{-1})))$ where $\bbg$ is a simply connected Chevalley group. Before stating the main results of this paper, let us fix a few notations. Let $F=\bbf_q((t^{-1}))$ and $\o=\bbf_q[[t^{-1}]]$. Let $\xi_l$ be the reduction map modulo $t^{-l}$ from $\bbg(\o)$ to $\bbg(\o/t^{-l}\o)$. Its kernel $G_l$ is called the $l^{th}$ congruence subgroup. Let $\Omega$ be the space of lattices in $G$, and $\Omega_l$ the subset of $\Omega$ consisting of lattices which intersect $G_l$ trivially. 
  
  \begin{maintheorem*}
  A lattice of minimum covolume in $G=\bbg(F)$, up to an automorphism of $G$, is $\bbg(\bbf_q[t])$ if $\bbg$ is a simply connected Chevalley group of either classical type of rank bigger than 1 or type $\E_6$, and $q$ is a power of a prime number $p>5$ which is larger than $9$ when $\bbg$ is not of type $\A$. 
  \end{maintheorem*}  
  
 To prove the main theorem, at the first step, we consider the action of the group of automorphisms of $G$ on $\Omega$. The result of Kazhdan-Margulis implies that there is a constant $\lambda$ depending just on $G$, such that any ${\rm Inn}(G)$-orbit in $\Omega$ intersects $\Omega_{\lambda}$. Our first result gives a quantitative version of this theorem.

  \begin{thma*}\label{KM2}
  Let $G=\bbg(\bbf_q((t^{-1})))$ where $\bbg$ is a simply connected Chevalley group.
  Let $l(\bbg)$ be the maximum coefficient of the simple roots in the highest root; then any $\Ad(\bbg)(F)$-orbit in $\Omega$ intersects $\Omega_{l(\bbg)}$. 
  \end{thma*}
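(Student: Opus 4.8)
The plan is to translate the statement into the geometry of the Bruhat--Tits building $X$ of $G$, on which $\Ad(\bbg)(F)$ acts, and to use that $\bbg(\o)$ is the stabilizer of a hyperspecial vertex $x_0$ while $G_l$ can be identified with the pointwise stabilizer of the ball $B(x_0,l)$ (this is the congruence = Moy--Prasad filtration at a hyperspecial vertex, as one checks already for $\mathrm{SL}_2$). Since a conjugate $gG_lg^{-1}$ is the corresponding congruence subgroup $G_l(gx_0)$ attached to the vertex $gx_0$, a lattice $\Gamma$ satisfies $\Ad(g)(\Gamma)\in\Omega_l$ exactly when no nontrivial element of $\Gamma$ fixes $B(x,l)$ for $x=g^{-1}x_0$. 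Thus Theorem A is equivalent to the assertion that for every lattice $\Gamma$ there is a vertex $x$ in the $\Ad(\bbg)(F)$-orbit of $x_0$ with no nontrivial element of $\Gamma$ fixing $B(x,l(\bbg))$. Observe that replacing $\bbg(F)$ by the adjoint group enlarges this orbit to include hyperspecial vertices of \emph{every} type, which is what supplies the extra room, and is already essential in type $\A$, where $l(\bbg)=1$.

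The first reduction is to unipotent elements. For any vertex $x$ the group $\Gamma\cap G_1(x)$ is the intersection of a discrete group with a compact open pro-$p$ subgroup, hence a finite $p$-group; since $\char F=p$ every such element is unipotent. Consequently only nontrivial unipotent $\gamma\in\Gamma$ can fix a ball $B(x,l)$ with $l\ge 1$. If $\Gamma$ is cocompact it has no nontrivial unipotents, so $\Gamma\cap G_1(x)=1$ for every $x$ and we are done with room to spare. In the non-cocompact case I would invoke reduction theory for $\Gamma$ (Harder--Behr--Soul\'e): the unipotent elements of $\Gamma$ are organized along finitely many cusps, i.e. $F$-rational parabolic subgroups $P=MN$, the quotient $\Gamma\backslash X$ is a compact core with finitely many horoball ends, and for $\gamma\in\Gamma\cap N$ the locus of vertices at which $\gamma$ fixes $B(\cdot,l)$ is contained in a horoball whose depth grows linearly in $l$. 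The candidate $x$ is then a hyperspecial vertex lying just outside every open horoball, on the boundary of the compact core.

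The heart of the matter, and the step I expect to be the main obstacle, is the quantitative estimate that pins the constant to exactly $l(\bbg)$. For a one-parameter unipotent $x_\alpha(f)$ the fixing depth at the vertex $x=x_0+\lambda$ is affine-linear in $\lambda$ with gradient $\alpha$, equal to $v(f)+\langle\alpha,\lambda\rangle$, so the depth changes between neighbouring hyperspecial vertices by values $\langle\alpha,\varpi_i^\vee\rangle$; for the highest root one has $\langle\tilde\alpha,\varpi_i^\vee\rangle=c_i$, the $i$-th mark, whose maximum is $l(\bbg)$. This is precisely the granularity obstruction: although one would like to place $x$ exactly on a horoball boundary (depth $0$), the nearest available hyperspecial vertex can be forced up to $l(\bbg)-1$ steps into the fixed region, so the best universal guarantee is that every nontrivial $\gamma$ has fixing depth $\le l(\bbg)-1$, i.e. $\Gamma\cap G_{l(\bbg)}(x)=1$. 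Two points need care. First, a general unipotent is a product of root-group elements, and the Chevalley commutator relations must be used to show that its fixing depth is still governed by the highest root, where the coefficients $c_i$ re-enter. Second, one must ensure that the vertex $x$ chosen near one cusp keeps the unipotents of all other cusps at low depth simultaneously; this holds because $x$ is deep in the core relative to those cusps, so the only binding constraint comes from the single horoball that $x$ abuts, where the mark estimate applies. (If full reduction theory is to be avoided, this simultaneity can instead be arranged by choosing $x$ to minimize a suitable proper complexity function on the $\Ad(\bbg)(F)$-orbit, the binding cusp again being the one realizing the minimum.)
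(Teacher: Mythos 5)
Your translation into the building and your first reduction are sound and in fact parallel the paper's opening moves: $\Gamma\cap G_1$ is the intersection of a discrete group with a pro-$p$ group, hence a finite $p$-group, hence unipotent; and your observation that moving a hyperspecial base point by a fundamental coweight shifts the congruence depth of a root element $u_\beta(f)$ by $\langle\beta,\varpi_i^\vee\rangle$, so that the marks $c_i=\langle\tilde\alpha,\varpi_i^\vee\rangle$ with maximum $l(\bbg)$ are the true source of the constant, is exactly the content of the paper's properties (P1)--(P3) of the adjoint automorphisms $h(\theta_\alpha)$ (which are precisely translation by $\varpi_\alpha^\vee$). But the argument as written has two genuine gaps. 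First, your appeal to Harder--Behr--Soul\'e reduction theory to organize the unipotents of $\Gamma$ into finitely many cusps presupposes that $\Gamma$ is arithmetic. Theorem A carries no rank hypothesis, and the paper stresses (Remark (ii)) that it applies to $\mathbb{SL}_2$, where lattices over $\bbf_q((t^{-1}))$ are typically non-arithmetic and no such cusp structure is available. The paper's proof never invokes arithmeticity.

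Second, and more seriously, the quantitative heart of the theorem --- exhibiting one vertex in the $\Ad(\bbg)(F)$-orbit at which \emph{every} nontrivial element of $\Gamma$ fails to fix the $l(\bbg)$-ball --- is only argued for a single one-parameter root element; the extension to a general unipotent subgroup and the simultaneity over all elements (your ``other cusps'') are named as difficulties but not resolved, and these are exactly where the work lies. The paper's mechanism is: (a) the Borel--Tits--Gille embeddability theorem places the \emph{entire} group $\Gamma\cap G_1$ inside $R_u(\bbp_\Xi)(F)$ for a single standard $F$-parabolic, with $N_G(\Gamma\cap G_1)\subseteq\bbp_\Xi(F)$, so there is only one ``cusp'' to fight; (b) a lexicographic minimization of the pair $\bigl(n_\Gamma,\ |G_{n_\Gamma-1}\cap\Gamma|\bigr)$ over the orbit, which guarantees that each $h(\theta_\alpha)^{-1}$ for $\alpha\in\Xi$ neither lowers the level nor shrinks the top intersection, forcing $\Gamma\cap G_{n-1}=\Gamma\cap h(\theta_\alpha)(G_{n-1})$; and (c) the contradiction $\xi_n(\Gamma\cap G_{n-1})\subseteq R_u(\bbp_\Xi)\cap\bigcap_{\alpha\in\Xi}\bbp_\alpha^-=R_u(\bbp_\Xi)\cap\bbp_\Xi^-=1$. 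Your closing parenthetical about minimizing ``a suitable proper complexity function'' gestures at (b), but without specifying the two-step complexity and without (a) and (c) the argument does not close; as it stands the proposal establishes the right heuristic for the constant $l(\bbg)$ but not the theorem.
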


 Note that for this theorem we do not have any restriction on the characteristic of $F$ or type of $\bbg$. This theorem provides us a fairly large ball in the fundamental domain of any lattice. Let us recall that $l(\bbg)$ is 1 only when $\bbg$ is of type A. So in the other cases, theorem A is not quite optimal. 
 
 \begin{thmb*}~\label{second}
 Let $\bbg$ be a simply connected Chevalley group of either classical type or type $\E_6$. Then an $\Ad(\bbg)(F)$-orbit in $\Omega$ consisting of lattices of minimum covolume intersects $\Omega_1$ if $q>3$ (resp. $q>2$) when $\bbg$ is (resp. is not) of type $\D_2$ or $\E_6$.
 \end{thmb*}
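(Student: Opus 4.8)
The plan is to use Theorem A as the starting point. Conjugating within the $\Ad(\bbg)(F)$-orbit preserves covolume, so by Theorem A we may take a representative $\Gamma$ with $\Gamma\cap G_{l(\bbg)}=1$, and the task becomes to sharpen this to $\Gamma\cap G_1=1$ for some conjugate. The key reformulation I would use is geometric: conjugating $\Gamma$ by $g\in\bbg(F)$ replaces $G_1$ by the first congruence subgroup at the vertex $g^{-1}v_0$ of the Bruhat--Tits building $X$, where $v_0$ is the hyperspecial vertex with stabilizer $K=\bbg(\o)$. Hence the orbit meets $\Omega_1$ if and only if $\Gamma$ has \emph{some} vertex $w$ at which the reduction $\Gamma_w\to\bbg(\bbf_q)$ is injective, equivalently at which no nontrivial element of $\Gamma$ fixes the ball $B(w,1)$ pointwise. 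The whole problem is thus to produce one such clean vertex.

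First I would pin down the obstruction. A nontrivial element of $\Gamma\cap G_1$ lies in the compact pro-$p$ group $G_1$ and is discrete, so $\Gamma\cap G_1$ is a finite $p$-group; any element of order $p$ in it is unipotent, of the form $1+M$ with $M$ nilpotent and $M\equiv 0\bmod t^{-1}$. Using the identification of the congruence filtration with pointwise stabilizers of balls around $v_0$, such an element fixes $B(v_0,1)$ but not $B(v_0,l(\bbg))$, the latter because $\Gamma\cap G_{l(\bbg)}=1$. So a vertex $w$ is ``bad'' precisely when some nontrivial torsion $\gamma\in\Gamma$ has $B(w,1)\subseteq X^{\gamma}$, where $X^{\gamma}$ is the convex, nonempty, proper fixed-point subcomplex of $\gamma$.

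Next I would bring in minimality. A lattice of minimum covolume is maximal, and its quotient is controlled through the chamber formula $\vol(\Gamma\backslash G)=\vol(I)\sum_{C}|\Gamma_C|^{-1}$, the sum running over $\Gamma$-orbits of chambers with Iwahori stabilizer $I$. The lattice $\bbg(\bbf_q[t])$, which already lies in $\Omega_1$, supplies the upper bound $\vol(\Gamma\backslash G)\le\vol(\bbg(\bbf_q[t])\backslash G)$. The core of the argument is to show that if every vertex of a fundamental domain were bad, then the family of fixed complexes $X^{\gamma}$ would either let one enlarge $\Gamma$ (contradicting maximality) or force a count of chamber-orbits and stabilizer sizes that drives $\vol(\Gamma\backslash G)$ strictly past this upper bound. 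At each relevant vertex this reduces to counting inside the finite group $\bbg(\bbf_q)$ and its spherical building: one needs the directions $B(w,1)$ lying in no $X^{\gamma}$ to outnumber those that do, and here the valence of $X$---hence the size of $q$---enters, producing the threshold $q>2$ in general.

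I expect the main obstacle to be exactly this local-to-global passage: a single torsion element controls only its own complex $X^{\gamma}$, whereas one must handle the whole family $\{X^{\gamma}\}$ along a fundamental domain at once to guarantee a clean vertex, all while keeping the covolume bookkeeping sharp enough that the finite-field count yields the stated bound rather than a weaker one. The separate treatment of $\D_2$ and $\E_6$, which need $q>3$, should come from their looser geometry---the product-of-trees building in the first case and the larger depth $l(\E_6)=3$ in the second---both of which weaken the generic estimate and require the extra room that $q>3$ provides.
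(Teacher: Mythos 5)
Your high-level strategy agrees with the paper's: start from Theorem A, exploit that a minimum-covolume lattice has covolume less than $2$ (the bound coming from $\bbg(\bbf_q[t])$), and derive a contradiction by counting at a vertex. But the central step of your plan --- showing that if every vertex were ``bad'' then either maximality fails or the covolume exceeds the bound --- is left as a black box, and the mechanism you sketch (counting directions $B(w,1)$ inside versus outside the fixed complexes $X^{\gamma}$ along a fundamental domain) is not the one that works and is not carried out. The paper's actual engine is different and more concrete: since $\Gamma\cap G_{1}$ is a finite unipotent $p$-group, Borel--Tits (Theorem 2.8 in the paper) places it inside $R_u(\bbp)(\o)$ for a standard parabolic $\bbp=\bbp_{\Xi}$ with $\Gamma\cap G_0\subseteq\bbp(\o)$; combined with $|\bbg(\bbf_q)|\ge|\bbp(\bbf_q)|\cdot|R_u(\bbp)(\bbf_q)|$ and the fact that $\Gamma\cap G_1$ is \emph{abelian} (because $\Gamma\cap G_2=1$ and $[G_1,G_1]\subseteq G_2$), this yields $|\Gamma\cap G_0|\le p^{-1}|\bbg(\bbf_q)|$ whenever $R_u(\bbp)$ is nonabelian, whence $\vol(G/\Gamma)\ge p>2$ by the elementary index lemma $\vol(G/\Gamma)\ge|\bbg(\bbf_q)|/|\Gamma\cap G_0|$. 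Nothing in your proposal supplies this inequality or a substitute for it.

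Two further points. First, the case the counting cannot resolve --- $R_u(\bbp)$ abelian, i.e.\ $\bbp=\bbp_{\alpha}$ maximal with $\alpha$ of coefficient $1$ in the highest root --- is handled in the paper by applying the adjoint automorphism $h(\theta_{\alpha})$, which is in $\Ad(\bbg)(F)$ but in general \emph{not} inner from $\bbg(F)$; your reformulation in terms of conjugation by $g\in\bbg(F)$ (moving $v_0$ within its $G$-orbit, hence preserving vertex type) does not reach these automorphisms and so cannot close this case. Second, your diagnoses of the thresholds are off: for $\D_2=\mathbb{S}{\rm pin}_4$ the hypothesis $q>3$ is forced by the covolume formula $\prod_i(1-q^{-m_i})^{-1}=(1-q^{-1})^{-2}$, which exceeds $2$ at $q=3$, not by the product-of-trees geometry; and for $\E_6$ the real extra work is a separate step pushing $\Gamma$ out of $G_2$ first, which requires a dimension estimate for the normalizer of a subgroup of $\Lie(\bbu_{\alpha_0})(\bbf_q)$ (it lies in a proper parabolic $\bbl$ with $\dim\bbl+\dim(\bbl\cap R_u(\bbp))+1<\dim\bbg$) before the classical-type argument can be run. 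As it stands the proposal is an outline of intent rather than a proof.
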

 
Altogether we can restrict ourselves to the lattices in $\Omega_1$. 

  \begin{thmc*}~\label{main}
   Let $\bbg$ be a simply connected  Chevalley group of rank bigger than one.  If  $p > 5$ (resp. $p>7$) when $\bbg$ is not (resp. is) of type $\G_2$, and $q>9$ when $\bbg$ is not of type $\A$, then a lattice of minimum covolume which is in $\Omega_1$, up to an automorphism of $G$, is $\bbg(\bbf_q[t])$.  \end{thmc*}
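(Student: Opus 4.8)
The plan is to express the covolume as a combinatorial quantity on the Bruhat--Tits building $X$ of $G$ and then to minimize it. Recall Serre's equivariant Euler--Poincar\'e formula: a lattice $\Gamma$ acting on $X$ satisfies, up to a normalizing constant $c_G>0$ depending only on $G$,
$$\mu(\Gamma\backslash G)=c_G\,|\chi(\Gamma)|,\qquad \chi(\Gamma)=\sum_{\sigma}\frac{(-1)^{\dim\sigma}}{|\Gamma_\sigma|},$$
the sum running over representatives $\sigma$ of the $\Gamma$-orbits of simplices of $X$ and $\Gamma_\sigma$ the finite stabilizer. In positive characteristic the lattices are non-uniform, but the formula still holds through the reduction theory of Harder and Behr. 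Thus minimizing the covolume among lattices in $\Omega_1$ amounts to minimizing $|\chi(\Gamma)|$, and the goal becomes to prove $|\chi(\Gamma)|\ge|\chi(\bbg(\bbf_q[t]))|$ with equality forcing $\Gamma$ to be $\bbg(\bbf_q[t])$ up to an automorphism of $G$.

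The first structural input is the hypothesis $\Gamma\in\Omega_1$. Since $\Gamma\cap G_1=1$ and $G_1=\ker(\xi_1)$, the stabilizer of the base special vertex $v_0$ fixed by $\bbg(\o)$ embeds, via $\xi_1$, into $\bbg(\bbf_q)$, so $|\Gamma_{v_0}|\le|\bbg(\bbf_q)|$; more generally the stabilizers of nearby simplices are controlled through parahoric reduction by the Levi and unipotent data of the corresponding parahorics. The second input is reduction theory for $\bbg$ over $\bbf_q[t]$ (Soul\'e, Behr, Harder): the quotient $\Gamma\backslash X$ is a compact core with finitely many attached cusps, each cusp being, up to finite data, a Weyl sector along which the stabilizers grow geometrically through their unipotent radicals, which is exactly what forces $\chi(\Gamma)$ to converge. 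For $\bbg(\bbf_q[t])$ the core is a single special vertex and there is one sector.

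I would then run the minimization in two stages. In the geometric stage, using Theorem~A to place a sizeable ball inside the fundamental domain and the convergence estimates along each cusp, I would show that any additional chamber in the core, or any second cusp, contributes a net positive amount to $|\chi(\Gamma)|$, so that the minimum forces $\Gamma\backslash X$ to be a single sector with tip a special vertex $v_0$. In the group-theoretic stage, minimality at the tip forces $\Gamma_{v_0}$ to fill all of $\bbg(\bbf_q)$, and along the sector the stabilizers must be the parabolic-type subgroups with maximal unipotent parts; combined with the fact that $\bbg(\bbf_q[t])$ is generated by $\bbg(\bbf_q)$ together with the root subgroups running out along the sector, this identifies $\Gamma$, after conjugating $v_0$ to the standard special vertex, with $\bbg(\bbf_q[t])$ up to a diagram or field automorphism of $G$.

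The hard part will be the interlocking of these two stages: one must rule out quantitatively every competing fundamental domain --- a core larger than a point, several cusps, or a smaller-than-maximal tip stabilizer offset by a thinner domain --- by showing that each strictly raises $|\chi(\Gamma)|$. This needs sharp estimates on the orders and indices of subgroups of the finite groups of Lie type $\bbg(\bbf_q)$ and of their parabolics (maximal subgroup orders, sizes of unipotent radicals, and the absence of small-index or exotic subgroups), and this is exactly where the arithmetic hypotheses enter: $p>5$ (and $p>7$ for $\G_2$) excludes the special isogenies and small-characteristic subgroup coincidences, while $q>9$ for $\bbg$ not of type $\A$ makes the numerical inequalities comparing $|\bbg(\bbf_q)|$ with the cusp contributions hold with room to spare, so that equality in $\chi$ is rigid and yields the asserted uniqueness.
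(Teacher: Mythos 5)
Your proposal takes a route that is fundamentally different from the paper's, and it has genuine gaps that the paper's machinery exists precisely to fill. The paper does not minimize an Euler--Poincar\'e characteristic over the building at all: it first uses the covolume bound together with cohomology vanishing theorems ($H^1(\bbg(\bbf_q),\Lie(\bbg)(\bbf_q))$, via Bernstein and an explicit computation for type $\A$) to conjugate $\Gamma\cap\bbg(\o)$ onto $\bbg(\bbf_q)$ \emph{as a subgroup}, not merely as an abstract isomorphic copy; then invokes Margulis arithmeticity to realize $\Gamma$ as commensurable with $\bbg(\o_k(\nu_0))$ for some global function field $k$ and some $k$-form, uses the embedded $\bbg(\bbf_q)$ to force the form to be $k$-split, applies the Borel--Prasad maximality criterion to pin $\Gamma$ down as a normalizer, and finally uses Harder's Tamagawa number computation, the Riemann hypothesis for curves, and the Weierstrass gap theorem to force $\deg\nu_0=1$ and genus zero, hence $k=\bbf_q(t)$. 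Your plan skips all of this: you tacitly assume from the outset that $\Gamma$ lives over $\bbf_q[t]$ (``reduction theory for $\bbg$ over $\bbf_q[t]$\dots the quotient is a compact core with cusps''), whereas a priori a competing lattice of small covolume could arise from a different global field $k$ (higher genus, or a place of higher degree) or even a non-split $k$-form, and the quotient of the building by such a lattice is not a single sector. Ruling these out is the bulk of the paper's work and cannot be absorbed into a vague monotonicity claim about $|\chi(\Gamma)|$.

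Two further points where your argument would fail as written. First, the assertion that ``any additional chamber in the core, or any second cusp, contributes a net positive amount to $|\chi(\Gamma)|$'' is not justified and is delicate: the sum defining $\chi(\Gamma)$ has alternating signs, so enlarging the quotient complex does not obviously increase $|\chi|$; even in the rank-one case Lubotzky's analysis of the quotient graph required substantial work, and in higher rank no such combinatorial minimization is carried out in the literature or in this paper. Second, even granting that the tip stabilizer has order $|\bbg(\bbf_q)|$, identifying it with the standard copy of $\bbg(\bbf_q)$ inside $\bbg(\o)$ up to conjugation is a genuine rigidity statement; this is exactly where the hypotheses $q>9$, $p>5$ enter the paper (through the $H^1$ vanishing of Theorems 6.1 and 6.2 and the root-separation Lemma 7.3), not, as you suggest, through ``numerical inequalities comparing $|\bbg(\bbf_q)|$ with cusp contributions.'' Your outline correctly identifies the final normalizer computation $N_{\bbg(F)}(\bbg(\bbf_q[t]))=\bbg(\bbf_q[t])$ via reduction theory, which does match the paper's last step, but the path to that point is missing its essential arithmetic ingredients.
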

  
 \begin{remark}
 \begin{itemize}
 \item[i)] In theorem C, there is no restriction on the type of the Chevalley groups, and the type restriction in the main theorem is because of theorem B. As you will see the proof of theorem B is unified for the all the classical types and it is ad hoc for type $\E_6$.
 \item[ii)] Though in the proof of the main theorem, we use the higher rank assumption, to prove theorem A, we do not need any assumption on the rank. In particular, it is also valid for $\mathbb{SL}_2$. Using this fact and by the virtue of the proof of theorem C, we can also give an alternative proof for the work of Lubotzky (See~\cite[Chapter 5]{Sal}). 
 \item[iii)] Combining the description of the fundamental domain of a lattice of minimum covolume in $\SL_2(F)$, with the main theorem and using reduction theory, one can see that a lattice of minimum covolume  has a full Weyl chamber as a fundamental domain in the Bruhat-Tits building if $\bbg$ has a non-trivial center and is not of type $\E_7$, and  $p$ is large enough, e.g. $p>9$.
 \item[iv)] Lubotzky, in~\cite{Lu}, asked if a lattice of minimum covolume over a field of positive characteristic is ``generically" non-uniform. In a forthcoming paper~\cite{sa07}, I will partially give an affirmative answer to this question. I will prove the following theorem:
 \begin{thm} A lattice of minimum covolume  in $\bbg(\bbf_q((t)))$  is non-uniform if $\bbg$ is simply connected, absolutely almost simple over $\bbf_q((t))$ and the Tamagawa number over global function fields is 1.
 \end{thm}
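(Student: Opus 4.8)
The plan is to prove the stronger assertion that the specific non-uniform lattice $\bbg(\bbf_q[t])$ has strictly smaller covolume than every uniform lattice; the global minimum is then strictly below the covolume of any uniform lattice, so a minimizing lattice must be non-uniform. Realize $F=\bbf_q((t))$ as the completion $k_v$ of a global function field $k$ at a place $v$ of degree one; for $k=\bbf_q(t)$ and $v$ the place at infinity the ring of $\{v\}$-integers is $\bbf_q[t]$. Every arithmetic lattice in $\bbg(F)$ is commensurable with a principal $\{v\}$-arithmetic subgroup $\bbg'(\o_{k,\{v\}})$ attached to some such pair $(k,v)$ and some $k$-form $\bbg'$ of $\bbg$ with $\bbg'(k_v)\cong\bbg(F)$, and by Godement's criterion such a lattice is uniform precisely when $\bbg'$ is $k$-anisotropic. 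The split form over $k=\bbf_q(t)$ gives the non-uniform lattice $\bbg(\bbf_q[t])$, while any uniform lattice forces $\bbg'$ to be a $k$-anisotropic---hence non-quasi-split---form.

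The engine of the comparison is Prasad's volume formula. It expresses the covolume of $\bbg'(\o_{k,\{v\}})$ in $\bbg'(k_v)$ as a product of the Tamagawa number $\tau(\bbg')$, equal to $1$ by hypothesis, a global factor built from the genus of $k$ and the relative discriminant of the splitting field of $\bbg'$, and a product over the places $w\neq v$ of local Euler factors $\mathcal{E}_w$ determined by the reductive type of $\bbg'$ over the residue field at $w$. For the split form over $k=\bbf_q(t)$ every factor is minimal: the genus-zero contribution is smallest, the splitting field equals $k$ so its discriminant is trivial, and each $\mathcal{E}_w$ takes its split value. A uniform lattice, by contrast, comes from a $k$-anisotropic $\bbg'$; since $\bbg'(k_v)$ is isotropic, the Hasse principle and reciprocity (the product formula for local invariants) force $\bbg'$ to be ramified at one or more places $w\neq v$. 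At each such place the relevant contribution---in $\mathcal{E}_w$ or in the splitting-field discriminant---strictly exceeds its split value, while every other factor is bounded below by its split counterpart. Hence the covolume of any uniform \emph{arithmetic} lattice strictly exceeds that of $\bbg(\bbf_q[t])$.

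To reach all lattices I would invoke Margulis's arithmeticity theorem when $\mathrm{rank}_F\,\bbg\ge 2$, where higher-rank lattices are arithmetic, so that every lattice is arithmetic and the previous paragraph applies verbatim. When $\mathrm{rank}_F\,\bbg=1$ the Bruhat-Tits building is a tree and arithmeticity can fail, so I would instead argue through Bass-Serre theory as in Lubotzky's treatment of $\SL_2$, bounding the covolume of an arbitrary uniform tree lattice directly from below by the covolume of $\bbg(\bbf_q[t])$. Either way, no uniform lattice attains the minimum.

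The main obstacle is quantitative. Prasad's formula reduces the strict inequality to a lower bound on the ramification factor of an anisotropic form, namely the amount by which the local factors $\mathcal{E}_w$ at the ramified places, together with any splitting-field discriminant, exceed their split values. This bound must survive two competing effects that lower the uniform covolume: passing from a principal arithmetic subgroup to its maximal normalizer overgroup, and the precise size of the cuspidal contribution already present in the non-uniform covolume of $\bbg(\bbf_q[t])$. This is exactly where Lubotzky's parity phenomenon for $\SL_2$, the minimum being non-uniform only for odd $q$, becomes visible, so that bounding the anisotropic ramification factor from below uniformly in the type of $\bbg$ and in $q$ is the delicate step. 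The Tamagawa hypothesis is what renders the global parts of the two covolumes identical, leaving only these local ramification factors to be compared.
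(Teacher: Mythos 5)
You cannot be graded against the paper's own argument here, because the paper does not prove this statement: it appears only in Remark (iv) of the introduction as an announcement of a result to be established in the forthcoming paper \cite{sa07}. Nothing in Sections 3--9 addresses uniform lattices at all; the present paper only identifies the minimum among lattices in $\bbg(F)$ for split $\bbg$ and never compares against cocompact competitors. So the only thing I can assess is your outline on its own terms.

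Your strategy (arithmeticity, the Godement-type cocompactness criterion, then Prasad's volume formula to show every anisotropic form costs strictly more than the split one) is the natural one, but it has a structural gap and a quantitative one. Structurally, you never invoke Harder's theorem that a simply connected absolutely almost simple group over a global function field which is anisotropic must be an inner form of type $\A$, i.e.\ $\mathbb{SL}_1(D)$ for a division algebra $D$. This is the decisive input: for all other types there are \emph{no} uniform arithmetic lattices in $\bbg(F)$ whatsoever, so in higher rank the theorem is immediate from arithmeticity, and the entire content of the problem collapses to comparing $\SL_n(\bbf_q[t])$ with lattices coming from central division algebras over $\bbf_q(t)$ and other function fields. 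Your general discussion of ``splitting-field discriminants and local Euler factors exceeding their split values'' is aimed at a generality that does not occur, while the case that does occur (inner type $\A$, where the splitting field is $k$ itself and the whole comparison lives in the local factors at the ramified places of $D$, together with the index of the normalizer over the principal arithmetic subgroup from Borel--Prasad) is exactly the delicate computation you defer. Quantitatively, you acknowledge but do not close this step, and the rank-one case is a genuine hole: arithmeticity fails for tree lattices, Lubotzky's own result shows the conclusion is false for $\SL_2$ in characteristic $2$, and ``argue through Bass--Serre theory'' is a placeholder for what was an entire separate paper. As written, the proposal is a plausible road map rather than a proof.
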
    
 \end{itemize}
 \end{remark}

\textbf{\textit{Structure of the paper.}} In the second section, we shall fix the notations used in this article, and recall a few known results which are used here. Section 3 is devoted to the proof of theorem A. In the section 4, using the result of Harder~\cite{Ha2} on the Tamagawa number of a simply connected Chevalley group over a global function field, we calculate the covolume of $\bbg(\o_k({\nu_0}))$ in $\bbg(k_{\nu_0})$, where $\nu_0$ is a place of degree 1. In particular, we give a formula for the covolume of $\bbg(\bbf_q[t])$ in $G=\bbg(\bbf_q((t^{-1})))$, and give a ``nice" upper bound for this value. We will also use Riemann's hypothesis for curves over finite fields to show that if the genus of $k$ is not zero, then the covolume of  $\bbg(\o_k({\nu_0}))$ in $\bbg(k_{\nu_0})$ is large. Theorem B is proved in the section 5. In the rest of the paper,  we are proving theorem C. Here is the outline of the proof of theorem C: 

  \begin{description}
  \item[Step 1.] ({\bf Large finite group}) Since by the assumption $\Gamma\cap G_1=\{1\}$, $\Gamma\cap\bbg(\o)$ can be embedded into $\bbg(\bbf_q)$ by $\xi_1$
  the reduction map modulo uniformizing element $\pi=t^{-1}$. It is not hard to see that $\xi_1(\Gamma\cap\bbg(\o))=\bbg(\bbf_q)$. In order to have a better description of  $\Gamma\cap\bbg(\o)$, we will show that  $H^1(SL_n(\bbf_q),\mathfrak{gl}_n(\bbf_q))=0$, and recall a result of J.~Bernstein ~\cite[corollary 6.3]{We} on the first cohomology of $\bbg(\bbf_q)$ with the adjoint action. Using these cohomology vanishing theorems, we prove that there is an adjoint automorphism $\theta$ such that $\theta(\Gamma)\cap\bbg(\o)=\bbg(\bbf_q)$.
  \item[Step 2.] ({\bf Arithmetic structure}) In the first step, we have seen that $\bbg(\bbf_q)$ can be assumed to be  a subgroup of $\Gamma$.
On the other hand,  by Margulis' arithmeticity~\cite[chapter IX]{Ma}, we know that $\Gamma$ has an arithmetic structure.  In this step, we will show that $\Gamma$ is commensurable to $\bbg(\o_k(\nu_0))$ where $k$ is a function field whose $\nu_0$-completion $k_{\nu_0}$ is isomorphic to $F$ (as a topological field) and $\o_k(\nu_0)$ is the ring of $\nu_0$-integers of $k$, i.e. $\{ x \in k \hspace{1mm}|\hspace{1mm} {\rm for}\hspace{1mm}{\rm any}\hspace{1mm} {\rm place}\hspace{1mm}\nu\neq\nu_0,\hspace{1mm}\nu(x)\geq 0\}$.

  \item[Step 3.] ({\bf Exact form of the lattice}) By the previous step and (the easy part of) the function field analogue of the Rohlfs' maximality criterion (See~\cite{Ro} or~\cite{CR}), proved as a proposition in a work by Borel and Prasad~\cite[Proposition 1.4]{BP}, we shall prove
  that $\Gamma$ is equal to the normalizer of $\bbg(\o_k(\nu_0))$ in $\bbg(k_{\nu_0})$ with the same notations as in step 2 and identifying $k_{\nu_0}$ by $F$.

  \item[Step 4.] ({\bf Possible $k$ and $\nu_0$}) It is an easy consequence of the previous steps to see that $\nu_0$ is of degree one. If $\bbg$ is centerless, then it is easy to show that the genus of $k$ is zero, and if $\bbg$ has a non-trivial center, we use Weierstrass' gap theorem (See~\cite{nt} or~\cite{A}) and get the same conclusion.  Then as $k$ is a genus zero global function field with a place of degree one, we conclude  that $k$ is isomorphic to the rational functions $\bbf_q(t)$.

  \item[Step 5.] ({\bf Maximality of $\bbg(\bbf_q[t])$}) In the final step using reduction theory~\cite{Ha1, Sp, P}, we  show that $N_{\bbg(F)}(\bbg(\bbf_q[t]))=\bbg(\bbf_q[t])$, which completes the proof of theorem C.
  \end{description}

\textbf{\textit{Acknowledgements.}} I would like to thank
Professor A.~Lubotzky for suggesting  this problem and his
constant encouragement. I am very grateful to Professor
G.~A.~Margulis for his useful advice and discussions without which
I could not solve this problem. I am also in debt to Professor 
G.~Prasad for carefully reading the earlier version of this aritcle 
and pointing out some errors. I would also like to thank the anonymous referee for extensive comments on an earlier version, which made the presentation of this article much more coherent and pleasant. 

\section{Notations and background.}

\textbf{\textit{Global function field.}} The algebraic and
separable closures of a field $k$ would be denoted by $\bar k$ and
$k_s$, respectively. Let $k$ be a global function field. The
subfield of $k$ consisting of algebraic elements over $\bbf_p$ is
called the \textit{constant field}. When we say $k/\bbf_q$ is a
global function field, we mean that its constant field is
$\bbf_q$. For any place $\nu$ of $k$, its completion with respect to $\nu$ 
and valuation ring are denoted by $k_{\nu}$ and $\o_{\nu}$, respectively. 
Ring of $\nu$-integers $\o_k(\nu)$ is equal to $\{ x \in k \hspace{1mm}|\hspace{1mm} {\rm for}\hspace{1mm}{\rm any}\hspace{1mm} {\rm place}\hspace{1mm}\nu'\neq\nu,\hspace{1mm}\nu'(x)\geq 0\}$.
Number of elements of the residue field of $k_{\nu}$ is
denoted by $q_{\nu}$. As a corollary of Riemann-Roch theorem, one
can see Weierstrass' gap theorem:
\begin{thm}~\cite{nt, A}
Let $k$ be a global function field of genus $g$, and $\nu$ a place of degree one. Then $\nu(\o_k(\nu))=\bbz^{\le 0}\setminus\{-a_1,\cdots,-a_g\}$, where $a_1, a_2, \cdots, a_g$ are $g$ natural numbers which are less than $2g$. 
\end{thm}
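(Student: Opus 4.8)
The plan is to deduce the statement directly from the Riemann--Roch theorem applied to the divisors $n\nu$ for $n\geq 0$. For a divisor $D$ on $k$ write $L(D)=\{x\in k^{\times}:\ \mathrm{div}(x)+D\geq 0\}\cup\{0\}$ and $\ell(D)=\dim_{\bbf_q}L(D)$, the dimension taken over the constant field $\bbf_q$ (legitimate since $\nu$ has degree one, so its residue field is $\bbf_q$). First I would pin down the sign of the valuations that occur: for $0\neq x\in\o_k(\nu)$ one has $\nu(x)\leq 0$. Indeed, $x\in\o_k(\nu)$ means $x$ has no pole away from $\nu$; if in addition $\nu(x)>0$ then $x$ has no pole anywhere, hence $x$ is a constant and $\nu(x)=0$, a contradiction. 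Thus $\nu(\o_k(\nu))\subseteq\bbz^{\leq 0}$, and $0$ is attained by the nonzero constants, so it remains only to decide, for each $n\geq 1$, whether $-n\in\nu(\o_k(\nu))$; following the classical terminology I would call such an $n$ a \emph{pole number}, and otherwise a \emph{gap}.

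Next I would observe that $\o_k(\nu)=\bigcup_{n\geq 0}L(n\nu)$ and that $L((n-1)\nu)\subseteq L(n\nu)$, where the elements of $L(n\nu)\setminus L((n-1)\nu)$ are precisely the functions with a pole of order exactly $n$ at $\nu$. Consequently $n$ is a pole number exactly when $\ell(n\nu)=\ell((n-1)\nu)+1$, and a gap exactly when $\ell(n\nu)=\ell((n-1)\nu)$; because $\deg\nu=1$ these are the only two possibilities, i.e. each dimension increment is $0$ or $1$.

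The counting is then supplied by Riemann--Roch. Since $\ell(0)=1$, and since for $n\geq 2g-1$ the degree $\deg(K-n\nu)=2g-2-n$ is negative so that $\ell(K-n\nu)=0$ and hence $\ell(n\nu)=n-g+1$, the dimension climbs from $1$ at $n=0$ to $g$ at $n=2g-1$ in steps of size $0$ or $1$, and climbs by exactly $1$ for every $n\geq 2g$. Comparing the total increase $g-1$ against the $2g-1$ increments indexed by $n=1,\ldots,2g-1$ shows that exactly $(2g-1)-(g-1)=g$ of them are zero, all lying in $\{1,\ldots,2g-1\}$. These $g$ values are distinct natural numbers $a_1<\cdots<a_g$ with $a_i<2g$, and together with the sign discussion this yields $\nu(\o_k(\nu))=\bbz^{\leq 0}\setminus\{-a_1,\ldots,-a_g\}$.

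The argument is entirely standard once Riemann--Roch is available, so I do not expect a genuine obstacle; the one point that deserves care is the exact role of the hypothesis $\deg\nu=1$. It is this hypothesis that guarantees both that $\deg(n\nu)=n$ and that each increment $\ell(n\nu)-\ell((n-1)\nu)$ is at most $1$ (rather than at most $\deg\nu$), which is exactly what makes the bookkeeping reproduce the interval $\{1,\ldots,2g-1\}$ and forces the number of gaps to equal $g$; for a place of higher degree the increments could be larger and the clean ``$g$ gaps below $2g$'' conclusion would fail.
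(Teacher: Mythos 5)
Your argument is correct, and it follows exactly the route the paper indicates: the paper gives no proof of this statement, citing \cite{nt, A} and remarking only that it is ``a corollary of Riemann--Roch,'' which is precisely the derivation you carry out (filtering $\o_k(\nu)=\bigcup_n L(n\nu)$, noting each increment $\ell(n\nu)-\ell((n-1)\nu)\in\{0,1\}$ because $\deg\nu=1$, and counting gaps via $\ell(n\nu)=n+1-g$ for $n\ge 2g-1$). Your closing observation about where $\deg\nu=1$ is used is also the right point of care; the only further refinement worth knowing is that for $g>0$ the integer $1$ is always among the gaps (otherwise a function with a single simple pole would exhibit $k$ as rational), which is the specific consequence the paper later invokes when it uses $\o_k(\nu)\cap\pi^{-1}\o_\nu=\bbf_q$.
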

We just use the fact that such gap exists if $g$ is positive
i.e. -1 is not  in $\nu(\o_k(\nu))$. It is also well-known
that there are two types of genus zero function field. Such a field is
either a rational function field or function field of a conic. However a field of genus zero which has a place of degree one is isomorphic to a rational function field~\cite{A}.

Let $V_k$ be the set of all places of $k$. The zeta function $\zeta_k(s)$ of $k$ is defined as the following product $\zeta_k(s)=\prod_{\nu\in V_k}(1-q_{\nu}^{-s})^{-1}$.  A. Weil (e.g. see~\cite{Bom}) gave a complete description of the zeta function of $k$, proving ``Riemann hypothesis" for curves over finite fields.

\begin{thm}~\label{zeta}
Let $k/\bbf_q$ be a global function field of genus $g$. Then
 $$\zeta_k(s)=\frac{P_k(q^{-s})}{(1-q^{-s})(1-q^{1-s})},$$ where $P_k$ is a polynomial of degree $2g$ with integer coefficients, and its roots have absolute value $q^{-\frac{1}{2}}$.  
\end{thm}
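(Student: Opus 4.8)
The statement packages together two assertions of very different depth, and the plan is to treat them separately: the shape of $\zeta_k$ (rationality, integrality of the coefficients of $P_k$, and $\deg P_k = 2g$) is a formal consequence of the Riemann--Roch theorem, whereas the location of the roots is the ``Riemann hypothesis'' and is the genuinely hard part.

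For the first assertion, put $u = q^{-s}$ and rewrite $\zeta_k$ as the power series $Z_k(u) = \prod_{\nu}(1-u^{\deg\nu})^{-1}$. Expanding and using unique factorization of divisors into prime divisors gives $Z_k(u) = \sum_{D\ge 0}u^{\deg D} = \sum_{n\ge 0}A_n u^n$, where $A_n$ is the number of effective divisors of degree $n$; in particular every $A_n$ is a non-negative integer. Now I would invoke Riemann--Roch: the effective divisors in a fixed class of degree $n$ form a projective space of dimension $\ell(D)-1$, so there are $(q^{\ell(D)}-1)/(q-1)$ of them, and for $n>2g-2$ one has $\ell(D)=n-g+1$. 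Hence for large $n$, $A_n = h\,(q^{n-g+1}-1)/(q-1)$, where $h$ is the (finite) number of divisor classes of each degree; this is an explicit combination of the geometric series $\sum q^n u^n$ and $\sum u^n$, which exhibits $Z_k(u)$ as $P_k(u)/\bigl((1-u)(1-qu)\bigr)$ with $P_k$ a polynomial of integer coefficients and $P_k(0)=1$; substituting $u=q^{-s}$ recovers the stated denominator $(1-q^{-s})(1-q^{1-s})$. The involution $D\mapsto K-D$ in Riemann--Roch, together with $\deg K = 2g-2$, gives the functional equation $Z_k(u)=q^{g-1}u^{2g-2}Z_k\bigl(1/(qu)\bigr)$, equivalently $P_k(u)=q^g u^{2g}P_k\bigl(1/(qu)\bigr)$; this forces $\deg P_k = 2g$ and shows that the reciprocal roots $\alpha_i$, defined by $P_k(u)=\prod_{i=1}^{2g}(1-\alpha_i u)$, are stable as a multiset under $\alpha\mapsto q/\alpha$.

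It remains to prove $|\alpha_i|=\sqrt q$ for all $i$ (so that the roots $1/\alpha_i$ of $P_k$ have absolute value $q^{-1/2}$). I would first reduce this to counting points. Writing $N_r$ for the number of places of degree one of the constant-field extension $k\bbf_{q^r}$ (the $\bbf_{q^r}$-points of the curve), the Euler product yields $N_r = q^r+1-\sum_{i=1}^{2g}\alpha_i^{\,r}$. Considering the series $\sum_{r\ge 1}\bigl(\sum_i\alpha_i^{\,r}\bigr)u^r=\sum_i \alpha_i u/(1-\alpha_i u)$, whose poles sit exactly at the $1/\alpha_i$, one sees that a two-sided estimate $|N_r-(q^r+1)|\le c(g)\,q^{r/2}$ valid for all $r$, with any constant $c(g)$ depending only on $g$, forces every pole to satisfy $|1/\alpha_i|\ge q^{-1/2}$, i.e. $|\alpha_i|\le\sqrt q$; combined with the pairing $\alpha\mapsto q/\alpha$ from the functional equation this upgrades to $|\alpha_i|=\sqrt q$, and the sharp constant $2g$ then follows a posteriori. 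Thus everything reduces to a two-sided bound on the number of rational points over every finite extension.

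This point-count estimate is the main obstacle, and I would establish it by the elementary Stepanov--Bombieri method (the route behind the cited~\cite{Bom}). After passing to a large even power of $p$ so that $q$ is a square (the general case following by base change) and fixing a separating element $x$, one exploits that every rational point is a fixed point of the Frobenius $z\mapsto z^q$, so that functions assembled from $x$ and $x^q$ acquire high-order zeros there. Using Riemann--Roch purely as a dimension count, I would produce a nonzero function lying in a Riemann--Roch space of controlled pole degree that vanishes to order at least $M$ at each rational point; comparing the number of zeros with the degree of the pole divisor and optimizing the parameters yields the upper bound $N\le q+1+c(g)\sqrt q$, and a complementary construction bounding $N$ from below gives the matching lower bound, so that the two-sided estimate holds over every $\bbf_{q^r}$. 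I expect the delicate points to be the explicit Riemann--Roch bookkeeping guaranteeing the auxiliary function and the separability hypotheses needed to control zero orders. A conceptually cleaner, though heavier, alternative is Weil's original argument: realize the $\alpha_i$ as eigenvalues of the Frobenius endomorphism of the Jacobian and deduce $|\alpha_i|=\sqrt q$ directly and two-sidedly from the positivity of the Rosati involution, equivalently from the Castelnuovo--Severi (Hodge index) inequality applied to the graph of Frobenius on $X\times X$.
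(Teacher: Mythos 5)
The paper does not prove this statement at all: it is quoted as a classical theorem of Weil, with~\cite{Bom} given as a reference, and nothing in the paper depends on how it is proved. So there is no internal argument to compare yours against; what you have written is an outline of the standard proof that the cited literature supplies, and as an outline it is essentially sound. The Riemann--Roch computation of $A_n$ for $n>2g-2$, the functional equation via $D\mapsto K-D$, the reduction of the root location to the two-sided point count $|N_r-(q^r+1)|\le c(g)q^{r/2}$ via the radius of convergence of $\sum_r\bigl(\sum_i\alpha_i^r\bigr)u^r$, and the upgrade from $|\alpha_i|\le\sqrt q$ to equality using the involution $\alpha\mapsto q/\alpha$ are all correct and are exactly the route behind Bombieri's expos\'e.

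Two caveats. First, a genuine (if standard) logical gap: your formula $A_n=h\,(q^{\ell(D)}-1)/(q-1)$ summed over classes of degree $n$ silently assumes that divisor classes exist in every degree, i.e.\ that the degree map on divisors is surjective onto $\bbz$. If its image were $d\bbz$ with $d>1$, then $Z_k$ would be a power series in $u^d$ and the claimed denominator $(1-u)(1-qu)$ would be wrong. Surjectivity is F.~K.~Schmidt's theorem, and it is usually deduced from the very functional-equation analysis you are carrying out (or proved separately); it needs to be addressed, not assumed, along with the finiteness of the class number $h$. Second, the Stepanov--Bombieri point count, which you correctly identify as the whole difficulty, is only sketched; that is reasonable for a result of this depth, but be aware that the deduction of the lower bound on $N$ from the auxiliary-function construction (or from the upper bound applied to the other Frobenius classes, as Bombieri does) is where most of the remaining work lives.
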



\textbf{\textit{Unipotent subgroups of reductive groups in
characteristic $p>0$.}} Borel and Tits~\cite{BT} studied unipotent
subgroups of reductive groups and generalized some of the results
which were known in the characteristic zero. They associated a
$k$-closed parabolic $P$ to a $k$-closed unipotent subgroup $U$ of
a connected reductive $k$-group $\bbg$, such that
$N_{\bbg}(U)\subseteq P$ and $U\subseteq R_u(P)$. In general, even
if $U$ is defined over $k$, $P$ is not necessarily defined
over $k$. A unipotent subgroup $U$ of $\bbg(\bar{k})$ is called
$k$-embeddable if one can find a $k$-parabolic subgroup of $\bbg$
which contains $U$ in its unipotent radical. A unipotent element
$u$ of $\bbg(k)$ is called $k$-embeddable if the group generated
by $u$ is $k$-embeddable. Borel and Tits, in their fundamental
work, showed a number of important theorems some of which we recall
here.

\begin{thm}~\cite[proposition 3.1]{BT}~\label{ks-embeddable}
Let $\bbg$ be a connected reductive $k$-group and $U\subseteq
\bbg$ be a $k$-closed, $k_s$-embeddable unipotent subgroup of
$\bbg$. Then there exits a $k$-parabolic subgroup $\bbp$ of $\bbg$
such that:

\begin{itemize}
\item[(i)] $U\subseteq R_u(\bbp)$. \item[(ii)] $N_{\bbg}(U) \cap
\bbg(k_s) \subseteq \bbp$.
\end{itemize}
\noindent
 In particular, $U$ is $k$-embeddable.
\end{thm}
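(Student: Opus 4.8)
The plan is to produce a \emph{canonical} parabolic attached to $U$, because canonicity is exactly what forces rationality over $k$: a construction that commutes with automorphisms of $\bbg$ will send the Galois-stable group $U$ (it is $k$-closed) to a Galois-stable parabolic, which then descends to a $k$-parabolic. Over $\bar k$ every unipotent subgroup is already embeddable, so the real content is this descent; accordingly I would first build the parabolic over $k_s$ and only at the end pass to $k$.

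For the construction I would iterate the two operations ``take the normalizer'' and ``take the unipotent radical''. Set $U_0=U$ and, inductively, $N_i=N_{\bbg}(U_{i-1})$ and $U_i=R_u(N_i)$. Since $U_{i-1}$ is a normal unipotent subgroup of $N_i$, it sits inside $R_u(N_i)=U_i$, so the $U_i$ form an increasing chain of unipotent subgroups; by dimension and length considerations it stabilizes, $U_n=U_{n+1}$. Put $\bbp=N_{\bbg}(U_n)$. Then $R_u(\bbp)=U_n\supseteq U$, which gives (i); and any $g$ normalizing $U$ normalizes $N_{\bbg}(U)$ and hence, by canonicity of $R_u$, normalizes each $U_i$ in turn, so $N_{\bbg}(U)\subseteq\bbp$, which gives (ii). At this stage $\bbp$ is a subgroup equal to the normalizer of its own nontrivial unipotent radical.

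The crux is to show that this self-normalizing $\bbp$ is genuinely parabolic; this is where the embeddability hypothesis enters and where the characteristic-$p$ difficulties live. I would use the $k_s$-embeddability to fix a proper $k_s$-parabolic $\bbq$ with $U\subseteq R_u(\bbq)$, which keeps the whole chain inside proper parabolics and lets me compare $\bbp$ with the dynamic parabolics $P(\lambda)=\{g:\lim_{s\to 0}\lambda(s)g\lambda(s)^{-1}\ \mathrm{exists}\}$ attached to cocharacters $\lambda$ with $U\subseteq U(\lambda)$. The Borel--Tits structure theory then identifies $\bbp$ with $P(\lambda)$ for a canonically determined $\lambda$, so that $R_u(\bbp)=U(\lambda)$ and $\bbp$ is parabolic. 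This is the main obstacle: in characteristic $p$ unipotent groups need not be smooth or connected, and the normalizer/radical operations can introduce inseparability, so over $\bar k$ alone the resulting parabolic may be defined only over a purely inseparable extension. The assumption that $U$ is $k_s$-embeddable, rather than merely $\bar k$-embeddable, is precisely what guarantees a \emph{separable} cocharacter, hence a $k_s$-rational parabolic, is available.

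Finally I would descend. The entire construction is canonical, i.e. it commutes with every automorphism of $\bbg$, in particular with the action of $\mathrm{Gal}(k_s/k)$; since $U$ is $k$-closed it is $\mathrm{Gal}(k_s/k)$-stable, hence so is each $U_i$ and so is $\bbp$. Being defined over $k_s$ and Galois-stable, $\bbp$ descends to a $k$-parabolic satisfying (i) and (ii), and the concluding assertion that $U$ is $k$-embeddable is then immediate from (i). I expect the genuinely hard part to be the parabolicity-and-separability step above; the normalizer bookkeeping and the Galois descent are formal once that is in place.
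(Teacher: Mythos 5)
First, a point of comparison: the paper offers no proof of this statement at all --- it is quoted verbatim from Borel--Tits as background (their Proposition 3.1), so there is no internal argument to measure you against. What follows therefore compares your sketch with the actual Borel--Tits proof. Your overall architecture --- manufacture a \emph{canonical} parabolic by iterating normalizer and unipotent radical, then descend from $k_s$ to $k$ using equivariance of the canonical construction --- is indeed the Borel--Tits strategy, and your descent paragraph is sound: $k$-closedness makes $U$ stable under $\mathrm{Gal}(k_s/k)$, the construction commutes with that action, and a Galois-stable $k_s$-parabolic is a $k$-parabolic.

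There are, however, two genuine gaps. First, your chain is wrong as stated: from ``$U_{i-1}$ is a normal unipotent subgroup of $N_i$'' you conclude $U_{i-1}\subseteq R_u(N_i)$, but $R_u$ is the largest \emph{connected} normal unipotent subgroup, so this inclusion fails whenever $U_{i-1}$ is disconnected --- and the case this paper actually needs is a finite $p$-group $U=\Gamma\cap G_1$, for which $R_u(N_{\bbg}(U))$ need not contain $U$ at all and your sequence need not be increasing. Borel--Tits instead set $U_{i+1}=U_i\cdot R_u(N_{\bbg}(U_i))$, which restores monotonicity and keeps each term unipotent. Second, the step you yourself flag as the crux --- that the stabilized, self-normalizing group is parabolic --- is not argued but outsourced to ``the Borel--Tits structure theory then identifies $\bbp$ with $P(\lambda)$''; that identification is essentially the content of the proposition being proved, so as written the argument is circular at its load-bearing joint. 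A real proof must use the $k_s$-parabolic $\bbq$ with $U\subseteq R_u(\bbq)$ quantitatively (for instance via $N_{\bbg}(R_u(\bbq))=\bbq$ together with an induction comparing the canonical chain with $\bbq$), not merely as a licence to invoke a classification.
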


\begin{thm}~\cite[proposition 3.6]{BT}~\label{unipelem}
A unipotent subgroup of $\bbg(k)$ is $k$-embeddable if each of its
elements is $k_s$-embeddable.
\end{thm}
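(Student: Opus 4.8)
The plan is to deduce this from Theorem~\ref{ks-embeddable}. Since $k$-embeddability depends only on the Zariski closure $\bar U$ of $U$ (the unipotent radical of a parabolic being closed), and $\bar U$ is a $k$-closed unipotent subgroup, it suffices to prove that $\bar U$ is $k_s$-embeddable: Theorem~\ref{ks-embeddable} then promotes this to $k$-embeddability. The problem thus reduces to establishing the statement over the separably closed field $k_s$, where being embeddable means being contained in $R_u(P)$ for some parabolic $P$, every parabolic of $\bbg_{k_s}$ being automatically defined over $k_s$. Since $k_s$ is typically imperfect in characteristic $p$, this is genuine content and cannot be dispatched by passing to $\bar k$.

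Over $k_s$ I would argue by induction on $\dim\bbg$. Let $V=\bar U$ and let $P=P(V)$ be the canonical parabolic that Borel and Tits attach to the unipotent group $V$; it is defined over $k_s$, contains $V$, and satisfies $N_{\bbg}(V)\cap\bbg(k_s)\subseteq P$. Let $\pi\colon P\to L=P/R_u(P)$ be the projection onto the Levi quotient. The image $\pi(V)$ is a unipotent subgroup of the reductive group $L$, and the crucial point (discussed below) is that each of its elements is again $k_s$-embeddable in $L$. If $P\neq\bbg$, then $\dim L<\dim\bbg$, so by induction $\pi(V)\subseteq R_u(Q')$ for some parabolic $Q'$ of $L$; its preimage $Q=\pi^{-1}(Q')$ is a parabolic of $\bbg$ with $R_u(Q)=\pi^{-1}(R_u(Q'))$, whence $V\subseteq R_u(Q)$ and $V$ is embeddable. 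It remains to supply the two facts that keep the induction running.

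The first of these, and the main obstacle, is to show that a nontrivial unipotent subgroup all of whose elements are $k_s$-embeddable is contained in a proper parabolic, equivalently that its canonical parabolic cannot be all of $\bbg$. This is exactly where the elementwise hypothesis is indispensable: in characteristic $p$ there exist unipotent elements lying in the unipotent radical of no $k_s$-parabolic, and the Borel--Tits canonical-parabolic construction is the device that, under the standing hypothesis, produces a proper parabolic containing $V$ and rules out such pathologies. The second fact is that $k_s$-embeddability descends along $\pi$ to $\pi(V)$ inside $L$; here the normalizer containment $N_{\bbg}(V)\cap\bbg(k_s)\subseteq P$ furnished by Theorem~\ref{ks-embeddable} is the natural lever, allowing one to transport a $k_s$-parabolic witnessing the embeddability of an element $u\in U$ to one witnessing the embeddability of $\pi(u)$ in $L$. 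Once both points are established the induction yields $k_s$-embeddability of $V$, and a final application of Theorem~\ref{ks-embeddable} completes the passage to $k$-embeddability.
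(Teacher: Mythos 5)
First, a point of comparison: the paper offers no proof of this statement at all --- it is quoted, with citation, from Borel--Tits \cite[proposition 3.6]{BT} as background. So there is no internal argument to measure yours against; I can only assess the proposal on its own terms.

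On its own terms it has a genuine gap, and the overall structure is circular. Over $k_s$, ``$V$ is $k_s$-embeddable'' means precisely that some parabolic \emph{defined over} $k_s$ contains $V$ in its unipotent radical. The Borel--Tits canonical parabolic $P(V)$ always exists as a $k$-closed subgroup with $V\subseteq R_u(P(V))$ and $N_{\bbg}(V)\subseteq P(V)$, and it is automatically proper once $V\neq 1$ (a reductive group has trivial unipotent radical, so $P(V)=\bbg$ would force $V=1$); the entire difficulty of the proposition is whether $P(V)$ --- or any parabolic containing $V$ in its unipotent radical --- is defined over $k_s$ rather than merely $k_s$-closed. Your induction opens by asserting that $P(V)$ ``is defined over $k_s$''; that assertion is equivalent to the conclusion, and once it is granted the induction is superfluous. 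Relatedly, the parenthetical claim that every parabolic is ``automatically defined over $k_s$'' is false for parabolics of $\bbg_{\bar k}$ in characteristic $p$ --- this failure is exactly why non-$k_s$-embeddable unipotent elements exist, as you yourself note two paragraphs later --- so the problem does not become easier after passing to $k_s$. Finally, the two ``facts that keep the induction running'' (that the elementwise hypothesis produces a proper $k_s$-rational parabolic, and that $k_s$-embeddability of elements descends to their images in the Levi quotient) are precisely where all the work lies, and for each you offer only the declaration that the canonical-parabolic construction, respectively the normalizer containment, is ``the device'' or ``the natural lever.'' No mechanism is supplied for converting a hypothesis on individual elements of $U$ into rationality of a single parabolic for the whole group, which is the heart of Borel--Tits' result. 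As it stands this is a plan for a proof, not a proof.
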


Later, J.~Tits~\cite{T} investigated all the pairs
$(\bbg,k)$ for which any $k$-closed unipotent subgroup is
$k$-embeddable. In view of theorems~\ref{ks-embeddable}
and~\ref{unipelem}, he treated elements of $\bbg(k)$ in
the $k$-split case. He proved that:

\begin{thm}~\cite[corollary 2.6]{T}~\label{good}
Let $\bbg$ be a quasi-simple connected $k$-group. Then, if $p$ is
not a torsion prime for $\bbg$, all unipotent elements of
$\bbg(k)$ are $k$-embeddable.
\end{thm}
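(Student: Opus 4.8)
Every element of $\langle u\rangle$ is unipotent, so by Theorem~\ref{unipelem} it is enough to show that an arbitrary unipotent $u\in\bbg(k)$ is $k_s$-embeddable; I therefore work over $k_s$, where $\bbg$ splits. By the Borel--Tits construction recalled above there is, over $\bar k$, a parabolic canonically attached to the (finite, hence closed) group $\langle u\rangle$ containing $\langle u\rangle$ in its unipotent radical. Over $\bar k$, therefore, $u$ is always embeddable; the whole content of the theorem is the \emph{rationality} of such a parabolic, and after the reduction I only need one defined over $k_s$.

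\textbf{Making the parabolic rational.} I would exhibit it dynamically. For a cocharacter $\lambda\colon\mathbb{G}_m\to\bbg$ over $k_s$ the subgroup $P(\lambda)=\{g:\lim_{s\to0}\lambda(s)g\lambda(s)^{-1}\text{ exists}\}$ is automatically a $k_s$-parabolic with unipotent radical $\{g:\lim_{s\to0}\lambda(s)g\lambda(s)^{-1}=1\}$; if in addition $\lim_{s\to0}\lambda(s)u\lambda(s)^{-1}=1$, then $u\in R_u(P(\lambda))$ and $P(\lambda)$ is a $k_s$-parabolic that embeds $u$. So the problem collapses to producing, over $k_s$, a single cocharacter $\lambda$ contracting $u$ to the identity. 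When $p$ is good this can be done through $\sl_2$-theory: transfer $u$ to a nilpotent $X\in\gfr=\Lie(\bbg)$ by a Springer isomorphism and take an associated cocharacter $\lambda$ (produced from an $\sl_2$-triple containing $X$ once $p$ is large enough), for which $\Ad(\lambda(s))X=s^2X$ and hence $\lambda$ contracts $u$.

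\textbf{Where the hypothesis bites, and the main obstacle.} The Springer isomorphism, the triple and the contracting $\lambda$ can all be arranged over $\bar k$ unconditionally, so the only possible failure is the descent of $\lambda$ to $k_s$; equivalently, whether the canonical parabolic is defined over the possibly imperfect field $k_s$ or only over a purely inseparable extension of it. Removing this inseparability is exactly what the hypothesis achieves: for $p$ not a torsion prime a contracting cocharacter can be chosen over $k_s$ for every unipotent class. The condition is sharper than goodness --- for simply connected $\bbg$ the torsion primes are among the bad primes, so some bad primes stay admissible, and for non-simply-connected $\bbg$ it additionally excludes the primes dividing the order of the fundamental group, which are precisely those carrying genuine non-embeddable unipotent elements. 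I expect the crux to be this rationality statement at the bad-but-not-torsion primes, where $\sl_2$-theory is no longer available: there I would reduce through Levi $k_s$-subgroups and the Bala--Carter/Pommerening description to the distinguished unipotent classes and check class by class that a $k_s$-rational contracting cocharacter exists, the torsion-prime hypothesis being what excludes the pathological classes whose associated cocharacter fails to descend.
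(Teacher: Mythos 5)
This statement is not proved in the paper at all: it is imported verbatim from Tits~\cite{T} (Corollary 2.6) as background, so there is no internal proof to measure your argument against, and anything you write is necessarily a reconstruction of Tits's proof or a new one. Your framing is sound up to a point: the reduction to $k_s$-embeddability of the single element $u$ via Theorems~\ref{ks-embeddable} and~\ref{unipelem} is correct; over $\bar k$ every unipotent element does lie in the unipotent radical of a Borel, so a contracting cocharacter always exists geometrically; and you correctly locate the entire difficulty in the purely inseparable descent problem, i.e.\ in producing a contracting cocharacter rational over $k_s$.

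The gap is that this last step --- which is the whole theorem --- is asserted rather than proved. The sentence ``for $p$ not a torsion prime a contracting cocharacter can be chosen over $k_s$ for every unipotent class'' is a paraphrase of the statement to be established, and the route you propose to it (reduce to distinguished classes via Bala--Carter and ``check class by class'') is left entirely unexecuted; nor do you explain how the torsion-prime hypothesis, which is a condition on centralizers of semisimple elements and on the fundamental group rather than on the combinatorics of nilpotent orbits, would actually intervene in such a check. Even the good-characteristic case as sketched is incomplete: $\sl_2$-triples require $p$ large relative to the Coxeter number, not merely good, and --- more seriously --- over the separably closed but imperfect field $k_s$ the orbit map $g\mapsto gug^{-1}$ need not be separable, so $u$ need not be $\bbg(k_s)$-conjugate to a standard representative of its geometric class; the existence of a $k_s$-rational associated cocharacter is itself a nontrivial rationality theorem (of the kind proved much later by McNinch), not something that transfers automatically from $\bar k$. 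For comparison, Tits's argument in~\cite{T} does not go through cocharacters at all: in essence it reduces to unipotent elements of order $p$ and shows that, when $p$ is not a torsion prime, such an element lies in a smooth connected unipotent $k$-subgroup (the image of a $k$-homomorphism $\bbg_a\to\bbg$), to which the Borel--Tits Theorem~\ref{ks-embeddable} then applies. None of that mechanism is present in your sketch, so as written the proposal identifies the right obstacle but does not overcome it.
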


Recently, P.~Gille~\cite{Gi} has completed the picture:
\begin{thm}~\cite[theorem 2]{Gi}~\label{unip}
Assume $\char(k)=p>0$, $[k:k^p]\le p$, and $\bbg$
is a semisimple simply connected $k$-group. Then every unipotent
subgroup of $\bbg(k)$ is $k$-embeddable.
\end{thm}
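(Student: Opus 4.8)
The plan is to combine the two Borel--Tits reduction theorems with Tits' torsion-prime dichotomy, and to concentrate the real work on the finitely many torsion primes, where the hypothesis $[k:k^p]\le p$ is what does the job. First I would reduce from subgroups to single elements. By Theorem~\ref{unipelem}, a unipotent subgroup $U\subseteq\bbg(k)$ is $k$-embeddable as soon as each of its elements is $k_s$-embeddable, so it suffices to show that every unipotent $u\in\bbg(k)$ is $k_s$-embeddable. I would then base change to $K:=k_s$: separable extensions do not change the $p$-degree, so $[K:K^p]=[k:k^p]\le p$; a $K$-parabolic coincides with a $K_s$-parabolic; and over the separably closed field $K$ the simply connected group $\bbg$ is split. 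Thus it is enough to prove the a priori stronger statement that, for $\bbg$ split semisimple simply connected over a separably closed $K$ with $[K:K^p]\le p$, every unipotent element of $\bbg(K)$ lies in $R_u(\bbp)$ for some $K$-parabolic $\bbp$. Since a split simply connected group is a direct product of its (now absolutely almost simple, split) factors and embeddability is checked factor by factor, I may further assume $\bbg$ is absolutely almost simple.

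Next I would dispose of the generic case using Tits' theorem. By Theorem~\ref{good}, if $\char(K)=p$ is not a torsion prime for $\bbg$, then every unipotent element of $\bbg(K)$ is already $K$-embeddable, with no hypothesis on $K$ at all. This leaves only the finitely many torsion primes of $\bbg$: explicitly, $p\in\{2,3\}$ for the non-simply-laced and type $\D$ classical groups and for the exceptional types, together with $p=5$ occurring solely for type $\E_8$. These are exactly the cases not covered by Theorems~\ref{ks-embeddable}--\ref{good}, and they are where the imperfection of $K$ can genuinely obstruct embeddability.

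The main obstacle is precisely this torsion list, and it is here that $[K:K^p]\le p$ becomes indispensable. For good $p$ one attaches to a unipotent $u$ an associated cocharacter (via $\sl_2$-theory, or the Kempf--Rousseau optimal destabilizing cocharacter), which is automatically defined over the field of definition of $u$ and realizes $u$ inside the unipotent radical of the corresponding parabolic; for torsion $p$ this machinery fails, and the Borel--Tits canonical parabolic $P(\langle u\rangle)$ formed over $\bar K$ need not descend to $K$. My plan for these cases is a type-by-type analysis: given $u$, either locate a proper $K$-defined reductive subgroup $\bbh\subseteq\bbg$ of maximal rank (a subsystem subgroup) that contains $u$ and for which $p$ is no longer a torsion prime, so that the already-settled case applies to $\bbh$, or else construct an explicit $K$-parabolic directly. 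The point where the hypothesis cannot be avoided is the descent of the instability data singling out the canonical parabolic: this data is a priori defined only after adjoining $p$-th roots, and the bound $[K:K^p]\le p$ limits the resulting failure of $K$-rationality to a single $p$-th-root's worth of information, measured in $K/K^p$, which the analysis then shows is actually defined over $K$.

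I expect this descent of the canonical parabolic through the bound $[K:K^p]\le p$ to be the crux of the entire argument; by contrast, the reductions of the first paragraph are formal consequences of the Borel--Tits theorems, and the elimination of non-torsion primes in the second paragraph is a direct citation of Tits. The residual effort is the explicit verification, case by case over the torsion list, that a suitable maximal-rank subgroup $\bbh$ (or explicit $K$-parabolic) can always be produced once the imperfection degree of $K$ is at most $p$.
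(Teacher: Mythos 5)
There is a genuine gap, and before describing it I should note that the paper does not prove this statement at all: it is quoted verbatim from P.~Gille's Duke paper (Th\'eor\`eme 2 of~\cite{Gi}), so there is no internal proof to compare against. Your formal reductions are correct and essentially reproduce the way the surrounding text of the paper itself combines Theorems~\ref{ks-embeddable}, \ref{unipelem} and~\ref{good}: passing from subgroups to elements via Theorem~\ref{unipelem}, noting that $[k_s:k_s^p]=[k:k^p]$ (a $p$-basis of $k$ stays a $p$-basis after a separable algebraic extension), reducing to a split absolutely almost simple group over a separably closed field, and discharging the non-torsion primes by Tits' theorem. (A small inaccuracy: your torsion-prime list is off --- type $\C_n$ has no torsion primes and type $\B_n$ has only $2$, though this does not affect the architecture.)

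The gap is that everything after these reductions --- which is the entire mathematical content of Gille's theorem --- is announced rather than proved. You write that for torsion $p$ one should ``either locate a proper $K$-defined maximal-rank subgroup $\bbh$ containing $u$ for which $p$ is not a torsion prime, or construct an explicit $K$-parabolic,'' and that the bound $[K:K^p]\le p$ ``limits the failure of $K$-rationality to a single $p$-th root's worth of information, which the analysis then shows is actually defined over $K$.'' No such analysis is given: you do not exhibit the instability datum, do not explain how the Borel--Tits canonical parabolic of $\langle u\rangle$ (a priori defined only over a purely inseparable extension) descends, and do not show that a subsystem subgroup $\bbh$ over $K$ through $u$ exists. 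The subsystem route has an additional unaddressed step: even granting $u\in R_u(\bbp_{\bbh})$ for a parabolic of $\bbh$, one must still produce a parabolic of $\bbg$ whose unipotent radical contains $u$, which requires arguing that a positive system for the closed subsystem extends to one for $\Phi$. Since the hypothesis $[k:k^p]\le p$ enters the argument only at the step you have left blank, the proposal as written establishes the theorem only for non-torsion primes, i.e.\ it does not go beyond Theorem~\ref{good}.
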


 Using theorems~\ref{ks-embeddable}, ~\ref{unip} and the
fact that for any local field $F$ of characteristic $p>0$,
$[F:F^p]=p$, we have:

\begin{thm}~\label{unipotent}
Let $F=\bbf_q((t^{-1}))$, $\bbg$ be a simple simply connected
$F$-group, and $U$ be a unipotent subgroup of $\bbg(F)$. Then
there is a $F$-parabolic subgroup $\bbp$ of $\bbg$ such that:

\begin{itemize}
\item[(i)] $U\subseteq R_u(\bbp)$. \item[(ii)] $N_{\bbg}(U) \cap
\bbg(F_s) \subseteq \bbp$.
\end{itemize}
\end{thm}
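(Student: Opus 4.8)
The plan is to obtain this statement as a direct consequence of the two deep cited results, Gille's Theorem~\ref{unip} and the Borel--Tits Theorem~\ref{ks-embeddable}, once the field-theoretic hypothesis has been verified. First I would record the arithmetic fact that $F=\bbf_q((t^{-1}))$ satisfies $[F:F^p]=p$: since $\bbf_q$ is perfect we have $\bbf_q^p=\bbf_q$, and with uniformizer $\pi=t^{-1}$ one has $\pi^p=t^{-p}$, so $\{1,\pi,\dots,\pi^{p-1}\}$ is an $F^p$-basis of $F$. In particular $[F:F^p]=p\le p$ and $\char(F)=p>0$, so—recalling that $\bbg$ is simple simply connected, hence semisimple simply connected over $F$—all the hypotheses of Gille's theorem are met.

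Applying Theorem~\ref{unip} to the unipotent subgroup $U\subseteq\bbg(F)$ then shows that $U$ is $F$-embeddable: there is an $F$-parabolic subgroup $\bbq$ of $\bbg$ with $U\subseteq R_u(\bbq)$. To feed this into the Borel--Tits theorem I would pass to the Zariski closure $\bar U$ of $U$. Because $R_u(\bbq)$ is Zariski closed and contains $U$, it also contains $\bar U$, so $\bar U$ is again $F$-embeddable and hence a fortiori $F_s$-embeddable. Moreover $\bar U$ is the Zariski closure of a set of unipotent $F$-rational points, so it is an $F$-closed unipotent subgroup of $\bbg$.

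Now $\bar U$ satisfies exactly the hypotheses of Theorem~\ref{ks-embeddable} with $k=F$: it is $F$-closed and $F_s$-embeddable. That theorem then supplies an $F$-parabolic subgroup $\bbp$ with $\bar U\subseteq R_u(\bbp)$ and $N_{\bbg}(\bar U)\cap\bbg(F_s)\subseteq\bbp$. Since $U\subseteq\bar U$, part (i) is immediate. For part (ii), I would note that any element normalizing $U$ also normalizes its Zariski closure—conjugation is a Zariski-continuous automorphism, so $g\bar U g^{-1}=\overline{gUg^{-1}}=\bar U$—whence $N_{\bbg}(U)\subseteq N_{\bbg}(\bar U)$; intersecting with $\bbg(F_s)$ and applying the Borel--Tits containment yields $N_{\bbg}(U)\cap\bbg(F_s)\subseteq\bbp$, as required.

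I expect no genuine obstacle here, since all the substantive mathematics is already packaged in the cited theorems of Gille and of Borel--Tits. The only point requiring care is reconciling the abstract-subgroup formulation of the statement with the $k$-closed-subgroup hypothesis of Theorem~\ref{ks-embeddable}; this is precisely the function of the Zariski closure, and one must check that the normalizer containment descends from $\bar U$ to $U$—which it does, in the favorable direction $N_{\bbg}(U)\subseteq N_{\bbg}(\bar U)$.
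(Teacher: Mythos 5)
Your proposal is correct and takes essentially the same route as the paper, which obtains the theorem in one line by combining Gille's theorem (applicable because $[F:F^p]=p$ for the local field $F$ of characteristic $p$) with the Borel--Tits proposition~\ref{ks-embeddable}. Your passage to the Zariski closure of $U$ and the observation that $N_{\bbg}(U)\subseteq N_{\bbg}(\bar U)$ merely make explicit the details the paper leaves implicit.
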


\textbf{\textit{Chevalley groups.}} Here we will recall some of
the well-known facts about absolutely almost simple algebraic
groups and set some of the notations which are used in this article.

Let $\bbg$ be an absolutely almost simple $k$-split $k$-group,
$\bbt$ a maximal $k$-split torus,
$\Phi=\Phi(\bbg,\bbt)$  the root system of $\bbg$ with respect to
$\bbt$, $\Delta$ a set of simple roots of $\Phi$, $\bbb$ the
Borel subgroup of $\bbg$ corresponded to $\Delta$,
$X(\bbt)=\Hom(\bbt,\bbg_m)$ the set of characters of $\bbt$,
and $P_r$  the subgroup of $X(\bbt)$ generated by the roots
$\Phi$. Killing form provides an inner product on the Euclidean
space $E=X(\bbt)\otimes_{\bbz}\bbr$. For any pair of roots
$\alpha$ and $\beta$, we denote the reflection with respect
to $\alpha$ by $\sigma_{\alpha}$, and we have
$\sigma_{\alpha}(\beta)=\beta-\langle\beta,\alpha\rangle\alpha$,
where
$\langle\beta,\alpha\rangle=2\frac{(\beta,\alpha)}{(\alpha,\alpha)}$
is an integer. Hence we have:
$$P_r\subseteq X(\bbt)\subseteq P=\{\theta\in
X(\bbt)\otimes_{\bbz}\bbq\hspace{.05in}|
\langle\theta,\alpha\rangle\in\bbz\hspace{.1in}\mbox{for any
root}\hspace{.05in} \alpha\}.$$ It is well-known that $\bbg$ is
simply connected (resp. adjoint) if and only if $X(\bbt)=P$
(resp.$X(\bbt)=P_r$) . So $\bbg$ is simply connected if and only
if $X(\bbt)$ has a base $\lambda_{\alpha}$ such that
$\langle\beta,\lambda_{\alpha}\rangle=\delta_{\beta,\alpha}$ for
any simple roots $\alpha$ and $\beta$, where
$\delta_{\beta,\alpha}$ is the Kronecker delta i.e. it is one if
$\alpha=\beta$ and zero otherwise. We may look at $\Ad(\bbg)$ as the group of inner
automorphisms of $\bbg$, and $\Ad(\bbg)(k)$ may be looked at as a
subgroup of the group of automorphisms of $\bbg(k)$. For any
$\chi\in\Hom(P_r,k^*)$ we get an automorphism of $\bbg(k)$ that we
shall denote by $h(\chi)$.

 By the classification of absolutely almost simple
groups, we know the type of the root system determines the group
up to isogeny. Each root system has a unique highest root. We
shall denote the biggest coefficient appearing in the highest root
of $\bbg$ by $l(\bbg)$ i.e. $l(\bbg)$ is 1 (resp. 2,2,2,3,4,6,4,3)
when $\bbg$ is of type A (resp. B, C, D, $\E_6$, $\E_7$, $\E_8$,
$\F_4$, and $\G_2$).

 To any subset $\Psi$ of $\Delta$, we associate a
$k$-parabolic $\bbp_{\Psi}$ which contains $\bbt$, and  any negative root appearing in whose
Lie algebra is a linear combination of elements of $\Phi\setminus\Psi$. We denote the opposite
parabolic of a given one $\bbp$ by $\bbp^-$.

For any root $\alpha$, one can find a unipotent
$k$-subgroup $\bbu_{\alpha}$ of $\bbg$ whose Lie algebra is the
$\alpha$-root subspace of $\Lie(\bbg)$. Moreover, there is an
$k$-isomorphism $u_{\alpha}$ between $\bbg_a$ and $\bbu_{\alpha}$.
\\

\textbf{\textit{Number of elements of $|\bbg(\bbf_q)|$.}} If
$\bbg$ is an algebraic group defined over a finite field $\bbf_q$,
one might wonder what the number of $\bbf_q$-rational
points of $\bbg$ is. For instance, by a theorem of Lang~\cite{La},
if $\bbg_1$ and $\bbg_2$ are two $\bbf_q$-isogenous groups, then
$|\bbg_1(\bbf_q)|=|\bbg_2(\bbf_q)|$.

 It is, if $\bbg$ is a Chevalley group, 
 well-known that $|\bbg(\bbf_q)|$ can be calculated by the
formula provided by the following theorem:

\begin{thm}~\cite[theorems 8.6.1,10.2.3]{Ca}~\label{|G(F)|} Let
$\bbg$ be a rank $r$ Chevalley group; then
$$|\bbg(\bbf_q)|=q^{{(\dim\bbg-r)}/2}\cdot\prod_{i=1}^r
(q^{m_i+1}-1),$$ where $m_1,\cdots,m_r$ are as follows:

$$\begin{array}{ll} & m_1,m_2,\cdots, m_r\\
\A_r & 1,2,\cdots,r\\
\B_r & 1,3,\cdots,2r-1\\
\C_r & 1,3,\cdots,2r-1\\
\D_r & 1,3,\cdots,2r-3,r-1\\
\G_2 & 1,5\\
\F_4 & 1,5,7,11\\
\E_6 & 1,4,5,7,8,11\\
\E_7 & 1,5,7,9,11,13,17\\
\E_8 & 1,7,11,13,17,19,23,29
\end{array}$$
\end{thm}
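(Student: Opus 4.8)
The plan is to reduce the count to two classical inputs: the Bruhat decomposition of $\bbg(\bbf_q)$ and the factorization of the Poincaré polynomial of the Weyl group $W=W(\bbg,\bbt)$ in terms of its exponents. Write $\Phi^+$ for the set of positive roots determined by $\Delta$, set $N=|\Phi^+|$, and let $\bbu=\prod_{\alpha\in\Phi^+}\bbu_\alpha$ be the unipotent radical of $\bbb$. First I would compute the order of the Borel subgroup. Since each root subgroup $\bbu_\alpha$ is $\bbf_q$-isomorphic to $\bbg_a$, one has $|\bbu(\bbf_q)|=q^N$, and since the maximal split torus $\bbt$ is $\bbf_q$-isomorphic to $\bbg_m^r$ one has $|\bbt(\bbf_q)|=(q-1)^r$; hence $|\bbb(\bbf_q)|=q^N(q-1)^r$. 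The bookkeeping identity that matters here is $\dim\bbg=r+2N$, coming from the root space decomposition of $\Lie(\bbg)$, so that $N=(\dim\bbg-r)/2$ agrees with the exponent of $q$ in the statement.

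Next I would invoke the Bruhat decomposition $\bbg(\bbf_q)=\bigsqcup_{w\in W}\bbb(\bbf_q)\,w\,\bbb(\bbf_q)$, together with the standard fact that the number of left $\bbb(\bbf_q)$-cosets inside a single double coset $\bbb(\bbf_q)\,w\,\bbb(\bbf_q)$ is $q^{\ell(w)}$, where $\ell(w)$ denotes the length of $w$ with respect to the simple reflections $\{\sigma_\alpha\}_{\alpha\in\Delta}$. Summing over $W$ then gives
$$|\bbg(\bbf_q)|=\sum_{w\in W}|\bbb(\bbf_q)|\,q^{\ell(w)}=q^N(q-1)^r\sum_{w\in W}q^{\ell(w)},$$
so that everything comes down to evaluating the Poincaré polynomial $\sum_{w\in W}q^{\ell(w)}$.

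The heart of the argument, and the step I expect to be the main obstacle, is the factorization
$$\sum_{w\in W}q^{\ell(w)}=\prod_{i=1}^r\frac{q^{m_i+1}-1}{q-1},$$
where $m_1,\dots,m_r$ are the exponents of $W$. The cleanest route is through invariant theory on $E=X(\bbt)\otimes_{\bbz}\bbr$: by Chevalley's theorem the ring of $W$-invariant polynomials on $E$ is itself a polynomial ring on $r$ homogeneous generators, whose degrees $d_i=m_i+1$ are invariants of $W$; computing the Hilbert series of the invariant ring (via Molien's formula) and dividing it into that of the full polynomial ring gives the Hilbert series $\prod_{i=1}^r(1+q+\cdots+q^{d_i-1})$ of the coinvariant algebra, which is in turn identified with $\sum_{w\in W}q^{\ell(w)}$. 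Substituting this into the previous display, the factors of $(q-1)^r$ cancel and one obtains
$$|\bbg(\bbf_q)|=q^{(\dim\bbg-r)/2}\prod_{i=1}^r(q^{m_i+1}-1).$$

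Finally I would read off the exponents type by type. For each irreducible root system the degrees $d_i$ of the basic invariants are classical --- for $\A_r$ they are $2,3,\dots,r+1$, giving $m_i=1,\dots,r$, and the remaining types are obtained similarly --- which reproduces the tabulated values of $m_1,\dots,m_r$ and completes the proof. The only genuine difficulty is the invariant-theoretic factorization of the Poincaré polynomial; the remaining inputs are bookkeeping, and the independence of the answer from the isogeny type of $\bbg$ (the torus count $(q-1)^r$ being insensitive to it) is in any case guaranteed a posteriori by Lang's theorem recalled above.
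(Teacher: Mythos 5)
The paper gives no proof of this statement---it is quoted verbatim from Carter's book---and your argument is exactly the one underlying the two cited theorems there: the Bruhat decomposition reduces the count to $|\bbb(\bbf_q)|\sum_{w\in W}q^{\ell(w)}$ with $|\bbb(\bbf_q)|=q^N(q-1)^r$ and $N=(\dim\bbg-r)/2$, and the Chevalley--Solomon factorization of the Poincar\'e polynomial of $W$ through the degrees $d_i=m_i+1$ of the basic invariants yields the product formula after the $(q-1)^r$ cancels. Your bookkeeping and the table of exponents are correct, so this is essentially the same approach as the source the paper relies on.
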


\textbf{\textit{Congruence subgroups.}} Let $F$ be a
nonarchimedean local field, $\o$ its valuation ring, $\pi$ a
uniformizing element, and $\bbf_q\simeq \o/\pi\o$ its residue
field. We shall denote the reduction
 map modulo $\pi^l$ from
$\bbg(\o)$ to $\bbg(\o/\pi^l\o)$ by $\xi_l$. Its kernel will be
called the $l^{th}$ congruence subgroup, and will be denoted by
$G_l$. Pre-image of $\bbb(\bbf_q)$ under $\xi_1$ is the stabilizer
of a chamber in the Bruhat-Tits building, such a subgroup is
called an Iwahori subgroup, and we shall denote it by $\cal{I}$.
By virtue of the following
easy proposition we can describe $h(\chi)(G_l)$ for a given $\chi\in\Hom(P_r,F^*)$. 
\begin{prop}~\label{cong}
Let $\bbg$ be a Chevalley group. Then the $l^{th}$ congruence
subgroup of $\bbg$ is topologically generated by $u_{\beta}(x)$ for $\beta\in\Phi$ and $x\in \pi^l\o$, and $T_l$ the $l^{th}$ congruence
 subgroup of $\bbt$.
\end{prop}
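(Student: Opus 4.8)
The plan is to exploit the open Bruhat (big) cell decomposition of the split group $\bbg$ and to show that every element of $G_l$ splits, compatibly with the reduction $\xi_l$, into factors lying in the congruence subgroups of the torus and of the two opposite maximal unipotent subgroups. Write $R_u(\bbb)$ and $R_u(\bbb^-)$ for the maximal unipotent subgroups attached to $\bbb$ and to the opposite Borel $\bbb^-$, and let $\mathcal C$ denote the big cell, i.e.\ the image of the multiplication morphism $\mu\colon R_u(\bbb^-)\times\bbt\times R_u(\bbb)\to\bbg$. For a Chevalley group scheme over $\bbz$ the map $\mu$ is an open immersion defined over $\bbz$; hence $\mathcal C$ is an open subscheme containing the identity, and the three projections $p^-\colon\mathcal C\to R_u(\bbb^-)$, $p_0\colon\mathcal C\to\bbt$, $p^+\colon\mathcal C\to R_u(\bbb)$ inverting $\mu$ are morphisms defined over $\bbz$, hence over $\o$.

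First I would dispose of the easy inclusion: each proposed generator lies in $G_l$, since $u_\beta(x)$ with $x\in\pi^l\o$ maps to $u_\beta(0)=1$ under $\xi_l$, and $T_l\subseteq G_l$ by definition; thus the closed subgroup they generate is contained in $G_l$. For the reverse inclusion, take $g\in G_l$. As $l\ge 1$ we have $\xi_1(g)=1$, so the closed point of $\operatorname{Spec}\o$ maps under $g$ into $\mathcal C(\bbf_q)$; since $\mathcal C$ is open and $\o$ is local, $g\in\mathcal C(\o)$ and we may write $g=u^-\,t\,u^+$ with $u^-=p^-(g)$, $t=p_0(g)$, $u^+=p^+(g)$. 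Because $p^-,p_0,p^+$ are $\o$-morphisms they are functorial for reduction, so $\xi_l(u^-)=p^-(\xi_l(g))=p^-(1)=1$, and likewise $\xi_l(t)=1$ and $\xi_l(u^+)=1$. Hence $u^-\in R_u(\bbb^-)\cap G_l$, $t\in T_l$, and $u^+\in R_u(\bbb)\cap G_l$.

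It then remains to decompose the two unipotent factors into root coordinates. Fixing an ordering of the positive roots $\Phi^+$, the product of the root isomorphisms $u_\beta$ gives an isomorphism of $\o$-schemes $\bba^{|\Phi^+|}\xrightarrow{\ \sim\ }R_u(\bbb)$ defined over $\bbz$, under which an $\o$-point lies in $G_l$ exactly when each of its root coordinates lies in $\pi^l\o$ (reduction being computed coordinate-wise). Thus $u^+=\prod_{\beta\in\Phi^+}u_\beta(x_\beta)$ with every $x_\beta\in\pi^l\o$, and symmetrically $u^-$ is such a product over the negative roots. Combining the three steps, every $g\in G_l$ is a finite product of elements $u_\beta(x)$ with $\beta\in\Phi$, $x\in\pi^l\o$ and of an element of $T_l$, which yields the asserted (in fact abstract, and a fortiori topological) generation.

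The step requiring the most care is the second one: verifying that an element of $G_l$ genuinely lands in the big cell over $\o$ and that the big-cell projections commute with reduction modulo $\pi^l$. Both facts rest on $\mu$ and its inverse being morphisms of the Chevalley group scheme over $\bbz$, so that base change to $\o$ and reduction modulo $\pi^l$ are functorial; once this is secured, the passage to root subgroups is routine bookkeeping with the Chevalley parametrization. Alternatively, one could argue through the filtration $G_l\supseteq G_{l+1}\supseteq\cdots$, identifying each graded quotient $G_{l+m}/G_{l+m+1}$ with the $\bbf_q$-points of $\Lie(\bbg)$ and observing that the images of the subgroups $u_\beta(\pi^{l+m}\o)\subseteq u_\beta(\pi^l\o)$ together with $T_{l+m}\subseteq T_l$ span it; a pro-$p$ lifting argument then delivers topological generation directly, which accounts for the wording of the statement.
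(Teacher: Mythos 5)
Your argument is correct, but it takes a genuinely different route from the paper's. The paper sets $H_l=\overline{\langle T_l,\,u_\beta(x)\mid\beta\in\Phi,\ x\in\pi^l\o\rangle}$, observes that $\xi_{l+1}(H_l)=\xi_{l+1}(G_l)$, i.e.\ $H_lG_{l+1}=G_l$, and then inducts on $m$ using $H_m\subseteq H_l$ to obtain $H_lG_m=G_l$ for all $m\ge l+1$; intersecting over $m$ and using that $H_l$ is closed gives $H_l=G_l$. That is precisely the filtration/pro-$p$ lifting argument you sketch in your final paragraph as an ``alternative'': it needs only surjectivity onto each graded quotient $G_m/G_{m+1}$ plus closedness of $H_l$, and it delivers exactly the topological generation asserted. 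Your main argument --- placing every $g\in G_l$ in the big cell by the openness of the product map $R_u(\bbb^-)\times\bbt\times R_u(\bbb)\to\bbg$ over $\o$ together with the locality of $\o$, and then using the $\bbz$-rationality of the inverse projections to see that each factor again reduces to the identity modulo $\pi^l$ --- is also valid and in fact yields a stronger conclusion: every element of $G_l$ is a product of boundedly many of the stated generators, so $G_l$ is abstractly, not merely topologically, generated by them. The price is the appeal to the fact that the big cell is an open immersion of the Chevalley group scheme over $\bbz$ (so that the decomposition and its projections commute with reduction), whereas the paper's induction uses nothing beyond the structure of the successive quotients.
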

\begin{proof}
Let $H_l=\overline{\langle T_l, u_{\beta}(x) | \beta\in\Phi,
x\in\pi^l\o \rangle}$. Clearly
 $H_l \subseteq G_l$ and the image of $H_l$ under $\xi_{l+1}$ is the same as $\xi_{l+1}(G_l)$. Hence for any $l\geq1$,
 $H_lG_{l+1}=G_l$. By induction on $m$, we shall show that $H_lG_m=G_l$ for any $m\geq l+1$. We
 have shown it for $m=l+1$. By induction hypothesis, $H_lG_m=G_l$. On the other hand,
 $G_m=H_mG_{m+1}$ and $H_m \subseteq H_l$ for $m\geq l+1$.
 Thus $$G_l=H_lG_m=H_lH_mG_{m+1}=H_lG_{m+1},$$ and so
 $G_l=\bigcap_{l\leq m}H_lG_m=\overline{H_l}=H_l$.
\end{proof}

 Occasionally we will denote $\bbg(\o)$ by $G_0$. Though we
are not going to use it but it is worth mentioning that with an
appropriate metric on $G$ we may look at the congruence subgroups
as its balls centered at the identity.

\section{Proof of theorem A.}

For each lattice $\Gamma$, one can define a pair of numbers
$p_{\Gamma}=(n_{\Gamma},m_{\Gamma})$ where $n_{\Gamma}=\min\{l\in
\bbn|\Gamma\cap G_l=(1)\}$ and $m_{\Gamma}=|G_{n_{\Gamma}-1}\cap
\Gamma|$. Consider lexicographic order on
$\Sigma_{\Gamma}=\{p_{a(\Gamma)}|a\in \Ad(\bbg)(F)\}$.
Replacing $\Gamma$ with $a_0(\Gamma)$ for a suitable
$a_0\in \Ad(\bbg)(F)$, without lose of generality, one can
assume that $p_{\Gamma}$ is minimal in $\Sigma_{\Gamma}$. We will
show that $n_{\Gamma} \le l(\bbg)$. Assume, if possible, that 
$n_{\Gamma} > l(\bbg)$.

 Since $G_1$ is a pro-$p$ group, $\Gamma \cap G_1$ is a finite
$p$-group and so it is a unipotent subgroup. Hence by
theorem~\ref{unipotent}, there is a parabolic $\bbp$ defined over
$F$ such that $\Gamma \cap G_1\subseteq R_u(\bbp)(F)$ and
$N_G(\Gamma\cap G_1)\subseteq \bbp(F)$, and in particular $\Gamma
\cap G_0 \subseteq \bbp(F)$. By the structural theory of reductive
groups $\bbp$ is conjugate to a standard parabolic $\bbp_{\Xi}$.
Since $\bbg(F)=\bbg(\o)\bbp(F)$, $\bbp=\bbp_{\Xi}^x$ for some
$x\in G_0$ and $\Xi\subseteq \Delta$. It is clear that
$p_{\Gamma}=p_{\Gamma^{x^{-1}}}$, and so without loss of
generality we can assume that $\bbp=\bbp_{\Xi}$, and in particular
it is defined over $\bbf_p$. Now for any $\alpha \in \Xi$, we
define $\theta_{\alpha}=\theta \in \Hom(P_r,F^*)$ such that
$\theta(\alpha)=t^{-1}$ and $\theta(\alpha')=1$ for any
$\alpha'\in \Delta\setminus\{\alpha\}$. Now let us state three properties of $h(\theta)$:

\begin{itemize}
\item[{\bf (P1)}] $h(\theta)(u_{\beta}(x))=u_{\beta}(\theta(\beta)x)$ for
any $\beta \in \Phi$ and $x\in F$.
\item[{\bf (P2)}] Using (P1)
and the description of the congruence subgroups given in the
proposition~\ref{cong}, one has $h(\theta)(G_l) \subseteq
G_{l-l(\bbg)}$ for any $l\ge l(\bbg)$. 
\item[{\bf (P3)}] For any $l\ge
l(\bbg)$, the same argument as in (P2) gives us
$\xi_l(h(\theta)(G_l)) \subseteq
R_u(\bbp_{\alpha}^-)(\o/t^{-l}\o)$ and
$\xi_{l+1}(h(\theta)(G_l))\subseteq
\bbp_{\alpha}^-(\o/t^{-l-1}\o)$.
\end{itemize}

 Now, we consider $\Lambda=h(\theta)^{-1}(\Gamma)$, and divide the argument into several steps. 
\\

\noindent
\textbf{\textit{First Step:}} $n_{\Gamma}=n_{\Lambda}$.

\textit{Proof.} By the choice of $\Gamma$, we know that
$n_{\Lambda}\geq n_{\Gamma}$. So it is enough to show that
$n_{\Lambda}\le n_{\Gamma}=n$, i.e. $\Lambda\cap G_{n}=(1)$. Using
the contrary assumption and (P2),
we can see that $\Gamma\cap h(\theta)(G_n)$ is a subset of $\Gamma
\cap G_1$, and so $\Gamma \cap h(\theta)(G_n)$ is a subset of
$R_u(\bbp)(\o)$. However by (P3),
$\xi_{n}(h(\theta)(G_n))\subseteq
R_u(\bbp_{\alpha}^-)(\o/t^{-n}\o)$. Hence 
$\xi_{n}(\Gamma\cap h(\theta)(G_n))=1$, as we wished.
\\

\noindent
\textbf{\textit{Second Step:}} $\Gamma\cap G_{n-1}=\Gamma \cap
h(\theta)(G_{n-1})$.

\textit{Proof.} By the first step and the choice of $\Gamma$,
$|\Lambda\cap G_{n-1}|\ge |\Gamma\cap G_{n-1}|$. On the other
hand, by the assumption that $n>l(\bbg)$, and (P2), $\Gamma \cap h(\theta)(G_{n-1})$ is a subset of
$\Gamma \cap G_0$, and so it is contained in $\bbp(\o)$. Using (P3), one can conclude that
$\xi_{n-1}(h(\theta)(G_{n-1}))\subseteq
R_u(\bbp_{\alpha}^-)(\o/t^{-(n-1)}\o)$. Hence $\Gamma \cap
h(\theta)(G_{n-1})\subseteq \Gamma\cap G_{n-1}$. Therefore,
comparing the number of elements of them, we can see that the
equality holds, which completes the proof of this step.
\\

\noindent
\textbf{\textit{Final Step:}} By the previous step $\Gamma \cap
G_{n-1}=\Gamma \cap h(\theta_{\alpha})(G_{n-1})$, for any $\alpha
\in \Xi$. Thus by (P3),
$\xi_{n}(\Gamma\cap G_{n-1})\subseteq
\bbp_{\alpha}^-(\o/t^{-l}\o)$ for any $\alpha \in \Xi$. Since
$\bigcap_{\alpha\in\Xi}\bbp_{\alpha}^-=\bbp_{\Xi}^-=\bbp^-$ and
$\Gamma\cap G_{n-1}\subseteq R_u(\bbp)(\o)$, one can see that
$\Gamma$ should intersect $G_{n-1}$ trivially, which is a
contradiction.


\section{Covolume of $\bbg(\o_k({\nu_0}))$ in $\bbg(k_{\nu_0})$.}

In this section, we compute the covolume of $\bbg(\o_k({\nu_0}))$ in $\bbg(k_{\nu_0})$, where $\bbg$ is as always a simply connected Chevalley group, $k/\bbf_q$ is a global function field, and $\nu_0$ is a place of degree 1. Following Prasad's~\cite{Pr} treatment, we start with a result of Harder~\cite{Ha2} on the Tamagawa number of Chevalley groups over the global function fields, and then use strong approximation to compute the considered quantity. We should say that one can get the desired result directly from Prasad's formula. However as we are working with split groups, we decided to include the argument here.

 Since $\bbg$ is a simply connected Chevalley group, for any place $\nu$ of $k$, $P_{\nu}=\bbg(\o_{\nu})$ is a hyperspecial parahoric subgroup of $\bbg(k_{\nu})$ with smooth reduction map modulo the uniformizer $\pi_{\nu}$ to the finite group $\bbg(\bbf_{q_{\nu}})$. Hence $\omega^*_{\nu}(P_{\nu})=|\bbg(\bbf_{q_{\nu}})|/ q_{\nu}^{\dim \bbg}$. On the other hand, strong approximation implies that $\bbg(\bba_k)= \bbg(k_{\nu_0})\prod_{\nu\neq\nu_0}P_{\nu}\cdot\bbg(k)$, where $\bbg(k)$ is embedded diagonally in $\bbg(\bba_k)$. Therefore there is a fibration  $\bbg(\bba_k)/\bbg(k)\rightarrow\bbg(k_{\nu_0})/\bbg(\o_k(\nu_0))$ with fiber $\prod_{\nu\neq\nu_0}P_{\nu}$. Hence $$\omega^*_{\bba_k}(\bbg(\bba_k)/\bbg(k))=\omega^*_{\nu_0}(\bbg(k_{\nu_0})/\bbg(\o_k(\nu_0)))\cdot\prod_{\nu\neq\nu_0}\frac{|\bbg(q_{\nu})|}{q_{\nu}^{\dim\bbg}}.$$

On the other hand, by Harder's result $\omega^*_{\bba_k}(\bbg(\bba_k)/\bbg(k))=q_k^{(g_k-1)\dim\bbg}$. We also would like to normalize the Haar measure $\mu$ such that $\mu(G_1)=1$. So we should multiply everything by $q_{\nu_0}^{\dim\bbg}$, as $\omega^*_{\nu_0}(G_1)=1/ q_{\nu_0}^{\dim\bbg}$. Therefore, using theorems~\ref{zeta} and~\ref{|G(F)|}, and $q_{\nu_0}=q$, we have:
$$\begin{array}{ll}
\mu(\bbg(k_{\nu_0})/\bbg(\o_k({\nu_0})))&=q^{(g_k-1)\dim\bbg}|\bbg(\bbf_{q_{\nu_0}})|\cdot\prod_{\nu\in V_k}q_{\nu}^{\dim\bbg}/|\bbg(\bbf_{q_{\nu}})|\\ &\\
&= q^{g_k\dim\bbg}\cdot\prod_{i=1}^r(1-q^{-m_i-1})\cdot\prod_{\nu}\prod_{i=1}^r(1-q_{\nu}^{-m_i-1})^{-1}\\ &\\&=q^{g_k\dim\bbg}\cdot\prod_{i=1}^r(1-q^{-m_i-1})\cdot\prod_{i=1}^r \zeta_k(m_i+1)\\ &\\
&=q^{g_k\dim\bbg}\cdot\prod_{i=1}^rP_k(q^{-m_i-1})\cdot\prod_{i=1}^r(1-q^{-m_i})^{-1}.
\end{array}$$ 
In particular, for  $k=\bbf_q(t)$ and $\nu_0$ its ``infinite" place, we have $$\mu(\bbg(\bbf_q((t^{-1})))/\bbg(\bbf_q[t]))=\prod_{i=1}^r(1-q^{-m_i})^{-1}.$$

On the other hand, for any number $s\ge1$, by theorem~\ref{zeta}, it is easy to see that, $$q^g\cdot P_k(q^{-s})\ge (1-q^{-1/2})^{2g}.$$ 

Thus if $k/\bbf_q$ is a global function field of genus $g$, $\nu_0$ is a place of degree one,  then 
$$
\mu(\bbg(k_{\nu_0})/\bbg(\o_k({\nu_0})))\ge (q-q^{1/2})^{2g r}.$$

\begin{cor}~\label{genus}
Let $k/\bbf_q$ be a global function field of genus $g>0$, $\bbg$ a simply connected Chevalley group of rank larger than 1. If $q>2$, then $$
\mu(\bbg(k_{\nu_0})/\bbg(\o_k({\nu_0})))>2.$$ 
\end{cor}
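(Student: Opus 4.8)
The plan is to use the lower bound on the covolume established immediately before the corollary, namely
$$\mu(\bbg(k_{\nu_0})/\bbg(\o_k({\nu_0})))\ge (q-q^{1/2})^{2gr},$$
and simply verify that under the hypotheses $g>0$, $q>2$, and $r\ge 2$ the right-hand side exceeds $2$. First I would observe that since the exponent $2gr$ is a positive integer (all three quantities are positive integers with $g,r\ge 1$, in fact $r\ge 2$), it suffices to bound the base $q-q^{1/2}$ from below and to note that raising a number greater than $1$ to a positive power only increases it. So the whole matter reduces to showing $(q-q^{1/2})^{2gr}>2$.

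The key step is the analysis of the function $f(q)=q-q^{1/2}=q^{1/2}(q^{1/2}-1)$. Since $q>2$ is a prime power, the smallest admissible value is $q=3$; as $f$ is increasing for $q\ge 1$, I would check the worst case $q=3$, where $f(3)=3-\sqrt{3}\approx 1.268$. Because $f(3)>1$, and the exponent satisfies $2gr\ge 2\cdot 1\cdot 2=4$, I would conclude
$$(q-q^{1/2})^{2gr}\ge (3-\sqrt{3})^{4}>2.$$
Indeed $(3-\sqrt3)^2=12-6\sqrt3\approx 1.608$, so its square is roughly $2.59>2$. For all larger $q$ or larger $g,r$ the base is at least $3-\sqrt3>1$ and the exponent is at least $4$, so the inequality only improves, giving the strict bound $\mu(\bbg(k_{\nu_0})/\bbg(\o_k({\nu_0})))>2$.

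Since this is a direct numerical consequence of an inequality already proved, there is no real obstacle; the only point requiring care is making sure the case analysis genuinely covers the extremal configuration. The minimal case is $q=3$, $g=1$, $r=2$, and once that case clears the threshold $2$ by the monotonicity of both the base (in $q$) and the overall expression (in the exponent $2gr$), every other case follows automatically. I would therefore state the proof as: by the displayed lower bound and $g>0$, $r\ge 2$, $q\ge 3$, the exponent is at least $4$ and the base at least $3-\sqrt3$, whence $(q-q^{1/2})^{2gr}\ge(3-\sqrt3)^4>2$, which is exactly the claimed strict inequality.
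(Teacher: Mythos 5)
Your proof is correct and is essentially the paper's own argument: the corollary is stated there as an immediate consequence of the preceding inequality $\mu(\bbg(k_{\nu_0})/\bbg(\o_k({\nu_0})))\ge (q-q^{1/2})^{2gr}$, and your verification of the extremal case $q=3$, $g=1$, $r=2$ (giving $(3-\sqrt3)^4\approx 2.58>2$) together with monotonicity is exactly the intended check.
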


We also would like to have a numerical bound for the minimum covolume
of lattices in $G=\bbg(\bbf_q((t^{-1})))$. So we will show an easy
inequality which provides us the upper bound 2 for the minimum covolume of lattices in
$G$.

\begin{cor}\label{upperbound}
Let  $\bbg$ be a simply connected Chevalley group. Assume that $q\geq 4$ or $q\geq
3$ whenever $\bbg=\mathbb S{\rm pin}_4$ or not, respectively. Then
$$\vol(\bbg(\bbf_q((t^{-1})))/\bbg(\bbf_q[t]))<2.$$
\end{cor}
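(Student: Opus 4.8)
The plan is to use the explicit covolume formula established just above, namely
$$\vol(\bbg(\bbf_q((t^{-1})))/\bbg(\bbf_q[t]))=\prod_{i=1}^r(1-q^{-m_i})^{-1},$$
and to bound this product by $2$ under the stated hypothesis on $q$. Since each factor $(1-q^{-m_i})^{-1}$ exceeds $1$, the product is an increasing function of each $q^{-m_i}$, so the whole task reduces to controlling how large the product of these correction factors can be. The exponents $m_i$ are the ones tabulated in Theorem~\ref{|G(F)|}, and the key structural observation is that $m_1=1$ always (the first exponent is $1$ in every type), so the dominant factor is $(1-q^{-1})^{-1}$, while every subsequent factor has $m_i\ge 3$ (or $m_i\ge 2$ only in type $\A$, and $m_i\ge 3$ for $\D_2=\mathbb S{\rm pin}_4$), hence is extremely close to $1$.

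The main step I would carry out is to take logarithms and estimate
$$\log\prod_{i=1}^r(1-q^{-m_i})^{-1}=-\sum_{i=1}^r\log(1-q^{-m_i})=\sum_{i=1}^r\sum_{j\ge 1}\frac{q^{-jm_i}}{j}.$$
Using the elementary bound $-\log(1-x)\le x/(1-x)$ for $0\le x<1$, each factor contributes at most $q^{-m_i}/(1-q^{-m_i})$. I would then split off the $m_1=1$ term, which is the large one, and bound the tail $\sum_{i\ge 2} q^{-m_i}/(1-q^{-m_i})$ crudely using $m_i\ge 3$ together with the fact that the exponents grow (the $m_i$ are distinct odd numbers in the classical non-$\A$ types, and in type $\A$ the extra factors have $m_i\ge 2$). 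Because $q$ is at least $3$ (at least $4$ for $\mathbb S{\rm pin}_4$, whose relevant exponents are $1,3$), the tail is a geometrically decaying sum that is easily dominated by a constant, and the product stays well below $2$. The case $q\ge 4$ for $\mathbb S{\rm pin}_4=\D_2$ is singled out precisely because there $r=2$ with exponents $1,3$, and $(1-3^{-1})^{-1}(1-3^{-3})^{-1}>2$ would fail at $q=3$, whereas at $q=4$ the product is comfortably under $2$.

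Concretely, I would verify the worst cases directly: the largest product occurs when $q$ is smallest and the number of factors with small $m_i$ is largest, so I would check type $\A$ (where the exponents $1,2,\dots,r$ include the relatively large $q^{-2}$ term) and the rank-two groups at $q=3$, then argue that increasing $q$ or passing to higher rank with larger exponents only decreases or negligibly increases the product. The main obstacle is not conceptual but bookkeeping: one must confirm uniformly across all types that the accumulated correction factors, especially the infinitely many types and the type-$\A$ family of unbounded rank, never push the product to $2$. The cleanest way to close this is to bound the full product by a single type-independent expression such as $(1-q^{-1})^{-1}\prod_{m\ge 2}(1-q^{-m})^{-1}$ (allowing each exponent $m\ge 2$ to appear at most once, which dominates every actual type except that $m=1$ appears once), and to check that this majorant is below $2$ for $q\ge 3$, handling $\mathbb S{\rm pin}_4$ separately at $q\ge 4$ since its exponent pattern forces the stricter bound.
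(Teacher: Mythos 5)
Your overall strategy coincides with the paper's: start from the product formula $\vol(G/\bbg(\bbf_q[t]))=\prod_{i=1}^r(1-q^{-m_i})^{-1}$, dominate it by the type-independent infinite product $\prod_{m\ge 1}(1-q^{-m})^{-1}$ in which each exponent occurs once, and show via a logarithmic expansion that this majorant stays below $2$ for $q\ge 3$. The paper does exactly this, reducing to the estimate $\sum_{j\ge1}\frac{1}{j(3^j-1)}<\ln 2-\frac{1}{16}$, and disposes of $\mathbb S{\rm pin}_4$ by a direct computation. So the architecture of your argument is the intended one.

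However, your treatment of the exceptional case contains a genuine error. For $\bbg=\mathbb S{\rm pin}_4=\D_2\cong\mathbb{SL}_2\times\mathbb{SL}_2$, the exponents read off from the table ($1,3,\dots,2r-3$ together with $r-1$, evaluated at $r=2$) are $m_1=m_2=1$, not $1,3$; the pair $1,3$ belongs to $\B_2=\C_2$. Hence the covolume is $(1-q^{-1})^{-2}$, which equals $9/4>2$ at $q=3$ and $16/9<2$ at $q=4$; this, and not your claimed inequality $(1-3^{-1})^{-1}(1-3^{-3})^{-1}>2$ (that product is $81/52<2$), is why $\mathbb S{\rm pin}_4$ demands $q\ge 4$. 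The repeated exponent $1$ is also precisely the reason your majorant, in which $m=1$ appears only once, fails to cover this case. Relatedly, your assertion that the $m_i$ are distinct odd numbers in the classical non-$\A$ types is false for type $\D$: for instance $\D_4$ has exponents $1,3,3,5$ and $\D_5$ has the even exponent $4$. The majorant argument still succeeds there, but only because in every type other than $\D_2$ the $i$-th smallest exponent is at least $i$; this is the fact you actually need to check, rather than distinctness or growth of the $m_i$.
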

\begin{proof}
For $\bbg=\mathbb S{\rm pin}_4$, it is easy to get the inequality
by the above calculation. For the other cases, again using
the above formula, it is not hard to see that it would be
enough to show that $F(x)=\prod_{i=1}^{\infty} (1-x^{-i})^{-1}$ is
convergent and less than two, for $x\geq 3$. Let
$F_n(x)=\prod_{i=1}^{n}(1-x^{-i})^{-1}$.
$$\begin{array}{lll}
\ln(F_n(x))&=&-\sum_{i=1}^n \ln (1-x^{-i})=\sum_{i=1}^n\sum_{j=1}^{\infty} \frac {x^{-ij}} {j}\\\\

&=&\sum_{j=1}^{\infty}\sum_{i=1}^{n} \frac {x^{-ij}} {j}<\sum_{j=1}^{\infty}\sum_{i=1}^{\infty} \frac {x^{-ij}} {j}\\\\
&=&\sum_{j=1}^{\infty}\frac{1}{j(x^j-1)}\leq \sum_{j=1}^{\infty}\frac{1}{j(3^j-1)}\\\\
&=&\frac{1}{2}+\frac{1}{16}+\sum_{j=3}^{\infty}\frac{1}{j(3^j-1)}\\\\
&<&\frac{1}{16}-\frac{1}{8}+\frac{1}{2}+\frac{1}{8}+\sum_{j=3}^{\infty}\frac{1}{j\cdot 2^j}\\\\
&=&-\frac{1}{16}+\sum_{j=1}^{\infty}\frac{1}{j\cdot
2^j}=\ln(2)-\frac{1}{16}.
\end{array}$$

\noindent

Hence $\{F_n(x)\}_{n=1}^{\infty}$ is increasing and bounded by
two. Thus it is convergent and $F(x)<2$.
\end{proof}

\section{Proof of theorem B.}
Let $\Gamma$ be a lattice of minimum covolume. So, by corollary~\ref{upperbound},  its covolume in $G=\bbg(F)$ is smaller than 2. Moreover, by theorem A, we may and will assume that $\Gamma$ intersects $G_{l(\bbg)}$ trivially. We start with the classical types and then take care of groups of type $\E_6$.
\\

\textit {\textbf{Classical types.}}
Since $l(\bbg)=1$ when $\bbg$ is of type A, we may and will assume that $\bbg$ is not of type A. For the other classical types, $l(\bbg)=2$, so  $\Gamma\cap G_2=(1)$. $\Gamma \cap G_1$ is a unipotent subgroup. If it is trivial, there is nothing to prove, otherwise by
theorem~\ref{unipotent} and changing $\Gamma$ by a similar argument as we have seen in the proof of theorem A, one can find a standard parabolic $\bbp=\bbp_{\Xi}$, such that $\Gamma \cap G_1
\subseteq R_u(\bbp)(\o)$ and $\Gamma \cap G_0\subseteq \bbp(\o)$. On the other hand, since $\bbp\cap R_u(\bbp^-)=(1)$, it is clear that $|\bbg(\bbf_q)|\geq |\bbp(\bbf_q)|\cdot |R_u(\bbp)(\bbf_q)|$.
Therefore as a direct corollary, we have $|\Gamma \cap G_0| \le |\bbg(\bbf_q)|$. In fact, if $R_u(\bbp)$ is not abelian, one can get $|\Gamma \cap G_0| \le p^{-1}|\bbg(\bbf_q)|$ since $\Gamma\cap
G_1$ is abelian. If $R_u(\bbp)$ is abelian, $\bbp$ should be a maximal parabolic $\bbp_{\alpha}$ and the coefficient of $\alpha$ in the highest root should be 1. Hence by the similar argument as
we had in the proof of theorem A, $h(\theta_{\alpha})^{-1}(\Gamma)$ intersects the first congruence
subgroup trivially. So, we may assume that $|\Gamma \cap G_0| \le p^{-1}|\bbg(\bbf_q)|$.

However, we claim that under this assumption the covolume of $\Gamma$
is bigger than 2.  In order to show this claim, it is enough to note the following trivial lemma:

\begin{lem}~\label{index-vol} With the previous notations:
$$\vol(G/{\Gamma})\geq\frac{|\bbg(\bbf_q)|}{|\Gamma\cap G_0|}.$$
\end{lem}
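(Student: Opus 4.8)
The plan is to compare the invariant measure of $G/\Gamma$ with that of the image of the compact-open subgroup $G_0=\bbg(\o)$ under the quotient map $G\to G/\Gamma$. First I would record the normalization. Since the Haar measure $\mu$ is chosen so that $\mu(G_1)=1$, and since $\xi_1$ maps $G_0$ onto $\bbg(\bbf_q)$ with kernel exactly $G_1$, the index $[G_0:G_1]$ equals $|\bbg(\bbf_q)|$; hence $\mu(G_0)=|\bbg(\bbf_q)|\cdot\mu(G_1)=|\bbg(\bbf_q)|$. Because $\Gamma$ is discrete and $G_0$ is compact, $\Gamma\cap G_0$ is finite, so the right-hand side of the claimed inequality is meaningful.

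Next I would invoke the standard integration formula relating $\mu$ to the invariant quotient measure $\bar\mu$ on $G/\Gamma$: for $f\in C_c(G)$,
$$\int_G f\,d\mu=\int_{G/\Gamma}\Big(\sum_{\gamma\in\Gamma}f(x\gamma)\Big)\,d\bar\mu(x\Gamma).$$
Applying this to the indicator function of $G_0$ gives
$$\mu(G_0)=\int_{G/\Gamma}|x\Gamma\cap G_0|\,d\bar\mu(x\Gamma),$$
where the integrand is a fiber count: it vanishes off the image $G_0\Gamma/\Gamma$, and on that image I would show it equals the constant $|\Gamma\cap G_0|$. Indeed, if $x\Gamma\in G_0\Gamma/\Gamma$ I may take $x=g\in G_0$, and then $g\Gamma\cap G_0=g(\Gamma\cap g^{-1}G_0)=g(\Gamma\cap G_0)$ since $g^{-1}G_0=G_0$ as $G_0$ is a subgroup containing $g$; this set has exactly $|\Gamma\cap G_0|$ elements.

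Combining, $\mu(G_0)=|\Gamma\cap G_0|\cdot\bar\mu(G_0\Gamma/\Gamma)$, so $\bar\mu(G_0\Gamma/\Gamma)=|\bbg(\bbf_q)|/|\Gamma\cap G_0|$. Since $G_0\Gamma/\Gamma\subseteq G/\Gamma$ and $\bar\mu$ is nonnegative, monotonicity yields $\vol(G/\Gamma)=\bar\mu(G/\Gamma)\ge\bar\mu(G_0\Gamma/\Gamma)=|\bbg(\bbf_q)|/|\Gamma\cap G_0|$, which is the claim. The content here is genuinely light, which is why the authors call the lemma trivial. The only point requiring a moment of care is that the fiber count $|x\Gamma\cap G_0|$ is uniformly equal to $|\Gamma\cap G_0|$ over the image; this hinges precisely on $G_0$ being a subgroup, so that $g^{-1}G_0=G_0$ for $g\in G_0$, whereas for a mere coset the count could vary. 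Everything else is standard bookkeeping of the quotient Haar measure, and no property of $\Gamma$ beyond discreteness and the finiteness of $\Gamma\cap G_0$ enters.
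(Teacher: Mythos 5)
Your proof is correct and is essentially the paper's argument: the paper quotes the double coset formula $\vol(G/\Gamma)=\mu(G_0)\sum_{x\in T}|G_0\cap x\Gamma x^{-1}|^{-1}$ and keeps only the $x=e$ term, which is exactly the quantity $\bar\mu(G_0\Gamma/\Gamma)=\mu(G_0)/|\Gamma\cap G_0|$ that you compute directly via Weil's unfolding formula. The only difference is that you derive that one term from first principles rather than citing the full decomposition as well-known.
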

\begin{proof} It is well-known that,
$\vol(G/{\Gamma})=\mu(G_0)\cdot\sum_{x\in
T}\frac{1}{|G_0\cap x\Gamma x^{-1}|}$ where $T$ is a set of
representatives of the double cosets of $G_0$ and $\Gamma$ in $G$.
In particular, $$\vol(G/{\Gamma})\geq\frac{|G_0/G_1|}{|\Gamma\cap G_0|}=\frac{|\bbg(\bbf_q)|}{|\Gamma\cap G_0|}.$$ 
\end{proof}

\noindent By the above lemma and  our assumption on $\Gamma$, we have  $\vol(G/\Gamma)\geq p\geq 2$, which is a contradiction. This finishes the proof of theorem B, for groups of classical types.
\\

\textit{\textbf{Type $\E_6$.}} 
Since $\bbg$ is of type $\E_6$,  $l(\bbg)=3$, and so $\Gamma\cap G_3=(1)$. Again since $\Gamma$ is a lattice of minimum covolume, by corollary~\ref{upperbound}, its covolume is less than 2. Now under the restriction on the covolume, we shall push $\Gamma$  out of $G_2$. As soon as, we get $\Gamma\cap G_2=1$, by a similar argument as the classical case, we can also push $\Gamma$ out of the first congruence subgroup. 

\noindent
\textit{\textbf{Pushing out of $G_2$:}} As in the proof of theorem A, after replacing  $\Gamma$ with $h(\Gamma)$ for some $h\in \Ad(\bbg)$, we can assume that  $\Gamma\cap G_3=1$ and $|\Gamma\cap G_2|$ is minimum among such choices.  If $\Gamma\cap G_2$ is trivial, we are done. If not, with a similar argument as in the proof of theorem A,  we can further assume that, $\Gamma\cap G_0\subseteq \bbp$, and $\Gamma\cap G_1\subseteq R_u(\bbp)$, where $\bbp=\bbp_{\Xi}$ is a standard parabolic.
For any $\alpha\in\Xi$ whose coefficient in the highest root is less than $l(\bbg)=3$, we can apply the same line of argument as in the proof of theorem A, and get that:
\begin{itemize}
\item[i)] $h(\theta_{\alpha})^{-1}(\Gamma)\cap G_3=1$.
\item[ii)] $\Gamma\cap h(\theta_{\alpha})(G_2)=\Gamma\cap G_2$.
\end{itemize} 

In the root system of type $E_6$, only one simple root $\alpha_0$ has coefficient $l(\bbg)=3$ in the highest root. Hence $\xi_3(\Gamma\cap G_2)\subseteq \bbp^-_{\Xi\setminus\{\alpha_0\}}$. So if $\alpha_0$ is not in $\Xi$, since $R_u(\bbp)\cap \bbp^-=1$, we are done. So we can assume that $\alpha_0\in \Xi$. Then $\xi_3(\Gamma\cap G_2)\subseteq \mathbb{U}_{\alpha_0}(\o/t^{-3}\o)$. $H=(\Gamma\cap G_0)/(\Gamma\cap G_1)$ can be realized as a subgroup of $\bbp(\bbf_q)$, also $\Gamma\cap G_2$ can be realized as a subgroup of $\Lie(R_u(\bbp))(\bbf_q)$, and since $\Gamma\cap G_2$ is a normal subgroup which commutes with $\Gamma\cap G_1$,  $H$ acts on it by conjugation, this action can be viewed as the action induced by the adjoint action of $\bbp$ on $\Lie(R_u(\bbp))$. By the above argument, the subgroup corresponded to $\Gamma\cap G_2$ in $\Lie(R_u(\bbp))(\bbf_q)$ is a non-trivial subgroup $U$ of $\Lie(\mathbb{U}_{\alpha_0})(\bbf_q)$.  The following lemma implies that though $U$ is finite, it is ``rigid" enough to give us some information at the level of algebraic groups.

\begin{lem}~\label{connec} $N_{\bbg}(U)=\{g\in\bbg\hspace{1mm}|\hspace{1mm} \Ad(g)(U)=U\}$ is a subgroup of a connected $\bbf_q$-algebraic group $\bbl$, with Lie algebra equal to $\mathfrak{l}=\Lie(\bbt)\oplus\sum_{\beta\in\Phi_{\alpha_0}}\Lie(\bbu_{\beta}),$ where $\Phi_{\alpha_0}=\{\phi\in\Phi\hspace{1mm}|\hspace{1mm}\alpha_0+\beta\not\in\Phi\}.$ 
\end{lem}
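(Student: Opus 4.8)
The plan is to show that $N_{\bbg}(U)$ already stabilizes the whole line $\Lie(\bbu_{\alpha_0})$, and then to recognize the stabilizer of that line as the parabolic subgroup attached to the cocharacter $\alpha_0^{\vee}$, whose Lie algebra is exactly $\mathfrak{l}$. The first move exploits that $\Lie(\bbu_{\alpha_0})$ is one-dimensional: since $U$ is a nontrivial additive subgroup of $\Lie(\bbu_{\alpha_0})(\bbf_q)$, any $0\neq u\in U$ spans $\Lie(\bbu_{\alpha_0})$ over $\bar{\bbf}_q$. If $\Ad(g)(U)=U$, then $\Ad(g)(u)$ is a nonzero element of $U\subseteq\Lie(\bbu_{\alpha_0})$, so the linear map $\Ad(g)$ sends the line $\Lie(\bbu_{\alpha_0})$ into itself; hence $N_{\bbg}(U)\subseteq S:=\{g\in\bbg\mid\Ad(g)(\Lie(\bbu_{\alpha_0}))=\Lie(\bbu_{\alpha_0})\}$. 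This reduction uses the finiteness of $U$ only through the fact that $U$ spans the line, so it is insensitive to the field over which $g$ is taken.

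Next I would compute $\Lie(S)$. Differentiating the line-stabilizing condition gives $\Lie(S)\subseteq\{X\in\gfr\mid[X,\Lie(\bbu_{\alpha_0})]\subseteq\Lie(\bbu_{\alpha_0})\}$, and conversely every such $X$ is tangent to $S$. Decomposing $X$ along $\Lie(\bbt)$ and the root spaces, $[\Lie(\bbt),\Lie(\bbu_{\alpha_0})]\subseteq\Lie(\bbu_{\alpha_0})$ always holds, while $[\Lie(\bbu_\beta),\Lie(\bbu_{\alpha_0})]$ lies in $\Lie(\bbu_{\alpha_0})$ precisely when $\alpha_0+\beta\notin\Phi$, with the single exception $\beta=-\alpha_0$, where the bracket is a nonzero element of $\Lie(\bbt)$ and thus escapes the line. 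So the root spaces occurring in $\Lie(S)$ are the $\Lie(\bbu_\beta)$ with $\alpha_0+\beta\notin\Phi$ and $\beta\neq-\alpha_0$. Because $\E_6$ is simply laced, each $\alpha_0$-string has length at most two, so this condition is equivalent to $\langle\beta,\alpha_0\rangle\geq0$ for the pairing $\langle\beta,\alpha_0\rangle=2(\beta,\alpha_0)/(\alpha_0,\alpha_0)$. Hence $\Lie(S)=\mathfrak{l}$, the sum of the non-negative $\alpha_0^{\vee}$-weight spaces of $\gfr$, which is a parabolic subalgebra.

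Finally I would identify $S$ as a connected group. Let $\bbl=\{g\in\bbg\mid\lim_{s\to0}\alpha_0^{\vee}(s)\,g\,\alpha_0^{\vee}(s)^{-1}\text{ exists}\}$ be the parabolic subgroup attached to $\alpha_0^{\vee}$; it is connected, defined over $\bbf_q$ (as $\alpha_0^{\vee}$ is), and has Lie algebra $\mathfrak{l}$. The maximal value of $\langle\beta,\alpha_0\rangle$ on $\Phi$ is $2$, attained only at $\beta=\alpha_0$, so $\Lie(\bbu_{\alpha_0})$ is the top step of the $\alpha_0^{\vee}$-weight filtration of $\gfr$; consequently $\bbl$ preserves this line, giving $\bbl\subseteq S$. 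Since $S$ is a closed subgroup of $\bbg$ containing the parabolic $\bbl$, it is itself parabolic, hence connected and smooth; and $\Lie(S)=\mathfrak{l}=\Lie(\bbl)$ together with $\bbl\subseteq S$ forces $S=\bbl$. Therefore $N_{\bbg}(U)\subseteq S=\bbl$, as claimed.

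The step needing the most care is the middle one: one must check that $\alpha_0+\beta\notin\Phi$ genuinely singles out the non-negative $\alpha_0^{\vee}$-weight spaces, so that $\mathfrak{l}$ is a parabolic subalgebra and $\Lie(\bbu_{-\alpha_0})$ is correctly excluded (for $\beta=-\alpha_0$ one has $\alpha_0+\beta=0\notin\Phi$ but the bracket leaves the line). This is exactly where the simply-laced nature of $\E_6$ enters, through the bound on the length of $\alpha_0$-strings. Once $\mathfrak{l}$ is recognized as the non-negative part for $\alpha_0^{\vee}$, the passage to the connected parabolic $\bbl$ is routine structure theory, and the initial reduction to the line stabilizer is forced purely by $\dim\Lie(\bbu_{\alpha_0})=1$.
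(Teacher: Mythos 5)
Your proof is correct, and it reaches the conclusion by a somewhat different route than the paper. The paper's proof also begins by observing $N_{\bbg}(U)\subseteq N_{\bbg}(\bbu_{\alpha_0})$, but then establishes that this normalizer is a connected parabolic by conjugating $\alpha_0$ to the highest root via the Weyl group (using that $\E_6$ is simply laced, so the Weyl group is transitive on roots) and noting that the normalizer of the highest-root subgroup contains the Borel; the Lie algebra identification is declared ``clear.'' You instead identify the stabilizer of the line $\Lie(\bbu_{\alpha_0})$ with the dynamic parabolic attached to the cocharacter $\alpha_0^{\vee}$, which lets you avoid the Weyl conjugation and makes the Lie algebra computation explicit via root strings. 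Both arguments are sound; yours is longer but more self-contained, and it buys one genuine dividend: you correctly observe that $\beta=-\alpha_0$ satisfies $\alpha_0+\beta\notin\Phi$ yet $[\Lie(\bbu_{-\alpha_0}),\Lie(\bbu_{\alpha_0})]$ escapes into $\Lie(\bbt)$, so $\Lie(\bbu_{-\alpha_0})$ must be excluded from $\mathfrak{l}$ --- i.e.\ the set $\Phi_{\alpha_0}$ as literally defined in the statement should be replaced by $\{\beta\in\Phi\mid\langle\beta,\alpha_0\rangle\geq 0\}$, since otherwise $\mathfrak{l}$ is not even a subalgebra. This is a harmless imprecision in the paper (the lemma is only used downstream for an upper bound on $\dim\bbl$, and your smaller $\bbl$ makes the containment $N_{\bbg}(U)\subseteq\bbl$ a fortiori true), but your version is the accurate one. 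The only cosmetic point: your parenthetical claim that every $X$ normalizing the line is automatically tangent to $S$ presupposes smoothness of the stabilizer, which is not needed --- the inclusion $\Lie(S)\subseteq\mathfrak{l}$ together with $\bbl\subseteq S$ already forces $\Lie(S)=\mathfrak{l}$, exactly as you use it in the final step.
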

\begin{proof}
It is easy to see that $N_{\bbp}(U)\subseteq N_{\bbp}(\bbu_{\alpha_0})$. We want to show that $\bbl=N_{\bbg}(\bbu_{\alpha_0})$ has the claimed properties. Since $E_6$ is simply laced, Weyl group acts transitively on the roots, so $\alpha_0$ can be sent to the highest root. Then clearly the normalizer contains the minimal parabolic, and therefore itself is a parabolic and so is connected. The statement about the Lie algebra is clear. 
\end{proof}

\noindent
Now, let us introduce a few notations:
\begin{itemize}
\item[i)] $\langle\Xi\rangle$ be the subset of positive roots in whose linear combinations in terms of simple roots at least one of the elements of $\Xi$ has a non-zero coefficient. 
\item[ii)] $n_{\Xi,\alpha_0}:=|\Phi_{\alpha_0}\setminus -\langle\Xi\rangle|$.
\item[iii)] $m_{\Xi,\alpha_0}:=|\Phi_{\alpha_0}\cap\langle\Xi\rangle|$.
\end{itemize}

Now we claim that $\dim (\bbl)+\dim (\bbl\cap R_u(\bbp))+1<\dim(\bbg)$.  By the above lemma, $\dim(\bbl)\le r+n_{\Xi,\alpha}$, and $\dim (\bbl\cap\Lie(R_u(\bbp)))\le m_{\Xi,\alpha}$, where $r=6$ is the rank. Since $\dim \bbg=r+|\Phi|$,  we need to show that $n_{\Xi,\alpha_0}+m_{\Xi,\alpha_0}+1<|\Phi|$. To show this we need to recall a few facts about simply laced root systems:
\begin{remark} Let $\alpha,\beta\in\Phi$ then:
\begin{itemize}
\item[i)]  $\alpha+\beta\in\Phi$ if and only if $\langle\alpha,\beta\rangle=-1.$
\item[ii)] $\langle\alpha,\beta\rangle=-1$ if and only if $\langle\alpha,\alpha+\beta\rangle=1.$
\end{itemize}
\end{remark}

\noindent
Using the above remark, we have: 
$$|\{\beta\in\langle\Xi\rangle\hspace{1mm}|\hspace{1mm}\langle\beta,\alpha_0\rangle=-1\}|=
|\{\beta\in\langle\Xi\rangle\hspace{1mm}|\hspace{1mm}\langle\beta,\alpha_0\rangle=1\}|.$$
Therefore $n_{\Xi,\alpha_0}+m_{\Xi,\alpha_0}+1=|\{\beta\in\Phi\hspace{1mm}|\hspace{1mm}\langle\alpha_0,\beta\rangle\neq -1\}|$, which is clearly less than $|\Phi|$.

\noindent
On the other hand,  $|\Gamma\cap G_2|\le q$, and  the following two inequalities are clear: $$|(\Gamma\cap G_0)/(\Gamma\cap G_1)|\le|\bbl(\bbf_q)|\hspace{1mm}\& \hspace{1mm} |(\Gamma\cap G_1)/(\Gamma\cap G_2)|\le|\bbl(\bbf_q)\cap R_u(\bbp)(\bbf_q)|.$$ It is not hard to check that $|\bbl(\bbf_q)|\le q^{\dim\bbl}$, as $\bbl$ is a connected algebraic group by lemma~\ref{connec}. Therefore by the virtue of corollary~\ref{upperbound}, for $q\ge 3$,
$$|\Gamma\cap G_0|\le q^{\dim\bbg-1}\le 2q^{-1}|\bbg(\bbf_q)|.$$
Now by lemma~\ref{index-vol}, we have $\vol(G/\Gamma)\ge \frac{|\bbg(\bbf_q)|}{|\Gamma\cap G_0|}\ge q/2.$ So for $q\ge 4$, we would get a contradiction, which finishes the proof.


\section{Remarks on cohomology of finite groups.}
Using results from~\cite{St, CPS1, CPS2}, J.~Bernstein showed:

\begin{thm}~\label{cohom1}
If $\bbg$ is an absolutely almost simple simply connected
algebraic group over $\bbf_q$ and $q>9$, then
$$H^1(\bbg(\bbf_q),\Lie(\bbg)(\bbf_q))\simeq Z(\bbf_q)^*,$$
where $Z$ is equal to the center of $\Lie(\bbg)$ and
$Z(\bbf_q)^*=\Hom_{\bbf_q}(Z(\bbf_q),\bbf_q)$.
\end{thm}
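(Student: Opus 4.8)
The plan is to reduce the computation for the finite group $\Gamma=\bbg(\bbf_q)$ to a question about the rational (and infinitesimal) cohomology of the algebraic group $\bbg$, and then to isolate the contribution of the center of $\Lie(\bbg)$. First I would invoke the generic-cohomology comparison of Cline--Parshall--Scott~\cite{CPS1, CPS2}: for $q=p^e$ large relative to the weights of the coefficient module and to the cohomological degree, the finite-group cohomology $H^1(\Gamma,V)$ stabilizes and is computed by the rational cohomology of $\bbg$ (equivalently by $H^1$ of the first Frobenius kernel, suitably twisted). The hypothesis $q>9$ is precisely what guarantees that this comparison is an isomorphism in degree $1$ for the adjoint module $V=\Lie(\bbg)$, whose highest weight is the highest root $\tilde\alpha$. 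This converts the problem into understanding $\mathrm{Ext}^1$-groups for $\bbg$ among the composition factors of $\Lie(\bbg)$.

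Next I would analyze $\Lie(\bbg)$ as a $\bbg$-module. Away from the torsion/special primes it is the irreducible $L(\tilde\alpha)$, it is self-dual, its center $Z$ vanishes, and Steinberg's vanishing theorem~\cite{St} together with the comparison gives $H^1(\Gamma,\Lie(\bbg))=0=Z(\bbf_q)^*$, as required. The substance is the special primes (for instance $p\mid n$ in type $\A_{n-1}$), where the Killing form degenerates and $Z=Z(\Lie(\bbg))\neq 0$ occurs as a trivial submodule with $\Lie(\bbg)/Z\cong L(\tilde\alpha)$, while dually $\Lie(\bbg)^*$ carries a trivial quotient $Z^*$. I would record the short exact sequence $0\to Z\to\Lie(\bbg)\to L(\tilde\alpha)\to 0$ together with its dual, noting the resulting sub/quotient asymmetry between $\Lie(\bbg)$ and $\Lie(\bbg)^*$.

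Finally I would run the long exact sequence in $\Gamma$-cohomology attached to $0\to Z\to\Lie(\bbg)\to L(\tilde\alpha)\to 0$. Since $\bbg$ is simply connected and $q>3$, the group $\Gamma$ is perfect, so $H^1(\Gamma,Z)=\Hom(\Gamma,Z)=0$ and the invariants satisfy $\Lie(\bbg)^{\Gamma}=Z$; this forces $H^1(\Gamma,\Lie(\bbg))$ to inject into $H^1(\Gamma,L(\tilde\alpha))$. The crucial input, supplied by the comparison of the first step, is that $\mathrm{Ext}^1_{\Gamma}(\bbf_q,L(\tilde\alpha))$ (equivalently $H^1(\Gamma,L(\tilde\alpha))$) is nonzero exactly at the special primes and is canonically the $\bbf_q$-linear dual $Z(\bbf_q)^*$ of the center; the appearance of the dual rather than $Z$ itself is dictated by the sub/quotient asymmetry recorded above. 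Tracking the connecting homomorphisms then yields the stated isomorphism $H^1(\bbg(\bbf_q),\Lie(\bbg)(\bbf_q))\simeq Z(\bbf_q)^*$.

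The main obstacle is the module-theoretic input at the special characteristics: pinning down the radical/socle structure of $\Lie(\bbg)$, the precise extensions among the trivial module $\bbf_q$ and $L(\tilde\alpha)$, and the resulting self-duality defect, and then verifying that the generic-cohomology comparison genuinely holds in degree one all the way down to the threshold $q>9$. These are exactly the points for which one must import the detailed results of~\cite{St, CPS1, CPS2}, which is why the theorem is attributed to that circle of techniques rather than proved from scratch here.
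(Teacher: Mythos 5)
You should first know that the paper does not prove this statement at all: it is quoted as a theorem of J.~Bernstein (recorded as Corollary 6.3 of Weisfeiler's paper \cite{We}), with only the remark that it is shown ``using results from \cite{St, CPS1, CPS2}''. So there is no in-paper proof to compare yours against; the only cohomology computation the paper actually carries out is the separate, elementary cocycle argument for $H^1(SL_n(\bbf_q),\gl_n(\bbf_q))$ in Theorem~\ref{cohom2}. Your outline is consistent with the circle of techniques the paper attributes the result to (generic cohomology of Cline--Parshall--Scott plus the composition structure of the adjoint module), so as a reconstruction of the intended provenance it is on target.

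As a proof, however, it has concrete gaps beyond the ones you flag yourself. First, the short exact sequence $0\to Z\to\Lie(\bbg)\to L(\tilde\alpha)\to 0$ is not available for all groups covered by the statement: at the very special primes (types $\B$, $\C$, $\F_4$ at $p=2$ and $\G_2$ at $p=3$, in Hogeweij's classification, which the paper itself invokes in Lemma~\ref{fixedpoint}) the adjoint module has additional composition factors, and the theorem as stated constrains only $q$, not $p$. Second, your long exact sequence only yields an injection $H^1(\Gamma,\Lie(\bbg))\hookrightarrow H^1(\Gamma,L(\tilde\alpha))$; to get the claimed isomorphism you must show the connecting map $H^1(\Gamma,L(\tilde\alpha))\to H^2(\Gamma,Z)$ vanishes, and ``tracking the connecting homomorphisms'' is not an argument --- $H^2(\Gamma,Z)\simeq H^2(\Gamma,\bbf_q)^{\dim Z}$ is a Schur-multiplier-type group and is not obviously zero. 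Third, the ``crucial input'' that $H^1(\Gamma,L(\tilde\alpha))\simeq Z(\bbf_q)^*$ is essentially the entire content of the theorem; the generic-cohomology comparison only trades it for a rational or Frobenius-kernel $\mathrm{Ext}^1$ computation that still has to be performed, and verifying that the comparison is already an isomorphism at the threshold $q>9$ is itself one of the delicate points. None of this means your strategy is wrong --- it is almost certainly the strategy behind Bernstein's proof --- but as written the proposal is an annotated citation rather than a proof, which, to be fair, is also all the paper provides for this particular statement.
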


So as a corollary, for $p>3$, $q>9$ and a simply connected
Chevalley group $\bbg$ not of type A, $H^1(\bbg(\bbf_q),
\Ad)$ is trivial. Type A is taken care of in the following
theorem. Our proof is elementary and different from J.~Bernstein's.

\begin{thm}~\label{cohom2}
$H^1(SL_n(\bbf_q),\gl _n(\bbf_q))=0$, where $p,n>2$ and
$SL_n(\bbf_q)$ acts by conjugation on $\gl _n(\bbf_q)$.
\end{thm}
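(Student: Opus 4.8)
The plan is to work directly with the cochain complex, and to show that every $1$-cocycle $f\colon SL_n(\bbf_q)\to\gl_n(\bbf_q)$ — that is, a map with $f(gh)=f(g)+\Ad(g)f(h)$, where $\Ad(g)X=gXg^{-1}$ — is a coboundary $g\mapsto X-\Ad(g)X$. Let $T$ be the diagonal maximal torus. Since $|T|=(q-1)^{n-1}$ is prime to $p$ while $\gl_n(\bbf_q)$ is a $p$-group, $H^1(T,\gl_n(\bbf_q))=0$, so $f|_T$ is already a coboundary; subtracting it I may assume $f|_T=0$. The remaining coboundary freedom is then exactly the maps $g\mapsto Y-\Ad(g)Y$ with $Y$ diagonal, i.e. $Y\in M^T=\tfr$, and I reserve this freedom for the very last step. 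A short computation using $f|_T=0$ shows the normalized $f$ is $T$-equivariant: $f(tgt^{-1})=\Ad(t)f(g)$ for all $t\in T$.

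Next I analyze $f$ on the root subgroups $U_{ij}=\{x_{ij}(s)=I+sE_{ij}\mid s\in\bbf_q\}$, which together generate $SL_n(\bbf_q)$. Decomposing $\gl_n(\bbf_q)=\bigoplus_{k,l}\bbf_q E_{kl}$ into $T$-weight spaces and writing $f(x_{ij}(s))=\sum_{k,l}f_{kl}(s)E_{kl}$, the equivariance forces $f_{kl}(\lambda s)=(t_k/t_l)f_{kl}(s)$ whenever $\lambda=t_i/t_j$. A weight argument restricts the nonzero components to weights $0$ and $\pm\alpha_{ij}$, i.e. to the diagonal entries, $E_{ij}$, and $E_{ji}$. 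Because $x_{ij}(s)$ fixes $E_{ij}$, additivity of the cocycle on the abelian group $U_{ij}$ together with degree-one homogeneity gives $f_{ij}(s)=c_{ij}s$ for a scalar $c_{ij}\in\bbf_q$. The diagonal components are additive and invariant under the $\bbf_q^\times$-scaling, hence vanish for $q>2$. Eliminating the weight $-\alpha_{ij}$ component is the delicate point: here $\Ad(x_{ij}(s))$ acts on $E_{ji}$ with a term $s(E_{ii}-E_{jj})$ and a quadratic term $-s^2E_{ij}$, and it is the interaction of this quadratic term with additivity, valid precisely because $p>2$, that forces this component away. After these reductions $f(x_{ij}(s))=c_{ij}\,s\,E_{ij}$.

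Finally I reconstruct $f$ as a coboundary. For a diagonal $Y=\diag(y_1,\dots,y_n)\in\tfr$ one computes $Y-\Ad(x_{ij}(s))Y=s(y_i-y_j)E_{ij}$, so subtracting the coboundary of $Y$ replaces $c_{ij}$ by $c_{ij}-(y_i-y_j)$. It therefore suffices to solve $y_i-y_j=c_{ij}$, which is possible exactly when the scalars satisfy the additivity relations $c_{ik}=c_{ij}+c_{jk}$. These I obtain by applying $f$ to the Chevalley commutator relation $[x_{ij}(s),x_{jk}(s')]=x_{ik}(ss')$ and reading off the $E_{ik}$-component; this is precisely where the hypothesis $n>2$ is indispensable, since it supplies a third index $k$ with which to form the commutator. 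Choosing such a $Y$ makes every $c_{ij}$ vanish, so the adjusted $f$ is zero on all $U_{ij}$ and on $T$, hence on all of $SL_n(\bbf_q)$; therefore the original $f$ was a coboundary.

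The main obstacle is the middle step: carrying out the weight bookkeeping on each root subgroup while correctly handling the weight $-\alpha_{ij}$ and diagonal contributions — in particular the quadratic term in the action of $x_{ij}(s)$ — and then verifying the additivity relations $c_{ik}=c_{ij}+c_{jk}$ from the Steinberg relations. The two numerical hypotheses enter here in an essential way: $p>2$ to discard the quadratic weight $-\alpha_{ij}$ term, and $n>2$ to provide enough commuting root data to rigidify the cocycle. This also explains why the excluded case $SL_2$ genuinely behaves differently.
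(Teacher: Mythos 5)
Your outline is correct and is, at its core, the same proof as the paper's: average over the diagonal torus so that the cocycle vanishes on $\bbt(\bbf_q)$, use the resulting $\bbt(\bbf_q)$-equivariance and the weight decomposition of $\gl_n$ to reduce the cocycle on root subgroups to scalars, and then spend the leftover coboundary freedom coming from the full diagonal of $\gl_n$ --- which is exactly where $\gl_n$ rather than $\mathfrak{sl}_n$ is needed. The two arguments part ways in the middle and at the finish. The paper touches only the simple root subgroups $u_{\pm\alpha_i}$, removes the positive constants by subtracting the coboundary of a diagonal $y$ with $y_i-y_{i+1}=x_i$, and then kills the negative constants with the relation $u_{\alpha_i}(1)u_{-\alpha_i}(1)u_{\alpha_i}(1)^{-1}=u_{-\alpha_i}(1)u_{\alpha_i}(1)^{-1}u_{-\alpha_i}(1)^{-1}$; its appeal to $n>2$ serves only to ensure that each simple root maps $\bbt(\bbf_q)$ onto all of $\bbf_q^*$ (false for $SL_2$, where $\alpha(\diag(t,t^{-1}))=t^2$ hits only squares). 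You instead analyze every root subgroup, extract the consistency relations $c_{ik}=c_{ij}+c_{jk}$ from $[x_{ij}(s),x_{jk}(s')]=x_{ik}(ss')$ --- this computation is right, and it also yields the needed antisymmetry $c_{ji}=-c_{ij}$ --- and solve $y_i-y_j=c_{ij}$ at the end, with $n>2$ entering through the third index. Both finishes are valid; yours makes the linear algebra behind the vanishing more transparent, at the cost of heavier weight bookkeeping.

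That bookkeeping is where the only real issues sit, and all of them are concentrated at $q=3$. First, $T$-equivariance alone does \emph{not} confine $f(x_{ij}(s))$ to the weights $0,\pm\alpha_{ij}$: one needs every other root character $\alpha_{kl}$ to restrict nontrivially to $\{t\in\bbt(\bbf_q):t_i=t_j\}$, and for $q=3$, $n\ge4$, $\{k,l\}\cap\{i,j\}=\emptyset$ this fails, since there $t_k/t_l$ is forced to be a square, hence $1$ in $\bbf_3$. Second, your mechanism for the weight $-\alpha_{ij}$ component (the quadratic term against additivity) shows that the component $d(s)$ along $E_{ji}$ is $\bbf_q$-linear, while weight homogeneity gives $d(\lambda s)=\lambda^{-1}d(s)$; together these force $(\lambda^2-1)d=0$, which kills $d$ for $q>3$ but is vacuous over $\bbf_3$. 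Third, a point of order: the diagonal component is additive only once the $E_{ji}$-component is gone, because $\Ad(x_{ij}(s))$ sends $E_{ji}$ into $s(E_{ii}-E_{jj})$ modulo other weights, so the eliminations must go $E_{ji}$, then diagonal, then $E_{ij}$, not in the order you describe. All of this is repairable --- commute $x_{ij}$ with $x_{kl}$ for disjoint pairs, and use $x_{ij}(1)^p=1$ to handle $q=3$ --- and it is harmless for the paper, which only invokes the theorem for $q>9$; indeed the paper's own ``it is easy to see'' step conceals the same small-$q$ degenerations. With those repairs your proof is complete.
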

\begin{proof}
Let $\delta$ be a 1-cocycle from $SL_n(\bbf_q)$ to
$\gl_n(\bbf_q)$. Since $|\bbt(\bbf_q)|$ and $p$ are coprime, we
may define $x=|\bbt(\bbf_q)|^{-1}\sum_{t\in\bbt(\bbf_q)}\delta
(t)$.
 Then for any $t'\in \bbt(\bbf_q)$,
 we have $\delta (t')=x-\lsup{x}{t'}$ since:

$$x=|\bbt(\bbf_q)|^{-1} \cdot \sum_{t\in\bbt(\bbf_q)}\delta (t't)
=|\bbt(\bbf_q)|^{-1} \cdot \sum_{t\in\bbt(\bbf_q)}(\lsup{\delta
(t)}{t'} +\delta (t')) =\lsup{x}{t'}+\delta (t').
$$

So without loss of generality, we may assume that the restriction
of $\delta$ to $\bbt(\bbf_q)$ is zero. Thus for any $g\in
\bbg(\bbf_q)$ and $t\in\bbt(\bbf_q)$, one has
$\delta(tg)=\lsup{\delta(g)}{t}$ and $\delta(gt)=\delta(g)$. Now
for any $1\leq i\leq n-1$, let the simple root
$\alpha_i(\diag(t_1,\cdots,t_n))=t_it_{i+1}^{-1}$, and so
$u_{\alpha_i}(x)=I_n+e_{i,i+1}(x)$ for any $x\in\bbf_q$. Since
$n>2$ it is clear that for any $1\leq i\leq n-1$ and $x\in\bbf_q$,
one can find $t\in\bbt(\bbf_q)$ such that $\alpha_i(t)=x$. Now by
the above discussion we know that $\delta(u_{\alpha_i}(1))$
commutes with all the elements $t\in\bbt(\bbf_q)$ which are in the
kernel of $\alpha_i$. On the other hand, since $p$ is odd, as we
mentioned above, one can find $t\in\bbt(\bbf_q)$ such that
$\alpha_i(t)=2$, and so we get:
$$t\delta(u_{\alpha_i}(1))t^{-1}=\delta(u_{\alpha_i}(1)^2)=
\delta(u_{\alpha_i}(1))+\lsup{\delta(u_{\alpha_i}(1))}{u_{\alpha_i}(1)}.$$

Therefore, it is easy to see that,  there are constants
$x_i\in\bbf_q$, for $1\le i\le n-1$, such that
$\delta(u_{\alpha_i}(x))=e_{i,i+1}(x_ix)$, for any $x\in\bbf_q$.
Clearly, there is a diagonal element
$y=\diag(y_1,\cdots,y_n)\in\gl_n(\bbf_q)$ (not necessarily in
$\sl_n(\bbf_q)$) for which we have
$d\alpha_i(y)=y_i-y_{i+1}=x_i$. Thus for any $i$, $x\in\bbf_q$,
and $t\in\bbt(\bbf_q)$, we have:

$$\delta(u_{\alpha_i}(x))=y-\lsup{y}{u_{\alpha_i}(x)}\hspace{.1in}\&\hspace{.1in}
\delta(t)=0=y-\lsup{y}{t}. $$

Therefore we may assume that the restriction of $\delta$ to the
upper triangular matrices  is zero since $\{g\in SL_n(\bbf_q)|
\delta(g)=0\}$ is a subgroup of $SL_n(\bbf_q)$.

 With the similar argument as above, for any $0\leq i\leq n-1$,
there are $x_i\in \bbf_q$, such that
$\delta(u_{-\alpha_i}(x))=e_{i+1,i}(x_ix)$, where
$u_{-\alpha_i}(x)=I_n+e_{i+1,i}(x)$. At the end, it is enough
to note that for any $i$:
$$u_{\alpha_i}(1)u_{-\alpha_i}(1)u_{\alpha_i}(1)^{-1}=
u_{-\alpha_i}(1)u_{\alpha_i}(1)^{-1}u_{-\alpha_i}(1)^{-1},$$

\noindent and then applying $\delta$ to the both sides, we find
out that for any $i$, $x_i=0$, which finishes the proof.
\end{proof}

\begin{remark} Using the above proof, it is easy to get the special
case of the theorem~\ref{cohom1} for $\mathbb{SL}_n$.
\end{remark}

We will use theorems~\ref{cohom1} and ~\ref{cohom2} to prove:

\begin{thm}~\label{finiteG0}
Let $q>9$, $p>3$, $\o=\bbf_q[[t^{-1}]]$, $\bbg$ be a simply
connected Chevalley group of rank at least 2, and $H$ be
a finite subgroup of $\bbg(\o)$. Assume that $\xi_1$ induces an
isomorphism $\xi$ between $H$ and $\bbg(\bbf_q)$; then:

\begin{itemize}
\item[(i)] If $\bbg$ is not of type $\A$, then there is
$g\in\bbg(\o)$, such that $gHg^{-1}=\bbg(\bbf_q)$. \item[(ii)] If
$\bbg$ is of type $\A_{n-1}$, then there is $g\in GL_n(\o)$ such
that $gHg^{-1}=\bbg(\bbf_q)$.
\end{itemize}
\end{thm}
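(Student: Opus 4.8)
The plan is to interpret the hypothesis as saying that $H$ is a \emph{section} of the reduction surjection $\xi_1\colon\bbg(\o)\to\bbg(\bbf_q)$: writing $\rho=\xi^{-1}\colon\bbg(\bbf_q)\to\bbg(\o)$, this is a group homomorphism with $\xi_1\circ\rho=\mathrm{id}$. There is also the ``constant'' section $i\colon\bbg(\bbf_q)\hookrightarrow\bbg(\o)$ coming from $\bbf_q\hookrightarrow\o=\bbf_q[[t^{-1}]]$, whose image is exactly the standard copy $\bbg(\bbf_q)$. The whole theorem then amounts to showing that $\rho$ and $i$ are conjugate by an element of $\bbg(\o)$ (resp.\ $GL_n(\o)$), and I would prove this by successive approximation along the congruence filtration $G_1\supseteq G_2\supseteq\cdots$. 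The structural input is that, because $p$ is large, each quotient $G_l/G_{l+1}$ is canonically isomorphic to $\gfr(\bbf_q)=\Lie(\bbg)(\bbf_q)$ as an $\bbf_q$-vector space, and the conjugation action of $\bbg(\bbf_q)$ (through $i$) on $G_l/G_{l+1}$ is precisely the adjoint action; this follows from the explicit description of $G_l$ in Proposition~\ref{cong}.

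For the inductive step, assume $\rho$ has already been conjugated so that $\rho(x)\equiv i(x)\pmod{G_l}$ for all $x$ (the case $l=1$ holds at the start since $\xi_1\circ\rho=\xi_1\circ i$). Then $n_x:=\rho(x)i(x)^{-1}\in G_l$, and I would set $\beta_l(x)=\xi_{l+1}(n_x)\in G_l/G_{l+1}\cong\gfr(\bbf_q)$. A direct computation using $\rho(xy)i(xy)^{-1}=n_x\,\bigl(i(x)n_yi(x)^{-1}\bigr)$ shows that $\beta_l$ satisfies the cocycle identity $\beta_l(xy)=\beta_l(x)+{}^{x}\!\beta_l(y)$ for the adjoint action, so $\beta_l\in Z^1(\bbg(\bbf_q),\gfr(\bbf_q))$. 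When $\bbg$ is not of type $\A$ and $p>3$, $q>9$, the centre of $\Lie(\bbg)$ is trivial, so Theorem~\ref{cohom1} gives $H^1(\bbg(\bbf_q),\gfr(\bbf_q))=0$; hence $\beta_l$ is a coboundary, $\beta_l(x)={}^{x}v-v$ for some $v\in\gfr(\bbf_q)$. Lifting $v$ to $g_l\in G_l$ and replacing $\rho$ by $g_l\rho g_l^{-1}$ kills $\beta_l$, i.e.\ now $\rho(x)\equiv i(x)\pmod{G_{l+1}}$. Since $\o$ is complete and the $G_l$ shrink to the identity, the infinite product $g=\cdots g_2g_1$ converges in $\bbg(\o)$ and conjugates $H$ onto the standard $\bbg(\bbf_q)$, proving (i).

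For type $\A_{n-1}$ the obstruction group need not vanish: Theorem~\ref{cohom1} gives $H^1\cong Z(\bbf_q)^*$, which is nonzero when $p\mid n$ (scalar matrices lie in $\sl_n$). To circumvent this I would run the same induction but view the deviation cocycle $\beta_l$, a priori valued in $\sl_n(\bbf_q)=\Lie(SL_n)(\bbf_q)$, inside the larger module $\gl_n(\bbf_q)$. Theorem~\ref{cohom2} gives $H^1(SL_n(\bbf_q),\gl_n(\bbf_q))=0$ (here $n>2$ is exactly the rank$\,\ge 2$ hypothesis), so $\beta_l$ is a coboundary $\beta_l(x)={}^{x}v-v$ with $v\in\gl_n(\bbf_q)$ now only constrained to $\gl_n$, not $\sl_n$. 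The corresponding conjugator $g_l$ then lives in the $l$-th congruence subgroup of $GL_n(\o)$ rather than of $SL_n(\o)=\bbg(\o)$; since $GL_n$ conjugation preserves $SL_n$, the argument goes through verbatim and the limiting conjugator lies in $GL_n(\o)$, which is exactly the weaker conclusion recorded in (ii).

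The step I expect to be the main obstacle is the clean identification of $G_l/G_{l+1}$ with the adjoint module $\gfr(\bbf_q)$ together with the verification that the conjugation action is genuinely the adjoint action; this is where the hypothesis $p>3$ is essential (to avoid torsion-prime pathologies and to make the logarithm-type coordinates on the congruence quotients behave), and it is the linchpin that lets the cohomology vanishing theorems be applied level by level. The bookkeeping of signs in the cocycle/coboundary relation and the convergence of the infinite product are routine by comparison.
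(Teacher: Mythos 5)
Your proposal is correct and is essentially the paper's own proof: the same successive approximation along the congruence filtration, the same deviation $1$-cocycle valued in $\Lie(\bbg)(\bbf_q)$ (resp.\ $\gl_n(\bbf_q)$, passing to $GL_n$ in type $\A$), killed level by level via Theorems~\ref{cohom1} and~\ref{cohom2}. The only cosmetic difference is that the paper extracts a convergent subsequence of the partial conjugators by compactness of the first congruence subgroup, whereas you note directly that the product converges since $g_l\in G_l$.
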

\begin{proof}
Let $\pi=t^{-1}$, and $\bg$ be $\mathbb{GL}_n$ if $\bbg$ is of
type A and $\bbg$ itself otherwise. By induction on $m$, we will
find $g_m$ in the first congruence subgroup $\bar G_1$ of
$\bg(\o)$ such that $g_mgg_m^{-1}\equiv\xi(g) \pmod{\pi^m}$. After
proving this, because of the compactness of $\bar G_1$, a
subsequence of $\{g_m\}_{m=1}^{\infty}$ would converge to an
element $x\in \bar G_1$, and clearly $x$ provides us the claim.

For $m=1$, there is nothing to prove since by our assumption
$g_1=1$ works. Assume that we have found $g_m$ by the above
property, and let us find $g_{m+1}$. For any $g \in \bbg(\bbf_q)$,
by the definition of $g_m$, there is
$\delta(g)\in\Lie(\bg)(\bbf_q)$ for which we have:
$$g_m\xi^{-1}(g)g_m^{-1}=g+\pi^m\delta(g)g+\cdots\, .$$

One can easily check that $\delta$ is a 1-cocycle from
$\bbg(\bbf_q)$ to $\Lie(\bg)(\bbf_q)$. By theorems~\ref{cohom1}
and~\ref{cohom2}, there is $\bar x\in\Lie(\bg)(\bbf_q)$ such that
$\delta(g)=g\bar xg^{-1}-\bar x$. On the other hand, there is
$x\in \bar G_m$ such that $x\equiv 1+\pi^m \bar x
\pmod{\pi^{m+1}}$. Hence it can be easily checked that
$g_{m+1}=xg_m$ satisfies our claim.
\end{proof}

Let us return to the proof theorem C. Here we would like to show that if $\Gamma$ is a lattice of minimum covolume and in $\Omega_1$, after changing $\Gamma$ with an automorphism of adjoint type, we can assume that $\Gamma\cap G_0=\bbg(\bbf_q)$. Since $\Gamma$ is in $\Omega_1$, $\xi_1$ embeds $\Gamma\cap G_0$ into
$\bbg(\bbf_q)$. By lemma~\ref{index-vol}, we also know that
$\vol(G/\Gamma)\geq \frac{|\bbg(\bbf_q)|}{|\Gamma\cap G_0|}$. On the other hand, since $\Gamma$ is a lattice of minimum covolume, its covolume is at most the covolume of $\bbg(\bbf_q[t])$, and so by corollary~\ref{upperbound} its covolume is less than 2.
Hence $\xi_1$ induces an isomorphism between $\Gamma\cap G_0$ and $\bbg(\bbf_q)$.
Now, let us  use theorem~\ref{finiteG0}, for $H=\Gamma\cap G_0$. Thus, as $\bg(\o)$ normalizes the congruence subgroups of $\bbg$, we may assume that $\Gamma\cap G_0=\bbg(\bbf_q)$ after changing $\Gamma$ by an adjoint automorphism, as we wished.

\section{Arithmetic structure of $\Gamma$.}

By Margulis' arithmeticity, since $\bbg$ is a simply connected
absolutely almost simple group, there are a function field $k$
whose $\nu_0$-completion for a place $\nu_0$ is topologically
isomorphic to $F=\bbf_q((t^{-1}))$, a connected absolutely almost
simple $k$-group $\bbh$, and a $k_{\nu_0}$-isomorphism $\phi$ from
$\bbg$ to $\bbh$, such that $\phi(\Gamma)$ is commensurable with
$\bbh(\o_k(\nu_0))$ after identifying $k_{\nu_0}$ by $F$. It is worth mentioning that  $\bbh(\o_k(\nu_0))$ is well-defined only up to commensurability. Let
$\phi(\bbt)=\bbt'$. Clearly $\bbt'$ is a maximal torus and $\phi$
induces a bijection between $\Phi(\bbg,\bbt)$ and
$\Phi'=\Phi(\bbh,\bbt')$, so occasionally we may refer to $\Phi'$
by $\Phi$ according to this bijection. We denote the weight spaces
of $\bbt'$ by $\hfr_{\alpha'}$, so we have:
$$\Lie(\bbh)=\Lie(\bbt')\oplus\bigoplus_{\alpha'\in\Phi'}\hfr_{\alpha'}.$$
 One knows that
  $\Ad(\phi(\Gamma))\subseteq\comm(\Ad(\bbh(\o_k(\nu_0))))=\Ad(\bbh)(k)$
 since $\phi(\Gamma)$ and $\bbh(\o_k(\nu_0))$ are commensurable~\cite[see chapter VII(6.2)]{Ma}.
In particular, by the first step $\phi(\bbg(\bbf_q))$ preserves
the $k$-structure $\hfr_k$ of $\Lie(\bbh)$. Note that $\Lie(\bbh)$ is a vector space over $\mathcal{F}$ a ``universal domain" i.e. an algebraically closed field with uncountable transcendental basis over $\bbf_p$, and $\hfr_k$ is a Lie algebra over $k$ such that $\Lie(\bbh)=\hfr_k\otimes_k \mathcal{F}$. We claim that,
\begin{equation}\label{kstructure}\hfr_k=(\hfr_k\cap\Lie(\bbt'))\oplus\bigoplus_{\alpha'\in\Phi'}(\hfr_k\cap\hfr_{\alpha'})\end{equation}
Let $h\in\hfr_k$. We know that there are $x\in\Lie(\bbt')$ and
$x_{\alpha'}\in\hfr_{\alpha'}$ such that
$h=x+\sum_{\alpha'\in\Phi'}x_{\alpha'}$. We will show that $x$ and $x_{\alpha'}$'s are in $\hfr_k$, which implies equation (\ref{kstructure}). By the above discussion,
for any $t\in\bbt(\bbf_q)$,

$$\Ad(\phi(t))(h)=x+\sum_{\alpha'\in\Phi'}\alpha'(\phi(t))x_{\alpha'}
               =x+\sum_{\alpha\in\Phi}\alpha(t)x_{\alpha'}$$
is in $\hfr_k$. So if $\{(1,(\alpha(t))_{\alpha\in\Phi})\in
\bbf_q^{|\Phi|+1}| t\in\bbt(\bbf_q)\}$ spans the whole space
$\bbf_q^{|\Phi|+1}$, then one can easily write $x$ or $x_{\alpha'}$'s as linear combination of $\Ad(\phi(t))(h)\in \hfr_k$ over $\bbf_q$, which completes the proof of our claim. 
\begin{lem}~\label{poly}
Let $Q(T_1,\cdots,T_n)\in\bbf_q[T_1,\cdots,T_n]$. If $\deg_{T_i}
Q\leq q-2$ for any $i$  and $Q(a_1,\cdots, a_n)=0$ when $a_1
a_2\cdots a_n\neq 0$, then $Q=0$.
\end{lem}
\begin{proof} It can be easily proved by induction on the number of variables.
\end{proof} 
\begin{lem}~\label{root}
Let $q>5$ (resp. $q>7$) when $\bbg$ is not  (resp. is) of type $\G_2$; Then $\{(1,(\beta(t))_{\beta\in\Phi})\in
\bbf_q^{|\Phi|+1}| t\in\bbt(\bbf_q)\}$ spans the whole space
$\bbf_q^{|\Phi|+1}$.
\end{lem}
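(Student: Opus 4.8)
The plan is to pass to the dual formulation: the vectors $(1,(\beta(t))_{\beta\in\Phi})$ span $\bbf_q^{|\Phi|+1}$ if and only if no nonzero linear functional annihilates all of them, i.e. the only tuple $(c_0,(c_\beta)_{\beta\in\Phi})$ with $c_0+\sum_{\beta\in\Phi}c_\beta\,\beta(t)=0$ for every $t\in\bbt(\bbf_q)$ is the zero tuple. Since $\bbg$ is simply connected, the fundamental weights $\{\lambda_\alpha\}_{\alpha\in\Delta}$ form a $\bbz$-basis of $X(\bbt)$, so $t\mapsto(\lambda_\alpha(t))_{\alpha\in\Delta}$ identifies $\bbt(\bbf_q)$ with $(\bbf_q^*)^{\Delta}$, and each root character becomes a Laurent monomial $\beta(t)=\prod_{\alpha}\lambda_\alpha(t)^{c_{\beta,\alpha}}$ with integer exponents $c_{\beta,\alpha}=\langle\alpha,\beta\rangle$. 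First I would reduce each exponent modulo $q-1$ into $\{0,\ldots,q-2\}$ (legitimate because $\lambda_\alpha(t)\in\bbf_q^*$), turning the relation into the vanishing on $(\bbf_q^*)^{\Delta}$ of a polynomial $Q$ in the variables $\lambda_\alpha(t)$ of degree at most $q-2$ in each variable; Lemma~\ref{poly} then forces $Q\equiv 0$ as a polynomial.

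Once $Q\equiv 0$, I would group the monomials by their reduced exponent vectors $\bar c_\beta:=(c_{\beta,\alpha}\bmod(q-1))_{\alpha}\in(\bbz/(q-1))^{\Delta}$, the constant term $c_0$ corresponding to the zero vector. Distinct monomials are linearly independent, so the coefficients attached to a common exponent vector must sum to zero; hence it suffices to prove that $\beta\mapsto\bar c_\beta$ is injective on $\{0\}\cup\Phi$, for then every exponent vector occurs with a single coefficient and we conclude $c_0=0$ and $c_\beta=0$ for all $\beta$. This injectivity is the heart of the matter, and the main obstacle.

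To establish it, note that for distinct $\beta,\beta'\in\{0\}\cup\Phi$ the coincidence $\bar c_\beta=\bar c_{\beta'}$ means $(q-1)\mid\langle\alpha,\beta-\beta'\rangle$ for every simple $\alpha$. Because $\beta\mapsto(\langle\alpha,\beta\rangle)_{\alpha}$ is exactly the coordinate map in the $\lambda$-basis, it is injective, so $\beta-\beta'\neq 0$ yields some simple $\alpha$ with $\langle\alpha,\beta-\beta'\rangle\neq 0$. The key quantitative input is that each Cartan integer satisfies $|\langle\alpha,\beta\rangle|\le 2$ when $\bbg$ is not of type $\G_2$ and $|\langle\alpha,\beta\rangle|\le 3$ for $\G_2$ (the triple bond), whence $|\langle\alpha,\beta-\beta'\rangle|\le 4$ (resp. $\le 6$). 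For $q>5$ (resp. $q>7$) we have $q-1>4$ (resp. $q-1>6$), so a nonzero integer of that size cannot be divisible by $q-1$ — a contradiction. This is precisely where the two thresholds, and the special role of $\G_2$, come from: checking the Cartan-integer bounds is the only genuinely combinatorial step, and the $\G_2$ triple bond is the reason its bound is worse. (Alternatively, since the $\beta$ are distinct characters of the abelian group $\bbt(\bbf_q)$, one could invoke Artin's linear independence of characters in place of Lemma~\ref{poly}, the distinctness of those characters being the same injectivity statement.)
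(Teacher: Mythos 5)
Your proof is correct and follows essentially the same route as the paper: dualize to a vanishing linear relation, use the fundamental-weight basis (valid since $\bbg$ is simply connected) to turn each root into a Laurent monomial with exponents bounded by $2$ (resp.\ $3$ for $\G_2$), and invoke Lemma~\ref{poly}. The only organizational difference is that the paper clears denominators by multiplying by $(\prod_{\alpha}T_{\alpha})^{\rho}$, so the threshold on $q$ enters through the degree bound $2\rho\le q-2$ in Lemma~\ref{poly} and the distinctness of monomials is automatic over $\bbz$, whereas you reduce exponents modulo $q-1$ first and spend the threshold on the injectivity of the reduced exponent vectors --- the two conditions $2\rho\le q-2$ and $q-1>2\rho$ coincide, so the arguments are equivalent (modulo the harmless swap of $\langle\beta,\alpha\rangle$ for $\langle\alpha,\beta\rangle$ in your notation).
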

\begin{proof}
We proceed by contradiction. If it spans a proper subspace, one can
find a non-zero vector $(l_0,(l_{\beta})_{\beta\in\Phi})\in\bbf_q^{|\Phi|+1}$ such that, for any $t\in\bbt(\bbf_q)$ one has
\begin{equation}\label{kernel}l_0+\sum_{\beta\in\Phi}l_{\beta}\beta(t)=0.\end{equation}

 On the other hand, since $\bbg$ is simply connected,
$X(\bbt)$ has a base $\{\lambda_{\alpha}\}_{\alpha\in\Delta}$ such
that
$\sigma_{\alpha'}\lambda_{\alpha}=\lambda_{\alpha}-\delta_{\alpha\alpha'}\alpha'$
for any $\alpha$ and $\alpha'$ in $\Delta$, where
$\delta_{\alpha\alpha'}$ is the Kronecker delta. It is easy to see
that any root $\beta$ is equal to
$\sum_{\alpha\in\Delta}\langle\beta,\alpha\rangle\lambda_{\alpha}$.
So these coefficients are entries of the Cartan matrix. In particual, in absolute value, they are less than or equal to $\rho=2$ (resp. $\rho=3$) for any $\beta\in\Phi$ and $\alpha\in\Delta$, when $\bbg$ is not (resp. is) of
type $\G_2$. Hence equation (\ref{kernel}) gives us that, for any $t\in\bbt(\bbf_q)$,
$l_0+\sum_{\beta\in\Phi}l_{\beta}\prod_{\alpha\in\Delta}\lambda_{\alpha}(t)^{\langle\beta,\alpha\rangle}=0.$
Let
$$Q((T_{\alpha})_{\alpha\in\Delta})=(\prod_{\alpha\in\Delta}T_{\alpha})^{\rho}\cdot(l_0+\sum_{\beta\in\Phi}l_{\beta}
\prod_{\alpha\in\Delta}T_{\alpha}^{\langle\beta,\alpha\rangle}).$$ 
By the
above discussion, $Q$ is a polynomial, $\deg_{T_{\alpha}} Q\leq
2\rho$, and $Q((a_{\alpha})_{\alpha\in\Delta})=0$, for any
$(a_{\alpha})_{\alpha\in\Delta}\in(\bbf_q\setminus \{0\})^{|\Delta|}$.
Now lemma~\ref{poly} completes the proof of the lemma.
\end{proof}

By equation (\ref{kstructure}), we know that $\tfr'_k=\hfr_k\cap\Lie(\bbt')$ is
a $k$-structure of $\Lie(\bbt')$. Therefore
$\bbt'=C_{\bbh}(\Lie(\bbt'))=C_{\bbh}(\tfr'_k)$. On the other hand,
since $k$ is an infinite field, one can find $x\in\tfr'_k$ such
that $C_{\Lie(\bbh)}(x)=C_{\Lie(\bbh)}(\tfr'_k)$, and so
$C_{\bbh}(x)=C_{\bbh}(\tfr'_k)$ as
$\Lie(C_{\bbh}(x))=C_{\Lie(\bbh)}(x)$ because $x$ is
semisimple~\cite[chapter III.9]{Bo}. Hence we can see that
for a semisimple element $x\in\Lie(\bbh)(k)$, $\bbt'=C_{\bbh}(x)$
and so it is defined over $k$. Let $t'\in\bbt'(k)$; since
$\Ad(t')(\hfr_k)=\hfr_k$, $\hfr_k$ is a $k$-structure, and
equation (\ref{kstructure}), for any root $\beta'\in\Phi'$, we have $\beta'(t')\in
k$. However $\bbt'(k)$ is Zariski-dense in $\bbt'$ as $k$ is
infinite~\cite[chapter III.8]{Bo}, and so all the roots are
defined over $k$. Therefore $\Ad(\bbt')$ is a $k$-split torus, and
so using~\cite[chapter V.22.6]{Bo} $\bbt'$ is $k$-split. Hence
by~\cite[theorem 18.7, chapter V]{Bo}, $\bbg$ and $\bbh$ are
$k$-split absolutely almost simple groups. As $\phi$ is an
isomorphism between them, it is a $k$-isomorphism, which finishes
this step.


\section{The possible structures of $\Gamma$.}

First we will show that the only point in the building
$X_{\nu}=X(\bbg,k_{\nu})$ which is fixed by $\bbg(\bbf_p)$
is the hyperspecial vertex $\kappa_0$ corresponded to $\bbg(\o_{\nu})$. In order to see
this claim, it is enough to prove the following lemma:

\begin{lem}~\label{fixedpoint}
Let $\bbg$ be a simply connected Chevalley group, and
$p>5$ (resp. $p>7$) a prime number when $\bbg$ is not (resp. is) of type $\G_2$, and $\bbp$ a proper parabolic
$\bbf_q$-subgroup. Then $\bbg(\bbf_p)$ cannot be a subgroup of
$\bbp(\bbf_q)$.

\end{lem}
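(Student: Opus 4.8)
The plan is to show that $\bbg(\bbf_p)$ is ``too big'' to fit inside any proper parabolic $\bbp(\bbf_q)$, by comparing $p$-local structure. The key observation is that $\bbg(\bbf_p)$ contains abundant unipotent elements --- in fact copies of $\bbf_p$ coming from every root subgroup $\bbu_{\beta}$ --- and these generate the whole group, while a proper parabolic $\bbp(\bbf_q)$ has a much more constrained unipotent structure: its unipotent part lies in $R_u(\bbp)$ together with the unipotent part of a proper Levi, so some root subgroup of $\bbg$ is genuinely missing or forced to map nontrivially through the semisimple/torus directions. First I would recall that, because $\bbg$ is simply connected Chevalley and $p>5$ (resp.\ $p>7$ in type $\G_2$), $p$ is not a torsion prime, so by the Borel--Tits theory (theorems~\ref{ks-embeddable} and~\ref{good}) every unipotent subgroup of $\bbg(\bbf_q)$ is $\bbf_q$-embeddable; this lets me translate ``being contained in a parabolic'' into honest statements about root groups rather than abstract subgroups.

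The main line of argument I would pursue is a counting/generation contradiction. Suppose $\bbg(\bbf_p)\subseteq\bbp(\bbf_q)$ for a proper parabolic $\bbp=\bbp_{\Xi}$. Then every unipotent element of $\bbg(\bbf_p)$, in particular $u_{\beta}(x)$ for $x\in\bbf_p^{\times}$ and every root $\beta$, lies in $\bbp(\bbf_q)$. Since $\bbp$ is proper, there is a simple root $\alpha\notin\Xi$ whose negative root group $\bbu_{-\gamma}$ (for some positive root $\gamma$ involving $\alpha$) is \emph{not} contained in $\bbp$. I would make this precise: the Lie algebra of $\bbp_{\Xi}$ omits exactly the $\bbu_{-\beta}$ for which $\beta$ involves a simple root outside $\Xi$, so $\bbg(\bbf_p)\subseteq\bbp(\bbf_q)$ would force each such $u_{-\beta}(x)$, $x\in\bbf_p$, to actually lie in $\bbp$; because the root groups of $\bbg$ are defined over $\bbf_p$ and these elements are nontrivial, this directly contradicts $\bbp\cap R_u(\bbp^-)=\{1\}$, a fact already used in the proof of theorem~B. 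The hypothesis $p>5$ guarantees $u_{-\beta}(x)\neq 1$ for a genuine $x\in\bbf_p^{\times}$, so the unipotent elements one needs really are present in $\bbg(\bbf_p)$ and not collapsed by the prime field being too small.

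An alternative, perhaps cleaner, route is via fixed points in the building, which is exactly how the lemma is deployed in the text: $\bbg(\bbf_p)\subseteq\bbp(\bbf_q)$ is equivalent to $\bbg(\bbf_p)$ fixing the face of the Bruhat--Tits building associated to $\bbp$, whereas one can exhibit unipotent elements of $\bbg(\bbf_p)$ from opposite root groups whose combined action has no common fixed point other than the hyperspecial vertex $\kappa_0$. Concretely, for opposite simple-root groups $u_{\alpha}(\bbf_p)$ and $u_{-\alpha}(\bbf_p)$ one gets an $\SL_2(\bbf_p)$ (or $\mathrm{PGL}_2$) subgroup, and the condition $p>5$ ensures this $\SL_2(\bbf_p)$ is large enough that its only fixed point in the corresponding tree is the one matching $\kappa_0$; intersecting over the simple roots pins down $\kappa_0$ itself.

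The hard part will be handling type $\G_2$ uniformly and justifying why the threshold jumps to $p>7$ there. This is forced by the torsion-prime phenomenon: $2$ and $3$ are torsion primes for $\G_2$, and the Cartan entries can reach $3$ in absolute value (this is precisely the $\rho=3$ case of lemma~\ref{root}), so the $\SL_2$-subgroups attached to the long and short roots behave differently and one needs $p$ large enough that neither their unipotent elements degenerate nor their fixed-point sets in the building expand. I would treat the simply-laced and doubly-laced cases together through the root-group embeddability input, and isolate $\G_2$ at the single step where the reflection/Cartan bound is invoked, checking by hand that $p>7$ suffices there while $p>5$ suffices elsewhere. I do not expect the routine verification that $\bbp\cap R_u(\bbp^-)=\{1\}$ or the explicit fixed-point computation to be delicate; the genuine obstacle is ensuring the chosen unipotent witnesses are simultaneously nontrivial over $\bbf_p$ and outside the parabolic, which is exactly what the stated lower bounds on $p$ are engineered to provide.
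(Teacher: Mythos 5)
There is a genuine gap, and it sits exactly where the lemma's content lies. Your main argument tacitly takes $\bbp=\bbp_{\Xi}$ to be a \emph{standard} parabolic, i.e.\ one containing the fixed split torus $\bbt$ and compatible with the pinning that defines the root subgroups $u_{\beta}(\bbf_p)$ of $\bbg(\bbf_p)$. Under that assumption the contradiction is indeed immediate (already $u_{-\tilde\alpha}(1)\in\bbg(\bbf_p)$ for the highest root $\tilde\alpha$ lies in $R_u(\bbp_{\Xi}^-)\setminus\bbp_{\Xi}$), but then \emph{no} lower bound on $p$ would be needed --- which should have been a warning sign. The lemma allows an arbitrary proper $\bbf_q$-parabolic, and a conjugate $g\bbp_{\Xi}g^{-1}$ with $g\in\bbg(\bbf_q)$ need not contain $\bbt$ nor omit any standard root group; conjugating it back to standard position moves $\bbg(\bbf_p)$ out of standard position, so ``some $u_{-\beta}(x)$ is forced into $\bbp$'' is not justified. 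The entire difficulty is ruling out a parabolic in general position relative to the $\bbf_p$-form, and that is precisely what the paper's proof addresses: it observes that $V=\Lie(\bbp)$ is stable under $\Ad(\bbt(\bbf_p))$ and under $\Ad(\bbg(\bbf_p))$, uses lemma~\ref{root} (which is where $p>5$, resp.\ $p>7$, enters, via the bound $|\langle\beta,\alpha\rangle|\le\rho$ on Cartan entries fed into the polynomial vanishing lemma~\ref{poly}) to conclude that any $\bbt(\bbf_p)$-stable subspace is a sum of weight spaces, and then invokes the irreducibility (up to the $\A_{lp-1}$ center phenomenon) of $\Lie(\Ad(\bbg))$ as an $\Ad(\bbg)(\bbf_p)$-module to see that no proper nonzero invariant subspace such as $\Lie(\bbp)$ can exist. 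Your proposal contains neither the weight-space rigidity step nor the adjoint-irreducibility input.

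Two secondary points. First, your diagnosis of the $\G_2$ threshold is off: $p>7$ there has nothing to do with torsion primes or with fixed-point sets of $\SL_2(\bbf_p)$ in a tree; it comes from the Cartan integers of $\G_2$ reaching $3$ in absolute value, so the polynomial $Q$ in lemma~\ref{root} has degree $2\rho=6$ in each variable and lemma~\ref{poly} requires $q-2\ge 6$. Second, the Borel--Tits/Gille embeddability results you invoke at the outset are not relevant to this lemma: the question is not whether a unipotent subgroup sits in the radical of some parabolic, but whether a \emph{given} parabolic can be positioned so as to swallow all of $\bbg(\bbf_p)$; your sketched building-theoretic alternative likewise stops short of handling the case where the candidate fixed face corresponds to a parabolic whose Levi could a priori contain the various rank-one subgroups.
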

\begin{proof}
 Using lemma~\ref{root}, one knows that a subspace $V$ of
$\Lie(\bbg)$ which is stable under $\bbt(\bbf_p)$ is a direct sum
of a subspace of $\Lie(\bbt)$ and some of the weight spaces. Now let us proceed by contradiction, and assume that $\bbg(\bbf_p)$ is a subgroup of $\bbp(\bbf_q)$ for some $q$ and an $\bbf_q$-parabolic subgroup $\bbp$. Now let $V=\Lie(\bbp)$. Thus
$V$ is stable under $\Ad(\bbt)(\bbf_p)$ and
$\Ad(\bbg(\bbf_p))$. Hence it is also stable under
$\Ad(\bbg)(\bbf_p)$.  However we know that (see~\cite{Hog}
or~\cite[lemma 4.6]{We}) $\Lie(\Ad(\bbg))$ is irreducible under
$\Ad(\bbg)(\bbf_p)$ unless $\bbg$ is of type $\A_{lp-1},(l\geq 1)$
or $p=2$, and if $\bbg$ is of type $\A_{lp-1}$, then the derived
algebra of $\Lie(\Ad(\bbg))$ is the only
$\Ad(\bbg)(\bbf_p)$-composition factor of $\Lie(\Ad(\bbg))$ which
is a nontrivial $\Ad(\bbg)(\bbf_p)$-module. Hence we can
conclude that $\bbg(\bbf_p)$ cannot be a subgroup of any proper
parabolic subgroup of $\bbg$.
\end{proof}

Note that if $\bbg(\bbf_p)$ fixes a point in the building
different from the hyperspecial vertex $\kappa_0$, it should fix a
vertex different from $\kappa_0$. So it should fixes the ``interval"
between these vertices, and in particular it fixes a neighborhood
of $\kappa_0$. Therefore it would be a subgroup of a proper parahoric
subgroup of $\bbg(\o_{\nu})$. Now we would get a contradiction after
looking at it modulo $\pi_{\nu}$ and using lemma~\ref{fixedpoint}.

As we are working with a lattice of minimum covolume, it is, in particular, a maximal lattice. So let us recall  Rohlfs' maximality criterion. Since we are working with fields of positive characteristic, we refer to proposition 1.4 from~\cite{BP};

\begin{lem}~\label{BP}
Let $k$ be a global function field, $\nu_0$ a place of $k$,
$\bbg$ a simply connected Chevalley group, $\Gamma$ a
maximal lattice in $\bbg(k_{\nu_0})$ which is commensurable with
$\bbg(\o_k(\nu_0))$, and $\Lambda=\Gamma\cap\bbg(k)$. Then the
closure $P_{\nu}$ of $\Lambda$ in $\bbg(k_{\nu})$ is a
parahoric subgroup of $\bbg(k_{\nu})$,
$\Lambda=\bbg(k)\cap\prod_{\nu\neq\nu_0}P_{\nu}$, and $\Gamma$
is the normalizer of $\Lambda$ in $\bbg(k_{\nu_0})$. 
\end{lem}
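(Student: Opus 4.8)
The plan is to follow the strategy of Borel and Prasad~\cite[Proposition 1.4]{BP}, whose arguments transfer to the function field setting because $\bbg$ is simply connected and $\bbg(k_{\nu_0})$ is non-compact, so that strong approximation is available: $\bbg(k)$ is dense in the restricted product $\prod'_{\nu\neq\nu_0}\bbg(k_{\nu})$. The starting point is Margulis' commensurability criterion, already invoked in Section 6 in the form $\comm(\Ad(\bbg(\o_k(\nu_0))))=\Ad(\bbg)(k)$. Since $\Gamma$ is commensurable with $\bbg(\o_k(\nu_0))$, this places $\Gamma$ inside $\Ad(\bbg)(k)$; in particular $\Gamma$ normalizes $\bbg(k)$, so $\Lambda=\Gamma\cap\bbg(k)$ is a normal, finite-index subgroup of $\Gamma$ and is itself a lattice commensurable with $\bbg(\o_k(\nu_0))$.

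Next I would establish the normalizer statement, which is the most direct of the three conclusions. Because $\Lambda$ is normal in $\Gamma$, we have $\Gamma\subseteq N:=N_{\bbg(k_{\nu_0})}(\Lambda)$. The group $N$ lies in $\comm(\Lambda)=\Ad(\bbg)(k)$ and normalizes the lattice $\Lambda$; a standard argument shows that any subgroup of the commensurator normalizing a given lattice is itself discrete and contains that lattice with finite index, so $N$ is again a lattice. Since $\Gamma$ is a maximal lattice and $\Gamma\subseteq N$, we conclude $\Gamma=N=N_{\bbg(k_{\nu_0})}(\Lambda)$.

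It remains to analyze the local closures $P_{\nu}$ and prove the adelic description. For every $\nu\neq\nu_0$ the closure $P_{\nu}$ of $\Lambda$ in $\bbg(k_{\nu})$ is compact and open, hence contained in some parahoric subgroup $Q_{\nu}$ of $\bbg(k_{\nu})$, and $P_{\nu}=\bbg(\o_{\nu})$ for all but finitely many $\nu$. The key point is that, via strong approximation, the assignment sending a coherent family of parahoric subgroups $(Q_{\nu})_{\nu\neq\nu_0}$ to the arithmetic group $\bbg(k)\cap\prod_{\nu\neq\nu_0}Q_{\nu}$ and then to its normalizer in $\bbg(k_{\nu_0})$ is inclusion-preserving: enlarging any $Q_{\nu}$ enlarges the resulting lattice. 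Therefore, if some $P_{\nu}$ were properly contained in a parahoric $Q_{\nu}$, replacing $P_{\nu}$ by $Q_{\nu}$ would produce a lattice strictly larger than $\Gamma$, contradicting maximality; hence each $P_{\nu}$ is parahoric. The same monotonicity yields $\Lambda=\bbg(k)\cap\prod_{\nu\neq\nu_0}P_{\nu}$: the inclusion $\subseteq$ is immediate from the definition of the closures, while any $x\in\bbg(k)$ lying in every $P_{\nu}$ can be adjoined to $\Lambda$ without changing the closures or destroying discreteness, so maximality forces $x\in\Lambda$.

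The main obstacle I anticipate is proving that each $P_{\nu}$ is genuinely parahoric rather than merely contained in one; this is exactly where the interplay between strong approximation and the maximality of $\Gamma$ must be made precise, and where the correspondence between coherent families of parahorics and commensurability classes of arithmetic lattices does the real work. The discreteness of $N_{\bbg(k_{\nu_0})}(\Lambda)$ and the careful use of Margulis' commensurator theorem in positive characteristic are the remaining delicate points, but both are standard once strong approximation is in hand.
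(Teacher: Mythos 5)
First, a point of comparison: the paper does not prove this lemma at all --- it is quoted as Proposition 1.4 of Borel--Prasad \cite{BP} (the function-field analogue of Rohlfs' maximality criterion) --- so your proposal is really a reconstruction of the argument of \cite{BP}. Two of your three steps are essentially sound. The identity $\Gamma=N_{\bbg(k_{\nu_0})}(\Lambda)$ does follow from maximality once one knows that the normalizer of a lattice is discrete; this needs a word (pick finitely many $\lambda_1,\dots,\lambda_m\in\Lambda$ generating a Zariski-dense subgroup, so that any element of the normalizer sufficiently close to $1$ centralizes all $\lambda_i$, hence centralizes $\bbg$ and lies in the finite center). Likewise $\Lambda=\bbg(k)\cap\prod_{\nu\neq\nu_0}P_{\nu}$ is fine, because $\Gamma$ normalizes each closure $P_{\nu}$ and hence normalizes $\bbg(k)\cap\prod_{\nu\neq\nu_0}P_{\nu}$, so the product of the two is a discrete group containing $\Gamma$.

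The genuine gap is in the middle step. You assert that the assignment sending a coherent family $(Q_{\nu})$ to $N_{\bbg(k_{\nu_0})}\bigl(\bbg(k)\cap\prod_{\nu\neq\nu_0}Q_{\nu}\bigr)$ is inclusion-preserving, and conclude that no $P_{\nu}$ can be properly contained in a parahoric. That monotonicity is false: enlarging $Q_{\nu}$ enlarges the principal arithmetic subgroup but can \emph{shrink} its normalizer. For example, if $P_{\nu_1}$ is an Iwahori subgroup and $\Gamma$ contains an element interchanging the two ends of the corresponding edge of the building at $\nu_1$, then replacing $P_{\nu_1}$ by either maximal parahoric containing it destroys that symmetry and the new normalizer no longer contains $\Gamma$. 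Worse, your argument, were it valid, would prove that every $P_{\nu}$ is a \emph{maximal} parahoric (every compact open subgroup is properly contained in some maximal parahoric), which is stronger than the lemma claims and false in general; and even granting monotonicity, ``enlarging'' yields only a non-strict inclusion of normalizers, so no contradiction with maximality results. The correct route, which is the actual content of \cite[Proposition 1.4]{BP}, is different: $\Gamma$ normalizes the compact open subgroup $P_{\nu}$, hence stabilizes its nonempty bounded convex fixed-point set in the building at $\nu$; a canonical point of that set gives a parahoric $Q_{\nu}\supseteq P_{\nu}$ still normalized by $\Gamma$. Then $N_{\bbg(k_{\nu_0})}\bigl(\bbg(k)\cap\prod_{\nu\neq\nu_0}Q_{\nu}\bigr)$ is a discrete subgroup containing $\Gamma$, maximality forces equality, whence $\Lambda=\bbg(k)\cap\prod_{\nu\neq\nu_0}Q_{\nu}$, and strong approximation identifies $P_{\nu}$ with $Q_{\nu}$. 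Without some version of this fixed-point construction, the claim that each $P_{\nu}$ is parahoric is unproved.
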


Going back to the proof of theorem C, so far, we have seen that if $\Gamma$ in $\Omega_1$ minimizes the covolume, then it is commensurable with $\bbg(\o_k(\nu_0))$, where $k$ is a global function field, and $\nu_0$ is one of its places such that $k_{\nu_0}$ is isomorphic to $F$. Moreover, $\bbg(\bbf_p)$ is a subgroup of $\Gamma$. Hence in the setting of lemma~\ref{BP}, $\bbg(\bbf_p)$ is a subgroup of all the parahorics $P_{\nu}$. Thus by the above discussion, for any $\nu\neq\nu_0$, $P_{\nu}=\bbg(\o_{\nu})$, and therefore  $\Lambda=\bbg(\o_k(\nu_0))$ and  $\Gamma=N_{\bbg(k_{\nu_0})}(\bbg(\o_k(\nu_0)))$, which finishes the third step of the proof of theorem C.

\section{$k=\bbf_q(t)$.}
By the previous step, we know that
$\Gamma=N_{\bbg(k_{\nu_0})}(\bbg(\o_k(\nu_0)))$, for a global function
field $k/\bbf_{q_0}$ and a place $\nu_0$, for which $k_{\nu_0}$ is
isomorphic to $\bbf_q((t^{-1}))$. By the first step,
$\bbg(\bbf_q)$ is a subset of $\Gamma$, so it normalizes
$\bbg(k)\cap\bbg(\bbf_q)=\bbg(\bbf_{q_0})$. Hence $q=q_0$ i.e.
$\nu_0$ is a rational place.

In the remaining part of this section, we are going to show that
$k$ is of genus zero. We again proceed by contradiction. First we will give an easy argument for centerless types, and then consider groups with non-trivial center.  

If $\bbg$ is centerless, then it is isomorphic to its adjoint form. Therefore $comm(\bbg(\o_k(\nu_0)))=\bbg(k)$, and so $\Gamma=\Lambda=\bbg(\o_k(\nu_0))$. Now using~\ref{genus}, we get the desired result.

Now assume that $\bbg$ has a non-trivial center. N.~Iwahori and H.~Matsumoto~\cite{IM} showed that there is natural bijection between non-trivial elements of the center of a simply connected Chevalley group $\bbg$, and simple roots with coefficient one in the highest root. On the other hand, a simple root has coefficient one in the highest root if and only if the unipotent radical of the corresponded maximal parabolic is abelian. Hence, we can find a maximal
standard parabolic $\bbp=\bbp_{\alpha}$ whose unipotent radical is
abelian. As we have seen in the proof of the quantitative version
of Kazhdan-Margulis theorem, we may define the automorphism
$\theta=\theta_{\alpha}$, and we would have $\theta(G_1)\subseteq
\bbg(\o_{\nu})$ and $\xi_1(\theta(G_1))\subseteq
R_u(\bbp)(\bbf_q)$. On the other hand,
$\Gamma\cap\bbg(\o_{\nu})=\bbg(\bbf_q)$, and so
$\Gamma\cap\theta(G_1)=R_u(\bbp)(\bbf_q)$. One knows that
$\Gamma\cap\theta(\bbg(\o_{\nu}))$ normalizes
$\Gamma\cap\theta(G_1)$. The following lemmas tell us about the
normalizer of $R_u(\bbg)(\bbf_q)$ in $\bbg(k_{\nu})$:
\begin{lem}~\label{centLevi}
Let $\bbp$ be a standard parabolic whose unipotent radical
$R_u(\bbp)$ is abelian. Then if an element $x$ of
$\bbl=\bbp\cap\bbp^-$ commutes with $R_u(\bbp)(\bbf_q)$, then it
is in the center of $\bbg$.
\end{lem}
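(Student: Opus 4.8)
Looking at Lemma~\ref{centLevi}, I need to show that if $\bbp = \bbp_\alpha$ is a standard maximal parabolic with abelian unipotent radical, and $x \in \bbl = \bbp \cap \bbp^-$ (the Levi subgroup) commutes with $R_u(\bbp)(\bbf_q)$, then $x$ lies in the center of $\bbg$.

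Let me think about the setup and what tools are available.

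The key structural facts:
- $\bbp = \bbp_\alpha$ is maximal, so $\Xi = \{\alpha\}$ and the coefficient of $\alpha$ in the highest root is 1 (that's when $R_u(\bbp)$ is abelian).
- The Levi $\bbl$ contains the torus $\bbt$ and all root subgroups $\bbu_\beta$ where $\beta$ has zero $\alpha$-coefficient.
- $R_u(\bbp)$ is spanned by root subgroups $\bbu_\gamma$ where $\gamma$ has $\alpha$-coefficient equal to 1 (the positive ones).

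So the roots appearing in $\text{Lie}(R_u(\bbp))$ are exactly the positive roots $\gamma$ with $\langle \gamma, \lambda_\alpha \rangle = 1$ (coefficient 1 in terms of simple roots).

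The condition is that $x$ centralizes all of $R_u(\bbp)(\bbf_q)$, i.e. $\text{Ad}(x)$ acts trivially on the root subgroups $\bbu_\gamma$ for these $\gamma$.

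My approach:

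First, I want to reduce to understanding the action of $x \in \bbl$ on $R_u(\bbp)$. Since $x \in \bbl$, write $x = t \cdot u$ via Bruhat-type decomposition within $\bbl$, or better, first handle the semisimple/torus part. Actually, let me think about using the structure more carefully.

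Let me consider the simpler structure: $\text{Ad}(x)$ preserves each root space $\mathfrak{h}_\gamma$ (for $\gamma \in R_u(\bbp)$ roots) if $x \in \bbt$. The condition that $x$ centralizes $u_\gamma(c)$ for all $c \in \bbf_q$ means, for torus elements, $\gamma(x) = 1$ for all such $\gamma$.

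**Plan / Strategy:**

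The plan is to first reduce $x$ to its torus part, then show that the condition $\gamma(x) = 1$ for all roots $\gamma$ of $R_u(\bbp)$ forces $x$ to be central.

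\medskip

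\noindent\textbf{Step 1 (reduce to a torus element).} I would first argue that the centralizing condition forces $x$ to lie in $\bbt$, or at least that I can reduce to this case. Since $x \in \bbl$ commutes with all of $R_u(\bbp)(\bbf_q)$ and $R_u(\bbp)$ is abelian, the set of such $x$ forms a subgroup of $\bbl$; I claim it is $\bbl$-conjugation-stable enough that I may examine it on a maximal torus. Concretely, decompose $x = su$ into its semisimple and unipotent Jordan parts in $\bbl$; both $s$ and $u$ must individually centralize $R_u(\bbp)(\bbf_q)$ since centralizing is preserved under taking Jordan components of $\text{Ad}$. Conjugating $s$ into $\bbt$ inside $\bbl$ (and correspondingly moving $R_u(\bbp)$, which is normalized by $\bbl$), I may assume the semisimple part lies in $\bbt$. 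The condition $\gamma(s)=1$ for every $\gamma$ with $\alpha$-coefficient $1$ then pins down $s$.

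\medskip

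\noindent\textbf{Step 2 (the torus condition is central).} For $s \in \bbt$ with $\gamma(s)=1$ for all roots $\gamma$ of $R_u(\bbp)$, I want $s \in Z(\bbg)$. The roots of $R_u(\bbp)$ include $\alpha$ itself and all higher roots with $\alpha$-coefficient $1$. Using that $\bbg$ is simply connected and $\alpha$ is the unique simple root with coefficient $1$ here (since $\bbp_\alpha$ is the maximal parabolic attached to a central element via Iwahori--Matsumoto), I expect the conditions $\gamma(s)=1$ to propagate to \emph{all} simple roots. The key computation is: every simple root $\beta \neq \alpha$ appears as $\gamma_1 - \gamma_2$ or is linked to $\alpha$ through chains of roots all of $\alpha$-coefficient $1$, so that triviality on those $\gamma$ forces $\beta(s)=1$ for $\beta \neq \alpha$ too, leaving $s$ determined only through $\alpha(s)$, which ranges over the center.

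\medskip

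\noindent\textbf{Step 3 (unipotent part is trivial).} For the unipotent part $u \in \bbl$ centralizing the abelian $R_u(\bbp)$, I would use that $\text{Ad}(u)$ acts on $\text{Lie}(R_u(\bbp))$ as the adjoint representation of the Levi, and a nontrivial unipotent element of the semisimple part of $\bbl$ acts nontrivially on this representation (which is a nontrivial irreducible, or miniscule-type, $\bbl$-module). Hence $u = 1$ up to the central torus, and we conclude $x = su$ is central.

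\medskip

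The main obstacle I anticipate is \textbf{Step 2}: carefully verifying, case-free across the classical and $\E_6$ types, that $\gamma(s)=1$ on all $\alpha$-coefficient-$1$ roots forces $s$ to be central. The clean way is representation-theoretic: $\text{Lie}(R_u(\bbp))$ is an irreducible (minuscule) representation of the Levi $\bbl$ whose highest weight is $\alpha$-related, and $Z(\bbg)$ is exactly the kernel of the $\bbl$-action on it intersected with $\bbt$. I would prefer to phrase Step 2 via this irreducibility of $R_u(\bbp)$ as an $\bbl$-module (available since the relevant maximal parabolic is cominuscule), sidestepping a root-by-root chase. Concretely, $C_\bbt(R_u(\bbp)) = \ker\big(\bigcap_{\gamma}\gamma\big)$ and one checks this intersection of characters equals the character group cutting out $Z(\bbg)$ precisely because $\alpha$ has coefficient $1$.
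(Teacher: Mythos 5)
Your overall route (Jordan decomposition, then torus combinatorics for the semisimple part and faithfulness of the $\bbl$-module $\Lie(R_u(\bbp))$ for the unipotent part) is genuinely different from the paper's and could be made to work, but as written it skips the one step that is actually the point of the lemma: passing from the \emph{finite} group $R_u(\bbp)(\bbf_q)$ to the algebraic group $R_u(\bbp)$. The paper does this first and everything else follows: since $R_u(\bbp)$ is abelian, the root-group coordinates identify it with $\bba^{|\Psi|}$ (where $\Psi$ is the set of roots occurring in $R_u(\bbp)$), and because no sum of two elements of $\Psi$ is a root, the Levi acts \emph{linearly} in these coordinates; an element acting trivially on the $\bbf_q$-points $\bbf_q^{|\Psi|}$, which span the whole space, therefore acts trivially on all of $R_u(\bbp)$. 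Without this, your Step 1 is in trouble: conjugating the semisimple part $s$ into $\bbt$ by some $g\in\bbl$ (a priori only over the algebraic closure) replaces the hypothesis that $s$ centralizes $R_u(\bbp)(\bbf_q)$ by the statement that $gsg^{-1}$ centralizes $gR_u(\bbp)(\bbf_q)g^{-1}$, which is no longer the group of $\bbf_q$-points, so $\gamma(gsg^{-1})=1$ does not follow from $su_{\gamma}(c)s^{-1}=u_{\gamma}(\gamma(s)c)$ as you intend. Your Step 3 has the same soft spot: a nontrivial unipotent element acts nontrivially on the module $\Lie(R_u(\bbp))$, but you need it to act nontrivially on the $\bbf_q$-points, which again requires the linearity-plus-spanning observation.

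Once that upgrade is in place, the rest is simpler than your outline suggests, and the paper avoids both the Jordan decomposition and the minuscule weight chase: the centralizer of all of $R_u(\bbp)$ in $\bbl$ is a normal subgroup of $\bbl$ (as $\bbl$ normalizes $R_u(\bbp)$); since no simple factor of $\bbl$ acts trivially on $R_u(\bbp)$, this centralizer is contained in the center of $\bbl$; and since the highest root lies in $\Psi$, an element of $Z(\bbl)$ killing every root in $\Psi$ is killed by all simple roots and hence lies in $Z(\bbg)$. Your Step 2, by contrast, ends on a muddled note (``leaving $s$ determined only through $\alpha(s)$, which ranges over the center''): $\alpha$ itself lies in $\Psi$, so $\alpha(s)=1$ is imposed directly, and together with $\beta(s)=1$ for the remaining simple roots this is exactly the condition of lying in $Z(\bbg)$ --- no further analysis of which values $\alpha(s)$ can take is needed.
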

\begin{proof}
Let us denote the set of positive roots which ``occur" in
$R_u(\bbp)$ by $\Psi$. First we recall that $\bbu=R_u(\bbp)$,
$\prod_{\beta\in\Psi}\bbu_{\beta}$, and $\bba^{|\Psi|}$ are
isomorphic as $\bbf_q$-varieties. Hence the $\bbf_q$-structure of
the regular functions of $\bbu$ is isomorphic to the ring of
polynomials in $|\Psi|$ variables
$\bbf_q[T_{\beta}]_{\beta\in\Psi}$. Using Steinberg presentation
and the fact that sum of two roots in $\Psi$ is not a root, one can easily see that the vector space generated by
$T_{\beta}$'s is stable under the action of our Levi component
$\bbl$. Therefore for any $y\in\bbl$, there are some constants
$n_{\beta\theta}$ from the universal algebraically closed field such
that
$x^*(T_{\beta})=\sum_{\theta\in\Psi}n_{\beta\theta}T_{\theta}$.
Now if $x$ commutes with the $\bbf_q$ points of $\bbu$, one can
conclude that the matrix $(n_{\beta\theta})$ is identity, and so
$x$ commutes with $\bbu$. Since $\bbu$ is normalized by $\bbl$,
centralizer of $\bbu$ in $\bbl$ is a normal subgroup of $\bbl$.
On the other hand, $\bbl$ is a semisimple algebraic group, and modulo its center is direct product of simple algebraic groups. However none of these factors act trivially on $\bbu$. Therefore $x$ is in the center of $\bbl$.
Since the highest root is an element of $\Psi$, $x$ is also in the center of $\bbg$, as we claimed.
\end{proof}
\begin{lem}~\label{normalizer}
Let $\bbp=\bbp_{\alpha}$ be a standard parabolic whose unipotent
radical is abelian, and $\bbl=\bbp\cap\bbp^-$ be a Levi component;
then:

\begin{itemize}\item[(i)] $N_{\bbg(F)}(R_u(\bbp)(\bbf_q))\subseteq
\bbp(F).$ \item[(ii)]
$N_{\bbg(F)}(R_u(\bbp)(\bbf_q))\cap\bbl(\o)=\bbl(\bbf_q).$
\end{itemize}
\end{lem}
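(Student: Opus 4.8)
The plan is to prove the two statements about the normalizer $N_{\bbg(F)}(R_u(\bbp)(\bbf_q))$ by combining the Borel--Tits theory of unipotent subgroups (Theorem~\ref{unipotent}) with the rigidity lemma~\ref{centLevi}. For part (i), I would observe that $U := R_u(\bbp)(\bbf_q)$ is a finite $p$-group, hence a unipotent subgroup of $\bbg(F)$. By Theorem~\ref{unipotent} there is an $F$-parabolic $\bbq$ with $U \subseteq R_u(\bbq)(F)$ and $N_{\bbg}(U) \cap \bbg(F_s) \subseteq \bbq$. The key point is then to identify $\bbq$ with $\bbp$ itself. Since $U$ generates, as an algebraic group, exactly $R_u(\bbp)$ (the $\bbf_q$-points are Zariski-dense in the connected unipotent group $R_u(\bbp)$ because $\bbf_q$ is the field of definition and $R_u(\bbp)$ is split unipotent, being a product of root groups $\bbu_\beta$), the parabolic associated by Borel--Tits to $U$ must coincide with $\bbp$. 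This forces $N_{\bbg(F)}(U) \subseteq \bbp(F)$, giving (i).

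For part (ii), the plan is to analyze an element $x \in N_{\bbg(F)}(U) \cap \bbl(\o)$. By part (i) such $x$ lies in $\bbp(F)$, and since $x \in \bbl(\o)$ it lies in the Levi $\bbl$. The normalization condition says $\Ad(x)$ permutes (indeed stabilizes) $U$. First I would reduce modulo $\pi$: the reduction $\xi_1(x) \in \bbl(\bbf_q)$ normalizes $U = R_u(\bbp)(\bbf_q)$. The action of $\bbl$ on $R_u(\bbp)$ is by the adjoint representation, and because $R_u(\bbp)$ is abelian, this linearizes to the $\bbl$-action on the root spaces $\hfr_\beta$ for $\beta \in \Psi$ (the roots occurring in $R_u(\bbp)$). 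Here I would invoke the mechanism of lemma~\ref{centLevi}: an element of $\bbl(\bbf_q)$ that \emph{centralizes} $U$ must be central in $\bbg$. For the normalizer statement, the extra content beyond lemma~\ref{centLevi} is to upgrade ``normalizes the finite group $U$'' to strong enough control that $x$ reduces into $\bbl(\bbf_q)$ exactly, and then to show the $\pi$-adic tail vanishes.

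The main obstacle I anticipate is the second, $\pi$-adic step: showing that an element $x \in \bbl(\o)$ normalizing the \emph{finite} group $U = R_u(\bbp)(\bbf_q)$ must actually lie in $\bbl(\bbf_q)$, i.e. that its higher-order congruence coordinates are forced to be trivial. The difficulty is that $U$ is finite, so naively an element of $\bbl(\o)$ congruent to $1$ modulo $\pi$ could conjugate $U$ into some other $\bbf_q$-rational subgroup that happens to coincide with $U$ only after reduction. I would control this by exploiting the linearized action on $\prod_{\beta \in \Psi}\bbu_\beta \cong \bba^{|\Psi|}$: writing the action of $x$ on the coordinate functions $T_\beta$ as a matrix $(n_{\beta\theta})$ over $\o$ (as in the proof of lemma~\ref{centLevi}), the requirement that this matrix send the specific finite lattice $U = \bbf_q^{|\Psi|}$ to itself, together with $\bbf_q$-rationality of $U$, forces the entries $n_{\beta\theta}$ to lie in $\bbf_q$ and the off-diagonal structure to be that of a permutation compatible with the root combinatorics; combined with the semisimplicity argument of lemma~\ref{centLevi} (none of the simple factors of $\bbl/Z(\bbl)$ act trivially on $\bbu$), this pins $x$ down to $\bbl(\bbf_q)$. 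The reverse inclusion $\bbl(\bbf_q) \subseteq N_{\bbg(F)}(U)$ is immediate since $\bbl(\bbf_q)$ normalizes $R_u(\bbp)(\bbf_q)$ over $\bbf_q$, completing the equality in (ii).
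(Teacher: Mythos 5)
Your part (i) contains a genuine gap. The whole argument hinges on the claim that $U=R_u(\bbp)(\bbf_q)$ is Zariski-dense in $R_u(\bbp)$ (equivalently, generates it as an algebraic group), so that the Borel--Tits parabolic attached to $U$ must coincide with $\bbp$. This claim is false: $\bbf_q$ is finite, so $R_u(\bbp)(\bbf_q)\cong\bbf_q^{|\Psi|}$ is a finite, hence Zariski-closed, subset of the positive-dimensional variety $R_u(\bbp)\cong\bba^{|\Psi|}$, and its Zariski closure is itself, not $R_u(\bbp)$. (Density of rational points of a split unipotent group requires the base field to be infinite; over $\bbf_q$ the points of $\bbg_a$ are cut out by $x^q-x$.) Consequently Theorem~\ref{unipotent} only produces \emph{some} $F$-parabolic $\bbq$ with $U\subseteq R_u(\bbq)$ and $N_{\bbg(F)}(U)\subseteq \bbq(F)$ --- note that $U$ also lies in $R_u(\bbb)$, for instance --- and you have no valid argument identifying $\bbq$ with $\bbp$, which is exactly the content of (i). The paper proves (i) by a different and direct route: if $x\in N_{\bbg(F)}(R_u(\bbp)(\bbf_q))$ were not in $\bbp(F)$, the Bruhat decomposition relative to $\bbp$ lets one write $x=pr_{\alpha}u$ with $p\in\bbp(F)$ and $u\in R_u(\bbp)(F)$; since $R_u(\bbp)$ is abelian and normalized by $\bbp$, normalizing $R_u(\bbp)(\bbf_q)$ would force $r_{\alpha}R_u(\bbp)(\bbf_q)r_{\alpha}^{-1}\subseteq R_u(\bbp)(F)$, contradicting $r_{\alpha}\bbu_{\alpha}(\bbf_q)r_{\alpha}^{-1}\subseteq\bbu_{-\alpha}$.

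Your part (ii) is essentially workable and close in spirit to the paper, though more elaborate than necessary. The paper's argument: after multiplying $h$ by an element of $\bbl(\bbf_q)$ one may assume $h$ lies in the first congruence subgroup $L_1$; then $huh^{-1}\equiv u\pmod{\pi}$ together with $huh^{-1}\in R_u(\bbp)(\bbf_q)$ and injectivity of reduction on $R_u(\bbp)(\bbf_q)$ forces $h$ to \emph{centralize} $R_u(\bbp)(\bbf_q)$, whence lemma~\ref{centLevi} places $h$ in the center of $\bbg$, and the center meets $L_1$ trivially. Your observation that the matrix $(n_{\beta\theta})$ must have columns in $\bbf_q^{|\Psi|}$ (because the standard basis vectors lie in $U$) correctly forces the image of $x$ in the linearized action to be $\bbf_q$-rational, but the concluding step ``this pins $x$ down to $\bbl(\bbf_q)$'' still requires the kernel computation of lemma~\ref{centLevi} together with the triviality of $Z(\bbg)\cap L_1$, and as written it is only a sketch; the spurious ``permutation'' structure is not needed. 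In any case, the substantive problem is the failure of your argument for (i).
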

\begin{proof}

\textit{(i)} Let $x\in N=N_{\bbg(F)}(R_u(\bbp)(\bbf_q))$. If $x$
is not in $\bbp(F)$, then by Bruhat decomposition,
$x=pr_{\alpha}u$ for some $p\in\bbp(F)$ and $u\in R_u(\bbp)(F)$,
where $r_{\alpha}$ is a representative of $\alpha$ in
$N_{\bbg}(\bbt)(\bbf_p)$. Therefore, since $R_u(\bbp)$ is abelian
and normalized by $\bbp$, we have:
$$r_{\alpha}R_u(\bbp)(\bbf_q)r_{\alpha}^{-1}\subseteq
R_u(\bbp)(F).$$ However it is impossible as
$r_{\alpha}\bbu_{\alpha}(\bbf_q)r_{\alpha}^{-1}\subseteq
\bbu_{-\alpha}$.

\textit{(ii)} It is clear that $\bbl(\bbf_q)$ is a subset of
$N\cap\bbl(\o)$, so we just need to prove the other side. Let
$h\in N\cap\bbl(\o)$. After multiplying $h$ by an element of
$\bbl(\bbf_q)$, we may and shall assume that $h$ is in the first
congruence subgroup $L_1$ of $\bbl(\o)$. It is easy to see that if
an element of the first congruence subgroup normalizes
$R_u(\bbp)(\bbf_q)$, then it should centralize
$R_u(\bbp)(\bbf_q)$, too. Now by lemma~\ref{centLevi}, we can get
that $h=1$, which completes the proof of the lemma.
\end{proof}

\noindent
 As a corollary of the above lemma, we can see that:
 $$N_{\bbg(k_{\nu})}(R_u(\bbp)(\bbf_q))\cap\bbl(\o)R_u(\bbp)(k_{\nu})=\bbl(\bbf_q)R_u(\bbp)(k_{\nu}).$$
Therefore we have :
$$\begin{array}{ll}
\Gamma\cap\theta(\bbg(\o))&=\Gamma\cap\theta(\bbg(\o))\cap\bbp(k_{\nu})=\Gamma\cap\theta(\bbp(\o))\\ &\\
&=\Gamma\cap\bbl(\o)R_u(\bbp)(\pi^{-1}\o)\\ & \\
&\subseteq \Gamma\cap\bbl(\bbf_q)R_u(\bbp)(\pi^{-1}\o),\hspace{.2in}\mbox{ using lemma~\ref{normalizer}}\\ & \\
&=\bbl(\bbf_q)(\Gamma\cap
R_u(\bbp)(\pi^{-1}\o)),\hspace{.2in}\mbox{since}\hspace{.1in}\bbl(\bbf_q)\subseteq
\Gamma,
\end{array}$$

\noindent where $\pi$ is a uniformizing element of $\o_{\nu}$.
Now, if $u\in \Gamma\cap R_u(\bbp)(\pi^{-1}\o)$, then since
$\Gamma$ normalizes $\bbg(\o_k(\nu))$, for any $t\in
\bbt(\bbf_q)$, $utu^{-1}$ is in $\bbg(\o_k(\nu))$. Hence
$t^{-1}utu^{-1}$ is an element of $R_u(\bbp)(\o_k(\nu))\cap
R_u(\bbp)(\pi^{-1}\o)$. But , by Weierstrass' gap theorem,
$\o_k(\nu)\cap \pi^{-1}\o_{\nu}=\bbf_q$, and so $t^{-1}utu^{-1}$
is in $R_u(\bbp)(\bbf_q)$, so is $u$ when $q>3$. Hence we
have:
$$\Gamma\cap\theta(\bbg(\o))=\bbp(\bbf_q).$$
Therefore by the virtue of the lemma~\ref{index-vol}, we can
conclude that the covolume of $\Gamma$ is bigger than 2, which is
a contradiction.

Hence $k/\bbf_q$ is a genus zero global function field, which has
a place of degree one. Therefore it is isomorphic to the rational
function field $\bbf_q(t)$~\cite[chapter XVI,4]{A}, as we wished.

\section{The final step of the proof of theorem C.}
Let $k=\bbf_q(t)$. Automorphism group of $k$ acts transitively on
the set of rational places, and such automorphism gives rise to
continuous isomorphism between completions of $k$. Hence we may
assume that $\nu$ is the ``infinite" place i.e. $k_{\nu}=
\bbf_q((t^{-1}))$. Let
$$T^+=\{a\in\bbt(k_{\nu})| |\alpha_i(a)|=t^{-n_i}, 0\leq n_i \hspace{.1in}\mbox{for any}\hspace{.1in} 1\leq i\leq r\},$$
where $\Delta=\{\alpha_1,\cdots,\alpha_r\}$ is the set of positive
roots. Using reduction theory from the works of Harder~\cite{Ha1}
or Springer~\cite{Sp}, A.~Prasad~\cite{P} explicitly investigated
the case of Chevalley groups over $\bbf_q(t)$, and he has shown
the following decomposition:
\begin{equation}\label{reduction}\bbg(k_{\nu})=\bbg(\o_{\nu})\cdot T^+\cdot\bbg(\o_k(\nu)).\end{equation}
In fact, he has shown that $T^+$ is a fundamental domain of
$\bbg(\o_k(\nu))=\bbg(\bbf_q[t])$ in $\bbg(k_{\nu})/\bbg(\o_{\nu})$. Now we finish
the proof of theorem A, by establishing the following lemma:
\begin{lem}~\label{normlattice}
Let $\bbg$ be a simply-connected Chevalley group, and $q>3$; then
$$N_{\bbg(\bbf_q((t^{-1})))}(\bbg(\bbf_q[t]))=\bbg(\bbf_q[t]).$$
\end{lem}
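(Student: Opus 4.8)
The plan is to show that the normalizer of $\bbg(\bbf_q[t])$ in $G=\bbg(\bbf_q((t^{-1})))$ is contained in $\bbg(\bbf_q[t])$, the reverse containment being clear. I would exploit the reduction-theoretic decomposition~(\ref{reduction}), which says $\bbg(k_{\nu})=\bbg(\o_{\nu})\cdot T^+\cdot\bbg(\bbf_q[t])$ and moreover that $T^+$ is a fundamental domain for the right action of $\bbg(\bbf_q[t])$ on $\bbg(k_{\nu})/\bbg(\o_{\nu})$. Since $\bbg(\bbf_q[t])$ normalizes itself, any $g\in N_{G}(\bbg(\bbf_q[t]))$ can be multiplied on the right by a suitable element of $\bbg(\bbf_q[t])$ so that its class in $\bbg(k_{\nu})/\bbg(\o_{\nu})$ lands in $T^+$; concretely, write $g=g_0\cdot a\cdot\gamma$ with $g_0\in\bbg(\o_{\nu})$, $a\in T^+$, $\gamma\in\bbg(\bbf_q[t])$, and replace $g$ by $g\gamma^{-1}=g_0 a$, which still normalizes $\bbg(\bbf_q[t])$. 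So it suffices to analyze elements of the form $g_0 a$ with $g_0\in\bbg(\o_\nu)$ and $a\in T^+$ that normalize $\bbg(\bbf_q[t])$.

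The key idea is to test the normalizing element against the unipotent root subgroups $\bbu_\beta$. For a positive root $\beta$ and a polynomial $P(t)\in\bbf_q[t]$, the element $u_\beta(P(t))$ lies in $\bbg(\bbf_q[t])$, and I want to control how conjugation by $a\in T^+$ scales it: since $a$ acts on $\bbu_\beta$ by the character $\beta(a)$, and $\beta=\sum_i c_i\alpha_i$ with $c_i\ge 0$ for positive $\beta$, we get $|\beta(a)|=t^{-\sum_i c_i n_i}\le 1$, i.e. conjugation by $a$ does not increase $t$-degrees on positive root groups but strictly decreases them when some relevant $n_i>0$ and $c_i>0$. The plan is then to feed in elements of $\bbg(\bbf_q[t])$ of large $t$-degree in the unipotent directions and demand that their images under conjugation by $g_0 a$ remain in $\bbg(\bbf_q[t])$: the factor $g_0\in\bbg(\o_\nu)$ can only produce bounded denominators, while the scaling by $a$ on the negative root groups goes the wrong way and forces the $n_i$ to vanish, so that $a$ itself lies in $\bbt(\bbf_q)\cdot Z$, up to the center. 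Once $a$ is forced to be essentially trivial, the remaining element $g_0\in\bbg(\o_\nu)$ must normalize $\bbg(\bbf_q[t])$, and comparing $\bbg(\bbf_q[t])\cap\bbg(\o_\nu)=\bbg(\bbf_q)$ with its conjugate pins $g_0$ down to $\bbg(\bbf_q)\subseteq\bbg(\bbf_q[t])$.

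The hypothesis $q>3$ enters exactly where we need enough field elements to separate characters of the torus and to run the kind of polynomial-nonvanishing argument encoded in lemma~\ref{poly} and lemma~\ref{root}; with too few elements one cannot distinguish the scalings coming from distinct roots, and the degree-counting that forces $n_i=0$ breaks down. The main obstacle I anticipate is the bookkeeping at the boundary between the integral part $g_0$ and the $T^+$-part $a$: one must make the conjugation estimates on root subgroups precise enough to conclude that no nontrivial $t$-power can survive in $a$, while simultaneously handling the center of $\bbg$, which is where elements of $T^+$ can act nontrivially yet still preserve the lattice. Carefully choosing test elements $u_\beta(P)$ of controlled degree — in particular using the highest root and the simple roots to saturate all torus directions — should resolve this, yielding $a\in Z(\bbg)(\bbf_q)\cdot\bbt(\bbf_q)$ and hence $g_0 a\in\bbg(\bbf_q[t])$, which completes the proof.
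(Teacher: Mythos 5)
Your skeleton is the paper's: decompose $g=xa\gamma$ via (\ref{reduction}), reduce to $xa$ with $x\in\bbg(\o_{\nu})$ and $a\in T^+$, kill the $T^+$-part by conjugating root elements, then handle the integral part. But two points keep this from being a proof. On the $T^+$-part, the clean argument does not need degree counting, negative root groups, or any hypothesis on $q$: if $a\neq 1$ then $|\alpha(a)|<1$ for some simple root $\alpha$, so $au_{\alpha}(1)a^{-1}=u_{\alpha}(\alpha(a))$ lies in the first congruence subgroup $G_1$, hence so does $xau_{\alpha}(1)a^{-1}x^{-1}$ (as $G_1$ is normal in $G_0$); but this element must lie in $\Lambda=\bbg(\bbf_q[t])$, and $\Lambda\cap G_1=1$, a contradiction. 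Your version is shakier: scaling a polynomial by $\alpha(a)^{-1}$ on a negative root group can perfectly well return a polynomial, and ``bounded denominators from $g_0$'' is not precise enough to force the $n_i$ to vanish.

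The genuine gap is the last step. You assert that $g_0\in\bbg(\o_{\nu})$ normalizing $\Lambda$, hence normalizing $\Lambda\cap\bbg(\o_{\nu})=\bbg(\bbf_q)$, is thereby ``pinned down'' to $\bbg(\bbf_q)$. Normalizing $\bbg(\bbf_q)$ does not put an element of $\bbg(\o_{\nu})$ inside $\bbg(\bbf_q)$; an argument is required. The paper adjusts $g_0$ by an element of $\bbg(\bbf_q)\subseteq\Lambda$ so that $g_0\in G_1$, notes that such a $g_0$ must \emph{centralize} $\bbg(\bbf_q)$ (the commutators land in $G_1\cap\bbg(\bbf_q)=1$), and then proves that $C_{\bbg}(\bbg(\bbf_q))$ is central in $\bbg$ --- this is exactly where $q>3$ enters, via $C_{\bbg}(\bbt(\bbf_q))=\bbt$ and the action $au_{\alpha}(1)a^{-1}=u_{\alpha}(\alpha(a))$ --- and finally uses $Z(\bbg)\cap G_1=1$ to conclude $g_0=1$. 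Your proposal attributes $q>3$ to the step forcing $n_i=0$ (where it is not needed) and omits the centralizer computation where it actually is needed.
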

\begin{proof}
Let $g\in\Gamma=N_{\bbg(\bbf_q((t^{-1})))}(\bbg(\bbf_q[t]))$. By
equation (\ref{reduction}), there are $x, a,$ and $\gamma$ in
$\bbg(\bbf_q[[t^{-1}]])$, $T^+$ ,and $\Lambda=\bbg(\bbf_q[t])$,
respectively, such that $g=xa\gamma$. Thus $xa\in\Gamma$. If
$a\neq 1$ then for some $\alpha\in \Delta$, $|\alpha(a)|<1$. Thus
$xau_{\alpha}(1)a^{-1}x^{-1}\in \Lambda\cap G_1=1$, which is a
contradiction. Hence $x\Lambda x^{-1}=\Lambda$. As $\bbg(\bbf_q)$
is a subgroup of $\Lambda$, we may assume that $x$ is an element
of the first congruence subgroup, $G_1$. Since intersection of
$\Lambda$ with $G_0=\bbg(\bbf_q([[t^{-1}]])$ is $\bbg(\bbf_q)$ and
$x$ normalizes $G_0$, $x$ should also normalizes $\bbg(\bbf_q)$,
and so it should commute with all elements of $\bbg(\bbf_q)$ since
$x\in G_1$. However we claim that $C_{\bbg}(\bbg(\bbf_q))$ is
central. Since $\bbt(\bbf_q)$ consists of semisimple elements,
$\Lie(C_{\bbg}(\bbt(\bbf_q)))=C_{\Lie\bbg}(\bbt(\bbf_q))$. So for
$q>3$, $C_{\bbg}(\bbt(\bbf_q))=\bbt$, and we can get that
$C_{\bbg}(\bbg(\bbf_q))$ is central, using
$au_{\alpha}(1)a^{-1}=u_{\alpha}(\alpha(a))$. But the first
congruence subgroup intersects the center of $\bbg$ trivially,
which completes the proof of the lemma.
\end{proof}



\noindent
{Dept. of Math., Princeton University, Fine hall, Washington road, Princeton, NJ, 08544, $\&$
IAS, 1 Einstein drive, Princeton, NJ, 08540.}
\\

\noindent
{\em E-mail address:} {\tt asalehi@math.princeton.edu}
\end{document}